\numberwithin{equation}{section}
\title[PINNs for Navier-Stokes equations]{Error estimates for physics-informed neural networks approximating the Navier-Stokes equations}
\author{Tim~De Ryck}
\email{tim.deryck@sam.math.ethz.ch}
\author{Ameya D.~Jagtap}
\email{ameya\_jagtap@brown.edu}
\author{Siddhartha~Mishra}
\email{siddhartha.mishra@sam.math.ethz.ch}
\address[T. De Ryck]{Seminar for Applied Mathematics, D-MATH, ETH Z\"urich, Rämistrasse 101, 8092 Zürich, Switzerland. }
\address[S. Mishra]{Seminar for Applied Mathematics, D- MATH, and ETH AI Center, ETH Z\"urich, Rämistrasse 101, 8092 Zürich, Switzerland. }
\address[A.D. Jagtap]{Division of Applied Mathematics, Brown University, 182 George street, Providence, RI 02912, USA.}
\renewcommand{\S}{\mathcal{S}} %training set
\newcommand{\Et}{\mathcal{E}_T} %training error
\newcommand{\Eg}{\mathcal{E}_G} %generalization error
\newcommand{\Etot}{\mathcal{E}}
\newcommand{\hu}{\hat{u}}
\newcommand{\N}{\mathbb{N}}
\newcommand{\R}{\mathbb{R}}
\newcommand{\bigO}{\mathcal{O}}
\newcommand{\fhat}{\hat{f}}
\renewcommand{\div}[1]{\mathrm{div}\left( #1 \right)}
\newcommand{\ck}[1]{\norm{#1}_{W^{k,\infty}}}
\newcommand{\cki}[1]{\norm{#1}_{W^{k,\infty}(I_i^N)}}
\newcommand{\ckunit}[1]{\norm{#1}_{W^{k,\infty}(\Omega)}}
\newcommand{\hki}[1]{\norm{#1}_{H^k(I_i^N)}}
\newcommand{\hkunit}[1]{\norm{#1}_{H^k(\Omega)}}
\newcommand{\ltwo}[1]{\norm{#1}_{L^2(\Omega)}}
\newcommand{\htwo}[1]{\norm{#1}_{H^2(\Omega)}}
\newcommand{\uh}{\hat{u}^N}
\newcommand{\hp}{\hat{p}}
\newcommand{\pde}{\mathrm{PDE}}
\newcommand{\divv}{\mathrm{div}}
\newcommand{\inte}{\mathrm{int}}
\newcommand{\rpde}{\mathcal{R}_{\mathrm{PDE}}}
\newcommand{\rdiv}{\mathcal{R}_{\mathrm{div}}}
\newcommand{\rs}{\mathcal{R}_{s}}
\newcommand{\rsu}{\mathcal{R}_{s,u}}
\newcommand{\rsp}{\mathcal{R}_{s,p}}
\newcommand{\rsgu}{\mathcal{R}_{s,\nabla u}}
\newcommand{\rt}{\mathcal{R}_{t}}
\newcommand{\ru}{\mathcal{R}_{u}}
\newcommand{\rgraduab}{\mathcal{R}_{\nabla u}}
\newcommand{\rp}{\mathcal{R}_{p}}
\newcommand{\EL}{\mathcal{D}}
\newcommand{\hn}{\hat{n}}
\newcommand{\qu}[1]{\mathcal{Q}_{#1}}
\newtheorem{theorem}{Theorem}[section]
\newtheorem{remark}[theorem]{Remark}
\newtheorem{definition}[theorem]{Definition}
\newtheorem{lemma}[theorem]{Lemma}
\newtheorem{corollary}[theorem]{Corollary}
\begin{document}
\maketitle

\begin{abstract}
We prove rigorous bounds on the errors resulting from the approximation of the incompressible Navier-Stokes equations with (extended) physics-informed neural networks. We show that the underlying PDE residual can be made arbitrarily small for tanh neural networks with two hidden layers. Moreover, the total error can be estimated in terms of the training error, network size and number of quadrature points. The theory is illustrated with numerical experiments.   
\end{abstract}

%\tableofcontents
\section{Introduction}
Deep learning has been very successfully deployed in a variety of fields including computer vision, natural language processing, game intelligence, robotics, augmented reality and autonomous systems \citep{DLnat} and references therein. In recent years, deep learning is being increasingly used in various contexts in scientific computing such as protein folding and controlled nuclear fusion. 

As deep neural networks are universal function approximators, it is also natural to use them as \emph{ansatz spaces} for the solutions of (partial) differential equations (PDEs). In fact, the literature on the use of deep learning for numerical approximation of PDEs has witnessed exponential growth in the last 2-3 years. Prominent examples for the use of deep learning in PDEs include the deep neural network approximation of high-dimensional semi-linear parabolic PDEs \citep{HEJ1}, linear elliptic PDEs \citep{SZ1,Kuty} and nonlinear hyperbolic PDEs \citep{LMR1,LMPR1} and references therein. More recently, DNN-inspired architectures such as DeepONets \citep{ChenChen,DeepOnet,LMK1} and Fourier neural operators \citep{FNO,KLM1} have been shown to even learn infinite-dimensional \emph{operators}, associated with underlying PDEs, efficiently.

Another extremely popular avenue for the use of machine learning in numerical approximation of PDEs is in the area of \emph{physics-informed neural networks} (PINNs). First proposed in slightly different forms in the 90s \citep{DPT,Lag1,Lag2}, PINNs were resurrected recently in \citep{KAR1,KAR2} as a practical and computationally efficient paradigm for solving both forward and inverse problems for PDEs. Since then, there has been an explosive growth in designing and applying PINNs for a variety of applications involving PDEs. A very incomplete list of references includes \citep{KAR4,KAR6,KAR7,KAR8,jagtap2020extended,jag2,jagtap2022physics, jin2021nsfnets,MM1,MM2,MM3,BKMM1, shukla2021physics, jagtap2022deep, hu2021extended, shukla2021parallel} and references therein.  

On the other hand and in stark contrast to the widespread applications of PINNs, there has been a pronounced scarcity of papers that rigorously justify why PINNs work. Notable exceptions include \citep{shin2020convergence} where the authors show consistency of PINNs with the underlying linear elliptic and parabolic PDE under stringent assumptions and in \citep{Zhang1} where similar estimates are derived for linear advection equations. In \citep{MM1,MM2}, the authors proposed a strategy for deriving error estimates for PINNs. To describe this strategy and highlight the underlying theoretical issues, it is imperative to introduce PINNs and we do so in an informal manner here (see section \ref{sec:preliminaries} for the formal definitions). To this end, we consider the following very general form of an abstract PDE,
\begin{equation}
\label{eq:PDE}
\begin{aligned}
    \EL[u](x,t) &= 0, \quad \mathcal{B}u(y,t) = \psi(y,t), \\
    u(x,0) &= \varphi(x), \quad \text{for } x\in D, y\in \partial D, t\in [0,T].
    \end{aligned}
\end{equation}
Here, $D\subset \mathbb{R}^d$ is compact and $\EL,\mathcal{B}$ are the differential and boundary operators, $u:D\times [0,T]\to\mathbb{R}^m$ is the solution of the PDE, $\psi:\partial D\times [0,T]\to\mathbb{R}^m$ specifies the (spatial) boundary condition and $\varphi:D\to\mathbb{R}^m$ is the initial condition.

We seek deep neural networks $u_{\theta}:D\times [0,T]\to\mathbb{R}^m$ (see \eqref{eq:dnn} for a definition), parameterized by $\theta \in \Theta$, constituting the weights and biases, that approximate the solution $u$ of \eqref{eq:PDE}. The key idea behind PINNs is to consider pointwise \emph{residuals}, defined for any sufficiently smooth function $f:D\times [0,T]\to\mathbb{R}^m$ as, 
\begin{equation}
\begin{aligned}
    \mathcal{R}_i[f](x,t) &= \EL[f](x,t), \quad   \mathcal{R}_s[f](y,t) = \mathcal{B}f(y,t) - \psi(y,t), \\ 
    \mathcal{R}_t[f](x) &= f(x,0) - \varphi(x), \quad x\in D, y\in \partial D, t\in [0,T]
    \end{aligned}
\end{equation}
for $x\in D$, $y\in \partial D$, $t\in [0,T]$. Using these residuals, one measures how well a function $f$ satisfies resp. the PDE, the boundary condition and the initial condition of \eqref{eq:PDE}. Note that for the exact solution $ \mathcal{R}_i[u]=\mathcal{R}_s[u]=\mathcal{R}_t[u]=0$. 

Hence, within the PINNs algorithm, one seeks to find a neural network $u_\theta$, for which all residuals are simultaneously minimized, e.g. by minimizing the quantity,
\begin{equation}
\label{eq:def-generalization-error}
\begin{aligned}
    \Eg(\theta)^2 &= \int_{D\times[0,T]} \abs{\mathcal{R}_i[u_\theta](x,t)}^2 dxdt +  \int_{\partial D\times[0,T]} \abs{\mathcal{R}_s[u_\theta](x,t)}^2 ds(x)dt \\
    &+  \int_{D} \abs{\mathcal{R}_t[u_\theta](x)}^2 dx. 
    \end{aligned}
\end{equation}However, the quantity $\Eg(\theta)$, often referred to as the \emph{population risk} or  \textit{generalization error} \citep{MM1} of the neural network $u_\theta$ involves integrals and can therefore not be directly minimized in practice. Instead, the integrals in \eqref{eq:def-generalization-error} are approximated by a suitable numerical quadrature (see section \ref{sec:quad} for details), resulting in,
\begin{equation}
\label{eq:et}
\begin{aligned}
    \Et^i(\theta,\S_i)^2 &= \sum_{n=1}^{N_i} w^n_i \abs{\mathcal{R}_i[u_\theta](x^n_i,t^n_i)}^2, \\
      \Et^s(\theta,\S_s)^2 &= \sum_{n=1}^{N_s} w^n_s \abs{\mathcal{R}_s[u_\theta](x^n_s,t^n_s)}^2, \quad 
      \Et^t(\theta,\S_t)^2 &= \sum_{n=1}^{N_t} w^n_t \abs{\mathcal{R}_t[u_\theta](x^t_i)}^2, \\
      \Et(\theta,\S)^2 &= \Et^i(\theta,\S_i)^2 + \Et^s(\theta,\S_s)^2+ \Et^t(\theta,\S_t)^2, 
\end{aligned}
\end{equation}
with quadrature points in space-time constituting data sets $\S_i = \{(x^n_i, t^n_i)\}_{n=1}^{N_i}$, $\S_s = \{(x^n_s, t^n_s)\}_{n=1}^{N_s}$ and $\S_t = \{x^n_t\}_{n=1}^{N_t}$, and $w^n_q$ are suitable quadrature weights for $q=i,t,s$. 

Thus, the underlying essence of PINNs is to minimize the training error $ \Et(\theta,\S)^2$ over the neural network parameters $\theta$. This procedure immediately raises the following key theoretical questions (see also \citep{DRM1}) starting with 
\begin{itemize}
    \item [[Q1.]] Given a tolerance $\varepsilon > 0$, do there exist neural networks $\hat{u} = u_{\hat{\theta}}$, $\widetilde{u} = u_{\widetilde{\theta}}$, parametrized by $\hat{\theta}, \widetilde{\theta} \in \Theta$ such that the corresponding generalization $\Eg(\hat{\theta})$ \eqref{eq:def-generalization-error} and training $\Et(\widetilde{\theta},\S_s)$\eqref{eq:et} errors are small i.e., $\Eg(\hat{\theta}),\Et(\widetilde{\theta},\S_s) < \varepsilon$?
\end{itemize}    
As the aim in the PINNs algorithm is to minimize the training error (and indirectly the generalization error), an affirmative answer to this question is of vital importance as it ensures that the loss (PDE residual) being minimized can be made small. However, minimizing the PDE residual does not necessarily imply that the overall error (difference between the exact solution of the PDE \eqref{eq:PDE} and its PINN approximation) is small. This leads to the second key question, 
\begin{itemize}
    \item [[Q2.]] Given a PINN $\hat{u}$ with small generalization error, is the corresponding \emph{total error} $\|u-\hat{u}\|$ small, i.e., is $\|u-\hat{u}\|< \delta (\varepsilon)$, for some $\delta(\varepsilon) \sim {\mathcal O}(\varepsilon)$, for some suitable norm $\|\cdot\|$, and with $u$ being the solution of the PDE \eqref{eq:PDE}?
    
\end{itemize}
An affirmative answer to Q2 (and Q1) certifies that, \emph{in principle}, there exists a (physics-informed) neural network, corresponding to the parameter $\hat{\theta}$, such that the PDE residual and consequently, the overall error in approximating the solution of the PDE \eqref{eq:PDE}, are small. However, in practice, we minimize the training error $\Et$ \eqref{eq:et} and this leads to another key question, 
\begin{itemize}
    \item [[Q3.]] Given a small training error $\Et(\theta^{\ast})$ and a sufficiently large training set $\S$, is the corresponding generalization error $\Eg(\theta^{\ast})$ also proportionately small?
\end{itemize}
An affirmative answer to question Q3, together with question Q2, will imply that the trained PINN $u_{\theta^*}$ is an accurate approximation of the solution $u$ of the underlying PDE \eqref{eq:PDE}. 
Thus, answering the above three questions affirmatively will constitute a comprehensive theoretical investigation of PINNs and provide a rationale for their very successful empirical performance. 

Given this context, we examine how far the literature has come in answering these key questions on the theory for PINNs. In \citep{MM1,MM2}, the authors leverage the \emph{stability} of solutions of the underlying PDE \eqref{eq:PDE} to bound the total error in terms of the generalization error (question Q2). Similarly, they use the accuracy of quadrature rules to bound the generalization error in terms of the training error (question Q3). This approach is implemented for forward problems corresponding to a variety of PDEs such as the semi-linear and quasi-linear parabolic equations and the incompressible Euler and the Navier-Stokes equations \citep{MM1}, radiative transfer equations \citep{MM3}, nonlinear dispersive PDEs such as the KdV equations \citep{BKMM1} and for the unique continuation (data assimilation) inverse problem for many linear elliptic, parabolic and hyperbolic PDEs \citep{MM2}. However, Q1 was not answered in these papers. Moreover, the authors imposed rather stringent assumptions on the weights and biases of the trained PINN, which may not hold in practice. 

In \citep{DRM1}, the authors answered the key questions Q1, Q2 and Q3 in the case of a large class of \emph{linear parabolic PDEs}, namely the Kolmogorov PDEs, which include the heat equation and the Black-Scholes equation of option pricing as special examples. Thus, they provided a rigorous and comprehensive error analysis of PINNs for these PDEs. Moreover, they also showed that PINNs overcome the \emph{curse of dimensionality} in the context of very high-dimensional Kolmogorov equations. 

The authors of \citep{DRM1} utilized the linearity of the underlying Kolmogorov heavily in their analysis. It is natural to ask if analogous error estimates can be shown for PINN approximations of nonlinear PDEs. This consideration sets the stage for the current paper where we carry out a thorough error analysis for PINNs approximating a prototypical nonlinear PDE and answer Q1, Q2 and Q3 affirmatively. The nonlinear PDE that we consider is the incompressible Navier-Stokes equation, which is the fundamental mathematical model governing the flow of incompressible Newtonian fluids \citep{temam2001navier}. 

We are going to show the following results on the PINN approximation of the incompressible Navier-Stokes equations,
\begin{itemize}
    \item We show that there exist neural networks that approximate the classical solutions of Navier-Stokes equations such that the PINN generalization error \eqref{eq:def-generalization-error} and the PINN training error \eqref{eq:et} can be made arbitrarily small. Moreover, we provide explicit bounds on the number of neurons as well as the weights of the network in terms of error tolerance and Sobolev norms of the underlying Navier-Stokes equations. This analysis is also extended for the XPINN approximation \citep{jagtap2020extended} of the Navier-Stokes equations, answering Q1 affirmatively for both PINNs and XPINNs. 
    \item We bound the total error of the PINN (and XPINN) approximation of the Navier-Stokes equations in terms of the PDE residual (generalization error \eqref{eq:def-generalization-error}). Consequently, a small PDE residual implies a small total error, answering Q2 affirmatively. \item We bound the generalization error \eqref{eq:def-generalization-error} in terms of the training error \eqref{eq:et} and the number of quadrature points using a midpoint quadrature rule. This affirmatively answers question Q3 and establishes the fact that a small training error and sufficient number of quadrature points suffice to yield a small total error for the PINN (and XPINN) approximation of the Navier-Stokes equations, under a mild assumption on the growth of the network weights (see discussion in Section \ref{sec:33}).  
    \item We present numerical experiments to illustrate our theoretical results. 
    
\end{itemize}

The rest of our paper is organized as follows: In section \ref{sec:preliminaries}, we collect preliminary information on the Navier-Stokes equations and neural networks and present the PINN and XPINN algorithms. The error analysis is carried out in section \ref{sec:3} and numerical experiments are presented in section \ref{sec:4}. 

\section{Preliminaries}\label{sec:preliminaries}
In this section, we collect preliminary information on concepts used in rest of the paper. We start with the form of the Navier-Stokes equations. 
\subsection{The incompressible Navier-Stokes equations}
\label{sec:NS}
We consider the well-known incompressible Navier-Stokes equations \citep{temam2001navier} and references therein,
\begin{equation}\label{eq:navier-stokes}
   \begin{cases} u_t + u\cdot \nabla u + \nabla p = \nu
  \Delta u  &\text{in } D \times [0,T] ,\\
  \div{u} = 0 &\text{in } D \times [0,T],\\
  u(t=0) = u_0 &\text{in }  D.
  \end{cases}
\end{equation}
Here, $u:D\times [0,T] \to \mathbb{R}^d$ is the fluid velocity, $p:D\to\mathbb{R}$ is the pressure and $u_0:D \to \mathbb{R}^d$ is the initial fluid velocity. The viscosity is denoted by $\nu\geq 0$. 
For the rest of the paper, we consider the Navier-Stokes equations \eqref{eq:navier-stokes} on the $d$-dimensional torus $D = \mathbb{T}^d = [0,1)^d$ with periodic boundary conditions. 

The existence and regularity of the solution to  \eqref{eq:navier-stokes} depends on the regularity of $u_0$, as is stated by the following well-known theorem \cite[Theorem 3.4]{majda2002vorticity}. Other regularity results with different boundary conditions can be found in e.g. \citep{temam2001navier}.

\begin{theorem}\label{thm:NS-existence}
If $u_0\in H^r(\mathbb{T}^d)$ with $r>\frac{d}{2}+2$ and $\div{u_0}=0$, then there exist $T>0$ and a classical solution $u$ to the Navier-Stokes equation such that $u(t=0)=u_0$ and $u\in C([0,T];H^r(\mathbb{T}^d))\cap C^1([0,T];H^{r-2}(\mathbb{T}^d))$.
\end{theorem}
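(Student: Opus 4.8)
\emph{Proof idea.} This is the classical local-in-time well-posedness theorem for incompressible flow, and the plan is to follow the by-now standard mollification argument (see \citep{majda2002vorticity}). The first step is to eliminate the pressure: applying the Leray--Helmholtz projection $\mathbb{P}$ onto mean-zero divergence-free fields --- on $\mathbb{T}^d$ the Fourier multiplier with symbol $I-|k|^{-2}\,k\otimes k$ for $k\neq0$, which is bounded and self-adjoint on every $H^s(\mathbb{T}^d)$ and commutes with spatial derivatives --- turns \eqref{eq:navier-stokes}, for divergence-free $u$, into the single evolution equation $u_t=\nu\Delta u-\mathbb{P}(u\cdot\nabla u)$, $u(0)=u_0$, after which $p$ is recovered as the unique mean-zero solution of $-\Delta p=\operatorname{div}\operatorname{div}(u\otimes u)$.

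Next I would regularize. With $\mathcal{J}_\varepsilon$ a standard mollifier (for instance, Fourier truncation to $|k|\le1/\varepsilon$, so that $\mathcal{J}_\varepsilon$ is a self-adjoint contraction on each $H^s(\mathbb{T}^d)$ commuting with $\Delta$ and with $\mathbb{P}$), I consider $u^\varepsilon_t=\nu\mathcal{J}_\varepsilon^2\Delta u^\varepsilon-\mathcal{J}_\varepsilon\mathbb{P}\big((\mathcal{J}_\varepsilon u^\varepsilon)\cdot\nabla(\mathcal{J}_\varepsilon u^\varepsilon)\big)$, $u^\varepsilon(0)=u_0$, posed on the closed subspace $X^r=\{v\in H^r(\mathbb{T}^d):\operatorname{div}v=0\}$. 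For fixed $\varepsilon>0$ the right-hand side is a locally Lipschitz map $X^r\to X^r$ --- every unbounded operator is now flanked by a mollifier --- so the Picard--Lindel\"of theorem on Banach spaces gives a unique maximal solution $u^\varepsilon\in C^1([0,T_\varepsilon);X^r)$, which stays divergence-free for all time.

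The heart of the matter is an a priori bound on $\norm{u^\varepsilon}_{H^r}$, and on $T_\varepsilon$, that is uniform in $\varepsilon$. Pairing the equation with $u^\varepsilon$ in $H^r$, the viscous term contributes $-\nu\norm{\nabla\mathcal{J}_\varepsilon u^\varepsilon}_{H^r}^2\le0$, while moving the mollifiers and the projection onto the second factor reduces the nonlinear term to $\langle v\cdot\nabla v,v\rangle_{H^r}$ with $v=\mathcal{J}_\varepsilon u^\varepsilon$ divergence-free; a commutator (Kato--Ponce) estimate, the cancellation $\langle v\cdot\nabla\Lambda^r v,\Lambda^r v\rangle_{L^2}=-\tfrac12\langle(\operatorname{div}v)\,\Lambda^r v,\Lambda^r v\rangle_{L^2}=0$, and the embedding $H^{r-1}(\mathbb{T}^d)\hookrightarrow L^\infty(\mathbb{T}^d)$ (valid since $r-1>\tfrac{d}{2}$) bound it by $C\norm{v}_{H^r}^3\le C\norm{u^\varepsilon}_{H^r}^3$. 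This gives $\tfrac{d}{dt}\norm{u^\varepsilon}_{H^r}^2\le C\norm{u^\varepsilon}_{H^r}^3$, whence a Riccati comparison produces a time $T=T(\norm{u_0}_{H^r})>0$, independent of $\varepsilon$, on which $\norm{u^\varepsilon(t)}_{H^r}$ remains bounded; the equation then bounds $u^\varepsilon_t$ uniformly in $C([0,T];H^{r-2})$. Invoking the compactness of the embedding $H^r(\mathbb{T}^d)\hookrightarrow H^{r'}(\mathbb{T}^d)$ ($r'<r$) together with the Aubin--Lions lemma (equivalently Arzel\`a--Ascoli), a subsequence converges $u^\varepsilon\to u$ in $C([0,T];H^{r'})$ for every $\tfrac{d}{2}+1<r'<r$, which suffices to pass to the limit in the quadratic nonlinearity, while weak-$\ast$ compactness yields $u\in L^\infty([0,T];H^r)$; hence $u$ solves $u_t=\nu\Delta u-\mathbb{P}(u\cdot\nabla u)$ with $u(0)=u_0$.

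It remains to upgrade the regularity. Since $r-1>\tfrac{d}{2}$, the nonlinearity lies in $L^\infty([0,T];H^{r-1})$ by the algebra property of $H^{r-1}$, so $u_t=\nu\Delta u-\mathbb{P}(u\cdot\nabla u)\in L^\infty([0,T];H^{r-2})$ and $u\in C^{0,1}([0,T];H^{r-2})$; interpolating with the weak continuity $u\in C_w([0,T];H^r)$ gives $u\in C([0,T];H^{r'})$ for every $r'<r$, and the endpoint $r'=r$ follows from a Bona--Smith argument (continuity of $t\mapsto\norm{u(t)}_{H^r}$, obtained by applying the $H^r$ estimate above to differences of mollified initial data). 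Once $u\in C([0,T];H^r)$, the equation shows $u_t\in C([0,T];H^{r-2})$, i.e. $u\in C^1([0,T];H^{r-2})$; and since $r>\tfrac{d}{2}+2$, Sobolev embedding makes $u$ and $u_t$ continuous on $\mathbb{T}^d\times[0,T]$ with $u(t)\in C^2(\mathbb{T}^d)$, so $u$, together with the recovered pressure $p\in C([0,T];H^{r-1})\subset C([0,T];C^1(\mathbb{T}^d))$ (as $r-1>\tfrac{d}{2}+1$), is a classical solution. The main obstacle is the $\varepsilon$-uniform nonlinear estimate --- the commutator inequality plus the careful bookkeeping for the mollifiers and the Leray projection --- with the top-regularity continuity $u\in C([0,T];H^r)$ a secondary technical point.
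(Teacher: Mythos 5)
The paper does not prove this statement itself; it is quoted directly from the cited reference \cite[Theorem 3.4]{majda2002vorticity}. Your sketch is correct and reproduces essentially the proof given there --- Leray projection, mollified Picard iteration, the $\varepsilon$-uniform $H^r$ energy estimate via the commutator bound and the divergence-free cancellation, compactness, and the Bona--Smith upgrade to $C([0,T];H^r)$ --- so it matches the source's approach.
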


Based on this result, we prove that $u$ is Sobolev regular i.e., that $u\in H^k(D\times [0,T])$ for some $k\in\mathbb{N}$, provided that $r$ is large enough. 

\begin{corollary}\label{cor:NS-Hk}
If $k\in\mathbb{N}$ and $u_0\in H^r(\mathbb{T}^d)$ with $r>\frac{d}{2}+2k$ and $\div{u_0}=0$, then there exist $T>0$ and a classical solution $u$ to the Navier-Stokes equation such that $u\in H^{k}(\mathbb{T}^d\times [0,T])$, $\nabla p\in H^{k-1}(\mathbb{T}^d\times [0,T])$ and $u(t=0)=u_0$. 
\end{corollary}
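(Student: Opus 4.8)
The plan is to bootstrap from the existence result of Theorem~\ref{thm:NS-existence} by repeatedly differentiating the Navier-Stokes equations in time and using elliptic/algebraic regularity in space. The point is that Theorem~\ref{thm:NS-existence} already gives $u\in C([0,T];H^r(\mathbb{T}^d))$ with $r$ large; I need to convert spatial regularity of the solution \emph{at each time} into mixed space-time Sobolev regularity, i.e.\ control of $\partial_t^j \nabla^\alpha u$ in $L^2(\mathbb{T}^d\times[0,T])$ for $j+|\alpha|\le k$. The key observation is that the equation itself lets me trade one time derivative for two space derivatives (plus lower-order nonlinear terms): from $u_t = \nu\Delta u - u\cdot\nabla u - \nabla p$, if $u(t)\in H^s$ then $u_t(t)\in H^{s-2}$, provided I can also control $\nabla p$ in $H^{s-2}$.

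First I would handle the pressure. Taking the divergence of the momentum equation and using $\divv u = 0$ gives the Poisson equation $-\Delta p = \divv(u\cdot\nabla u) = \sum_{i,j}\partial_i u_j\,\partial_j u_i$ on the torus, which determines $p$ up to a constant. Since $H^{s}(\mathbb{T}^d)$ is a Banach algebra for $s>d/2$, the right-hand side lies in $H^{r-2}(\mathbb{T}^d)$ whenever $u(t)\in H^r$ with $r-1>d/2$, and elliptic regularity on the torus (trivial via Fourier series, modulo the zero mode) yields $\nabla p(t)\in H^{r-1}(\mathbb{T}^d)$ with a bound polynomial in $\|u(t)\|_{H^r}$. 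This already gives the claimed $\nabla p\in H^{k-1}$ statement once the analogous bound for time derivatives is in place.

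Next I would set up the induction on the number $j$ of time derivatives. The base case $j=0$ is Theorem~\ref{thm:NS-existence}: $u\in C([0,T];H^r)$, hence $u\in L^2([0,T];H^r)\hookrightarrow H^0_t H^r_x$, and with $r>d/2+2k$ this controls $\nabla^\alpha u$ for $|\alpha|\le r$, in particular $|\alpha|\le 2k$. For the inductive step, assume $\partial_t^{j} u \in C([0,T];H^{r-2j})$; differentiate the momentum equation $j$ times in time to get $\partial_t^{j+1} u = \nu\Delta \partial_t^j u - \partial_t^j(u\cdot\nabla u) - \nabla \partial_t^j p$. The Laplacian term costs two space derivatives, putting it in $H^{r-2j-2}$; the nonlinear term expands by the Leibniz rule into a sum of products $\partial_t^a u\cdot\nabla\partial_t^b u$ with $a+b=j$, each of which lies in $H^{r-2j-1}$ by the algebra property (here I need $r-2j-1>d/2$, which follows from $r>d/2+2k$ and $j\le k$); and the pressure term is handled by differentiating the Poisson equation $j$ times in time and repeating the elliptic-regularity argument. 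Hence $\partial_t^{j+1}u\in C([0,T];H^{r-2(j+1)})$, and since $r-2(j+1)\ge r-2k > d/2 \ge 0$ for $j+1\le k$, all these spaces embed into $L^2(\mathbb{T}^d)$. Finally, collecting: for any multi-index with $j$ time derivatives and $|\alpha|$ space derivatives with $j+|\alpha|\le k$, I have $\partial_t^j\nabla^\alpha u \in C([0,T];H^{r-2j-|\alpha|})\subset C([0,T];L^2)\subset L^2([0,T];L^2(\mathbb{T}^d))$ because $r-2j-|\alpha|\ge r-2k>0$; summing the finitely many such terms gives $u\in H^k(\mathbb{T}^d\times[0,T])$, and similarly $\nabla p\in H^{k-1}$.

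**The main obstacle** is bookkeeping rather than a deep difficulty: one must check at every stage that the Sobolev index staying above $d/2$ (so the algebra property and the product estimates for the nonlinearity apply) is exactly what the hypothesis $r>d/2+2k$ buys, and that the pressure's loss of only one derivative (not two) is correctly propagated through the time-differentiated Poisson equations. A secondary subtlety is that $H^k(\mathbb{T}^d\times[0,T])$ is the \emph{isotropic} space-time Sobolev space, so I should make sure the mixed norms $\|\partial_t^j\nabla^\alpha u\|_{L^2}$ are all finite for $j+|\alpha|\le k$ — which the argument above delivers since each such term only consumes $2j+|\alpha|\le 2k$ units of the available $r$ derivatives — rather than the stronger anisotropic control one might naively try to prove. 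I would also remark that all constants in the resulting bound $\|u\|_{H^k(\mathbb{T}^d\times[0,T])}$ depend only on $T$, $\nu$, $d$, $k$ and $\|u_0\|_{H^r}$, via $\sup_{[0,T]}\|u(t)\|_{H^r}$ from Theorem~\ref{thm:NS-existence}, which is what the later error estimates will need.
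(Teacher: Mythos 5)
Your proposal is correct and follows essentially the same route as the paper: both derive the pressure Poisson equation $-\Delta p=\sum_{i,j}\partial_i u_j\,\partial_j u_i$, invoke the Banach algebra property of $H^{s}(\mathbb{T}^d)$ for $s>d/2$, and bootstrap by repeatedly differentiating the momentum and Poisson equations in time to show $u\in\cap_{\ell=0}^k C^\ell([0,T];H^{r-2\ell})$, which embeds into $H^k(\mathbb{T}^d\times[0,T])$ exactly because $r>\frac{d}{2}+2k$. (Your accounting of the Poisson right-hand side as $H^{r-2}$ is one derivative pessimistic --- the algebra property on $H^{r-1}$ actually puts it in $H^{r-1}$, giving $\nabla p(t)\in H^{r}$ as in the paper --- but your weaker bound still suffices for the induction, so this is harmless.)
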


\begin{proof}
The corollary follows directly from Theorem \ref{thm:NS-existence} for $k=1$. Therefore, let $k\geq 2$ be arbitary and assume that $r>\frac{d}{2}+2k$ and $u_0\in H^r(\mathbb{T}^d)$ and $\div{u_0}=0$. By Theorem \ref{thm:NS-existence} there exists $T>0$ and a classical solution $u$ to the Navier-Stokes equation such that $u(t=0)=u_0$ and $u\in C([0,T];H^r(\mathbb{T}^d))\cap C^1([0,T];H^{r-2}(\mathbb{T}^d))$. Following \cite[Section 1.8]{majda2002vorticity}, we find that the pressure $p$ satisfies the equation
\begin{equation}\label{eq:poisson-p}
    -\Delta p = \mathrm{Trace}((\nabla u)^2) = \sum_{i,j} u^i_{x_j}u^j_{x_i}. 
\end{equation}
As $r>\frac{d}{2}+1$, $H^{r-1}(\mathbb{T}^d)$ is a Banach algebra (see Lemma \ref{lem:banach-algebra}), it holds that $\Delta p\in C([0,T];H^{r-1}(\mathbb{T}^d))$ and accordingly $\nabla p\in C([0,T];H^{r}(\mathbb{T}^d))$. Since $u\in C^1([0,T];H^{r-2})$, we can take the time derivative of equation \eqref{eq:poisson-p} to find that $\Delta p_t \in C([0,T];H^{r-3})$, since the conditions for $H^{r-3}(\mathbb{T}^d)$ to be a Banach algebra are met. As a result we find that $\nabla p_t \in C([0,T];H^{r-2})$. Taking the time derivative of the Navier-Stokes equations \eqref{eq:navier-stokes}, we find that $u_{tt} \in C([0,T];H^{r-4})$ and therefore $u\in  C^2([0,T];H^{r-4})$. Repeating these steps, one can prove that $u\in \cap_{\ell=0}^kC^\ell([0,T];H^{r-2\ell}(\mathbb{T}^d))$. The statement of the corollary then follows from this observation since $\ell+r-2\ell \geq k$ for all $0\leq \ell \leq k$ if $r> \frac{d}{2}+ 2k$. Similarly, one can prove that $\nabla p\in \cap_{\ell=0}^{k-1}C^\ell([0,T];H^{r-2\ell}(\mathbb{T}^d))$. 
\end{proof}

\subsection{Neural networks}
As our objective is to approximate the solution of the incompressible Navier-Stokes equations \eqref{eq:navier-stokes} with neural networks, here we formally introduce our definition of a neural network and the related terminology. 
\begin{definition}
\label{def:nn}
Let $R\in(0,\infty]$, $L,W\in\mathbb{N}$ and $l_0,\ldots, l_L\in\mathbb{N}$. Let $\sigma:\mathbb{R}\to\mathbb{R}$ be a twice differentiable \emph{activation function} and define 
\begin{equation}
   \Theta =  \Theta_{L,W,R} := \bigcup_{L'\in\mathbb{N}, L'\leq L}\:\bigcup_{l_0,\ldots,l_L\in\{1, \ldots, W\}}\bigtimes_{k=1}^{L'} \left([-R,R]^{l_k\times l_{k-1}}\times[-R,R]^{l_k}\right). 
\end{equation}
For $\theta\in\Theta_{L,W,R}$,  we define $\theta_k:= (\mathcal{W}_k,b_k)$ and $\mathcal{A}_k:\mathbb{R}^{l_{k-1}}\to\mathbb{R}^{l_{k}}:x\mapsto \mathcal{W}_k x+b_k$ for $1\leq k\leq L$ and and we define $f^\theta_k:\mathbb{R}^{l_{k-1}}\to\mathbb{R}^{l_{k}}$ by
\begin{equation}
    f_k^\theta(z) = \begin{cases}\mathcal{A}_L^\theta(z) & k=L,\\ (\sigma \circ \mathcal{A}_{k}^\theta)(z) & 1\leq k < L. \end{cases}
\end{equation}
We denote by $u_\theta:\mathbb{R}^{l_0}\to\mathbb{R}^{l_L}$ the function that satisfies for all $z \in\mathbb{R}^{l_0}$ that
% \begin{equation}
%     u_\theta(x) = \begin{cases}\mathcal{A}_1(x) & L=1\\ (\mathcal{A}_L \circ \sigma \circ \mathcal{A}_{L-1} \circ \sigma \circ\cdots \circ \sigma \circ \mathcal{A}_1)(x) & L>1. \end{cases}
% \end{equation}
\begin{equation}
\label{eq:dnn}
     u_\theta(z) = \left(f_{L}^\theta\circ f_{L-1}^\theta \circ \cdots \circ f_1^\theta\right)(z), 
\end{equation}
where in the setting of approximating the Navier-Stokes equation \eqref{eq:navier-stokes} we set $l_0 = d+1$ and $z=(x,t)$.
We refer to $u_\theta$ as the realization of the \emph{neural network} associated to the parameter $\theta$ with $L$ layers and widths $(l_0,l_1, \ldots, l_L)$. We refer to the first $L-1$ layers as \emph{hidden layers}. For $1\leq k\leq L$, we say that layer $k$ has width $l_k$ and we refer to $\mathcal{W}_k$ and $b_k$ as the \emph{weights and biases} corresponding to layer $k$. {\color{black} The width of $u_\theta$ is defined as $\max(l_0,\dots, l_L)$.} If $L=2$, we say that $u_\theta$ is a \emph{shallow neural network}; if $L\geq 3$, we say that $u_\theta$ is a \emph{deep neural network}. 
\end{definition}

\subsection{Quadrature rules}\label{sec:quad}

In the following sections, we will need to approximate integrals of functions. For this reason, we introduce some notation and recall well-known results on numerical quadrature rules. 

Given $\Lambda \subset \mathbb{R}^d$ and $f\in L^1(\Lambda)$, we will be interested in approximating $\int_\Lambda f(y)dy$, with $dy$ denoting the $d$-dimensional Lebesgue measure. A numerical quadrature rule provides such an approximation by choosing some quadrature points $y_m\in \Lambda$ for $1\leq m\leq M$, and quadrature weights $w_m>0$ for $1\leq m\leq M$, and considers the approximation 
\begin{equation}
    \frac{1}{M}\sum_{m=1}^M w_m f(y_m) \approx  \int_\Lambda f(y)dy. 
\end{equation}
The accuracy of this approximation depends on the chosen quadrature rule, the number of quadrature points $M$ and the regularity of $f$. Whereas in very high dimensions, random training points or low-discrepancy training points \citep{mishra2020enhancing} are needed, the relatively low-dimensional setting of the Navier-Stokes equations i.e., $d\leq 4$, allows the use of standard deterministic numerical quadrature points. In order to obtain explicit rates, we will focus on the \emph{midpoint rule}, but our analysis will also hold for general deterministic numerical quadrature rules.

We briefly recall the midpoint rule. For $N\in\mathbb{N}$, we partition $\Lambda$ into $M \sim N^d$ cubes of edge length $1/N$ and we denote by $\{y_m\}_{m=1}^M$ the midpoints of these cubes. The formula and accuracy of the midpoint rule $\qu{M}^\Lambda$ are then given by, 
\begin{equation}\label{eq:quad-error}
    \qu{M}^\Lambda[f] := \frac{1}{M}\sum_{m=1}^Mf(y_m), \qquad \abs{\int_\Lambda f(y) dy - \qu{M}^\Lambda[f]} \leq C_f M^{-2/d},
\end{equation}
where $C_f \lesssim \norm{f}_{C^2}$. 

\subsection{Physics-informed neural networks (PINNs)}\label{sec:PINNs}

We seek deep neural networks $u_{\theta}:D\times [0,T]\to\mathbb{R}^d$ and $p_{\theta}:D\times [0,T]\to\mathbb{R}$ (cf. Definition \ref{def:nn}), parameterized by $\theta \in \Theta$, constituting the weights and biases, that approximate the solution $u$ of \eqref{eq:navier-stokes}. To this end, the key idea behind PINNs is to consider pointwise \emph{residuals}, defined in the setting of the Navier-Stokes equations \eqref{eq:navier-stokes} for any sufficiently smooth $v:D\times [0,T]\to \mathbb{R}^{d}$ and $q:D\times [0,T]\to \mathbb{R}$ as, 
\begin{align}\label{eq:pinn-residuals}
\begin{split}
    \rpde[(v,q)](x,t) &= (v_t + v\cdot \nabla v + \nabla q - \nu
  \Delta v)(x,t), \\   
  \rdiv[v](x,t) &= \div{v}(x,t)\\
  \rs[v](y,t) &= \mathcal{B}v(y,t) - \psi(y,t), 
  \\ \rt[v](x) &= v(x,0) - \varphi(x)
\end{split}
\end{align}
for $x\in D$, $y\in \partial D$, $t\in [0,T]$. In the above, $\mathcal{B}$ is the boundary operator, $\psi:\partial D\times [0,T]\to\mathbb{R}^d$ specifies the (spatial) boundary condition and $\varphi:D\to\mathbb{R}^d$ is the initial condition.
Using these residuals, one measures how well a function $f$ satisfies resp. the PDE, the boundary condition and the initial condition of \eqref{eq:navier-stokes}. Note that for the exact solution to the Navier-Stokes equations \eqref{eq:navier-stokes} it holds that $  \rpde[(u,p)]=\rdiv[u]=\rs[u]=\rt[u]=0$. 

Hence, within the PINNs algorithm, one seeks to find a neural network $(u_\theta,p_\theta)$, for which all residuals are simultaneously minimized, e.g. by minimizing the quantity,
\begin{align}\label{eq:generalization-error-pinn}
\begin{split}
     \Eg(\theta)^2 =&\: \int_{D\times[0,T]} \norm{\rpde[(u_\theta,p_\theta)](x,t)}_{\mathbb{R}^d}^2 dxdt + \int_{D\times[0,T]} \abs{\rdiv[u_\theta](x,t)}^2 dxdt\\&+  \int_{\partial D\times[0,T]} \norm{\rs[u_\theta](x,t)}_{\mathbb{R}^d}^2 ds(x)dt +  \int_{D} \norm{\rt[u_\theta](x)}_{\mathbb{R}^d}^2 dx. 
\end{split}
\end{align}
The different terms of \eqref{eq:generalization-error-pinn} are often rescaled using some weights. For simplicity, we set all these weights to one. 
The quantity $\Eg(\theta)$, often referred to as the \emph{population risk} or  \textit{generalization error} of the neural network $u_\theta$, involves integrals and can therefore not be directly minimized in practice. Instead, the integrals in \eqref{eq:generalization-error-pinn} are approximated by a numerical quadrature, as introduced in Section \ref{sec:quad}. As a result, we define the (squared) training loss for PINNs $\theta \mapsto \Et(\theta, \S)^2$ as follows, 
\begin{equation}
\label{eq:training-loss-pinn}
\begin{aligned}
\Et(\theta,\S)^2 =&\: \Et^\pde(\theta,\S_\inte)^2 +  \Et^\divv(\theta,\S_\inte)^2 +\Et^s(\theta,\S_s)^2+ \Et^t(\theta,\S_t)^2\\
     =&\: \sum_{n=1}^{N_\inte} w^n_{\inte}\norm{\rpde[(u_\theta,p_\theta)](t^n_\inte,x^n_\inte)}_{\mathbb{R}^d}^2
     +\sum_{n=1}^{N_\inte} w^n_{\inte}\abs{\rdiv[u_\theta](t^n_\inte,x^n_\inte)}^2 \\
      &+ \sum_{n=1}^{N_s} w^n_s\norm{\mathcal{R}_s[u_\theta](t^n_s,x^n_s)}_{\mathbb{R}^d}^2 +
      \sum_{n=1}^{N_t} w^n_t\norm{\mathcal{R}_t[u_\theta](x^n_t)}_{\mathbb{R}^d}^2,
      \end{aligned}
\end{equation}
where the training data set $\S = (\S_\inte, \S_s, \S_t)$ is chosen as quadrature points with respect to the relevant domain (resp. $D\times [0,T]$, $\partial D\times [0,T]$ and $D$) and where the $w^n_\ast$ are corresponding quadrature weights.  

A \emph{trained PINN} $u^{\ast} = u_{\theta^{\ast}}$ is then defined as a (local) minimum of the optimization problem, 
\begin{equation}
\label{eq:opt}
    \theta^*(\S) = \arg\min_{\theta\in\Theta}  \Et(\theta,\S)^2, 
\end{equation}
with loss function \eqref{eq:training-loss-pinn} (possibly with additional data and weight regularization terms), found by a (stochastic) gradient descent algorithm such as ADAM or L-BFGS. 

\subsection{Extended physics-informed neural networks (XPINNs)}
\label{sec:XPINNs}
In many applications, it happens that the computational domain has a very complicated shape or that the PDE solution shows localized features. In such cases, it is beneficial to decompose the computational domain into non-overlapping regions and deploy different neural networks to approximate the PDE solution in different sub-regions. This idea was first presented in \citep{jagtap2020extended}, where the authors proposed to decompose the domain in $\mathcal{N}$ closed subdomains with non-overlapping interior and deploy PINNs $u_{\theta_q}$ to approximate the exact solution $u$ in each of those subdomains $\Omega_q$.  Patching together the PINNs for all the subnetworks yields the final approximation $u_\theta$, termed \textit{extended physics-informed neural network (XPINN)}, defined as, 
\begin{equation}\label{eq:xpinn}
    u_\theta(z) = 
      \sum_{q=1}^\mathcal{N} \chi_q(z) u_{\theta_q}(z),
\end{equation}
for $z=(x,t)$ and where the weight function $\chi_q$ is given by,
\begin{equation}
   \chi_q(z) = \begin{cases}
      0 & z\not\in\Omega_q, \\
      \frac{1}{\#\{n\: : \: z \in \Omega_n\}} & z\in\Omega_q,  
    \end{cases}
\end{equation}
where $\#\{n\: : \: z \in \Omega_n\}$ represents the number of subdomains $z$ belongs to. Hence $\sum_{q}\chi_q(z) = 1$ for all $z$. One can define neural networks $p_\theta$ and $p_{\theta_q}$ in an analogous way. 
It is clear that mimimizing the standard PINN loss \eqref{eq:training-loss-pinn} for an XPINN \eqref{eq:xpinn} would not be a suitable approach. It is necessary that additional terms in the form of \textit{interface conditions} should be added to the loss function. For this purpose, we define for every $q$ the following residuals in addition to the standard PINN residuals \eqref{eq:pinn-residuals}, 
\begin{align}
    \begin{split}
        \ru[f](y,t) = f(y,t) - u_\theta(y,t),
    \end{split}
\end{align}
where $y\in \partial \Omega_q \setminus \partial D$ and $t\in [0,T]$. 
The squared generalization error of an XPINN $u_\theta$ is then given by
\begin{align}\label{eq:generalization-error-xpinn}
\begin{split}
     \Eg(\theta)^2 =&\: \int_{D\times[0,T]} \norm{\rpde[(u_\theta, p_\theta)](x,t)}_{\mathbb{R}^d}^2 dxdt + \int_{D\times[0,T]} \abs{\rdiv[u_\theta](x,t)}^2 dxdt\\&+  \int_{\partial D\times[0,T]} \norm{\rs[u_\theta](x,t)}_{\mathbb{R}^d}^2 ds(x)dt +  \int_{D} \norm{\rt[u_\theta](x)}_{\mathbb{R}^d}^2 dx\\
     &+ \sum_{q=1}^\mathcal{N}\int_{(\partial \Omega_q \setminus \partial D)\times[0,T]}\norm{\ru[u_{\theta_q}](x,t)}_{\mathbb{R}^d}^2 ds(x)dt\\
     &+ \sum_{q=1}^\mathcal{N}\int_{(\partial \Omega_q \setminus \partial D)\times[0,T]}\norm{\rpde[(u_{\theta_q}, p_{\theta_q})](x,t)-\rpde[u_{\theta}](x,t)}_{\mathbb{R}^d}^2 ds(x)dt
     . 
\end{split}
\end{align}
The interface conditions on the two last lines of \eqref{eq:generalization-error-xpinn} enforce the continuity and possibly even higher regularity of the XPINN at the interface of neighbouring subdomains. 

The XPINN training loss can then be defined by replacing the integrals in \eqref{eq:generalization-error-xpinn} by numerical quadratures, in the same way the standard PINN training loss \eqref{eq:training-loss-pinn} was derived from \eqref{eq:generalization-error-pinn}.

\section{Error analysis}
\label{sec:3}
In this section, we will obtain rigorous on the PINN and XPINN approximations of the solutions of the incompressible Navier-Stokes equations. We start with bounds on PINN residuals below. 

\subsection{Bound on the PINN residuals} From the definition of the interior PINN residuals \eqref{eq:pinn-residuals}, it is clear that if we can find a neural network $\hat{u}$ such that $\norm{u-\hat{u}}_{H^2(D\times [0,T])}$ is small, then the interior PINN residual will be small as well. The approximation (in Sobolev norm) of Sobolev regular functions by tanh neural networks is discussed in Appendix \ref{sec:tanh}. The main ingredients are a piecewise polynomial approximation, the existence of which is guaranteed by the Bramble-Hilbert lemma, and the ability of tanh neural networks to efficiently approximate polynomials, the multiplication operator and an approximate partition of unity. The main result of Appendix \ref{sec:tanh} is Theorem \ref{thm:tanh-approximation}, which is a variant of \cite[Theorem 5.1]{deryck2021approximation}. It proves that a tanh neural network with two hidden layers suffices to make $\norm{u-\hat{u}}_{H^2(D\times [0,T])}$ arbitrarily small and provides explicit bounds on the needed network width. Using this theorem, we can prove the following upper bound on the PINN residual. 

\begin{theorem}\label{thm:pinn-approx}
Let $n\geq 2$, $d,r,k\in\mathbb{N}$, with $k\geq 3$,
%, let $D\subset \mathbb{R}^d$  be a Lipschitz domain
and let $u_0\in H^r(\mathbb{T}^d)$ with $r>\frac{d}{2}+2k$ and $\div{u_0}=0$. It holds that: 
\begin{itemize}
    \item there exist $T>0$ and a classical solution $u$ to the Navier-Stokes equations such that $u\in H^{k}(\Omega)$, $\nabla p\in H^{k-1}(\Omega)$, $\Omega = \mathbb{T}^d\times [0,T]$, and $u(t=0)=u_0$,  
    \item for every $N>5$, there exist tanh neural networks $\hu_j$, $1\leq j\leq d$, and $\widehat{p}$, each with two hidden layers, of widths $3\left\lceil\frac{k+n-2}{2}\right\rceil\binom{d+k-1}{d}+\lceil TN\rceil +dN$ and $3\left\lceil\frac{d+n}{2}\right\rceil \binom{2d+1}{d}\lceil TN \rceil N^{d}$, such that for every $1\leq j\leq d$, 
    \begin{align}
    \begin{split}
        \ltwo{(\hu_j)_t + \hu\cdot \nabla \hu_j + (\nabla\widehat{p})_j - \nu \Delta \hu_j} &\leq C_1\ln^2(\beta N) N^{-k+2},
    \end{split}\label{eq:bound-pinn-res-1}\\
    \ltwo{\div{\hu}} &\leq C_2 \ln(\beta N) N^{-k+1},\label{eq:bound-pinn-res-2}\\
     \norm{(u_0)_j-\hu_j(t=0)}_{L^2(\mathbb{T}^d)} &\leq C_3 \ln(\beta N)N^{-k+1}, \label{eq:bound-pinn-res-3}
    \end{align}
where the constants $\beta,C_1,C_2,C_3$ are explicitly defined in the proof and can depend on $k$, $d$, $T$, $u$ and $p$ but not on $N$. {\color{black}The weights of the networks can be bounded by $\bigO(N^\gamma \ln(N))$ where $\gamma = \max\{1,d(2+k^2+d)/n\}$.}
\end{itemize}

\end{theorem}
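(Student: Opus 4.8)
The plan is to reduce all three estimates to approximating the (smooth) exact solution in Sobolev norms and then to use that the exact solution annihilates the residuals. First I would apply Corollary~\ref{cor:NS-Hk} with the given $k$ to obtain $T>0$ and a classical solution with $u\in H^k(\Omega)$, $\nabla p\in H^{k-1}(\Omega)$ on $\Omega=\mathbb{T}^d\times[0,T]$ (normalising $\int_{\mathbb{T}^d}p(\cdot,t)\,dx=0$ and using elliptic regularity for the pressure Poisson equation~\eqref{eq:poisson-p} one also gets $p\in H^k(\Omega)$). Next I would invoke Theorem~\ref{thm:tanh-approximation}, applied componentwise to $u$ and to $p$ with auxiliary parameter $n$ and resolution $N$, to produce tanh networks $\hu_j$ and $\hp$ with two hidden layers of the stated widths such that $\norm{u-\hu}_{H^m(\Omega)}\le C\ln^{m}(\beta N)N^{-(k-m)}$ for the relevant values of $m\le k$, $\norm{p-\hp}_{H^1(\Omega)}\le C\ln(\beta N)N^{-(k-1)}$, and all weights bounded by $\bigO(N^\gamma\ln N)$ with $\gamma=\max\{1,d(2+k^2+d)/n\}$; the larger width of $\hp$ stems from the particular construction used for the pressure (e.g.\ approximately solving the pressure Poisson equation for $\hu$ on the space-time grid).

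Given this, \eqref{eq:bound-pinn-res-2} and \eqref{eq:bound-pinn-res-3} are immediate: since $\div{u}=0$ we have $\ltwo{\div{\hu}}=\ltwo{\div{(\hu-u)}}\le C\norm{\hu-u}_{H^1(\Omega)}$, and since $(u_0)_j-\hu_j(t=0)=(u_j-\hu_j)(\cdot,0)$ the trace inequality $\norm{f(\cdot,0)}_{L^2(\mathbb{T}^d)}^2\lesssim_T\norm{f}_{L^2(\Omega)}\norm{f}_{H^1(\Omega)}\le\norm{f}_{H^1(\Omega)}^2$ applied to $f=u_j-\hu_j$ gives the bound. For~\eqref{eq:bound-pinn-res-1} I would subtract the momentum equation $(u_j)_t+u\cdot\nabla u_j+(\nabla p)_j-\nu\Delta u_j=0$, so that the residual of $\hu$ equals $\partial_t(\hu_j-u_j)+\hu\cdot\nabla(\hu_j-u_j)+(\hu-u)\cdot\nabla u_j+\partial_{x_j}(\hp-p)-\nu\Delta(\hu_j-u_j)$; the $L^2(\Omega)$ norms of the first, fourth and fifth terms are controlled by $\norm{\hu-u}_{H^1(\Omega)}$, $\norm{\hp-p}_{H^1(\Omega)}$ and $\nu\norm{\hu-u}_{H^2(\Omega)}$ respectively, the last one being dominant, while the two nonlinear pieces are bounded by $\norm{\hu}_{L^\infty(\Omega)}\norm{\hu-u}_{H^1(\Omega)}$ and $\norm{\nabla u_j}_{L^\infty(\Omega)}\norm{\hu-u}_{L^2(\Omega)}$.

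The step I expect to require the most care is bounding $\norm{\hu}_{L^\infty(\Omega)}$ uniformly in $N$, so that the nonlinear term does not spoil the rate. I would write $\norm{\hu}_{L^\infty}\le\norm{u}_{L^\infty}+\norm{\hu-u}_{L^\infty}$ and use the Sobolev embedding $H^s(\Omega)\hookrightarrow L^\infty(\Omega)$ for an integer $s$ with $\tfrac{d+1}{2}<s\le k$; this is precisely where the hypothesis $k\ge3$ (together with the low dimension of the problem) is used, since it guarantees such an $s$ exists, and then Theorem~\ref{thm:tanh-approximation} bounds $\norm{\hu-u}_{H^s(\Omega)}$, hence $\norm{\hu}_{L^\infty(\Omega)}$, by a constant depending only on $u$. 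After that the remaining work is bookkeeping: combining the estimates to read off the explicit constants $\beta,C_1,C_2,C_3$ in terms of $k,d,T,\norm{u}_{W^{1,\infty}},\norm{p}_{H^k}$ and the constants from Theorem~\ref{thm:tanh-approximation}, and tracking the logarithmic factors $\ln(\beta N)$ and $\ln^2(\beta N)$ that accumulate when the tanh approximant is differentiated once or twice.
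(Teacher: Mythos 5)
Your proposal is correct and follows essentially the same route as the paper: Corollary~\ref{cor:NS-Hk} for the Sobolev regularity of $(u,p)$, Theorem~\ref{thm:tanh-approximation} applied componentwise to $u$ and to $p$ to get the stated widths and weight bounds, and then the identical splitting of the momentum residual (with the nonlinear term decomposed as $(\hu-u)\cdot\nabla u_j+\hu\cdot\nabla(u_j-\hu_j)$) together with a trace inequality for the initial-data term. The one caveat is your justification of the uniform-in-$N$ bound on $\norm{\hu}_{L^\infty(\Omega)}$: since $\Omega\subset\mathbb{R}^{d+1}$ and Theorem~\ref{thm:tanh-approximation} only controls the approximation error in $H^\ell(\Omega)$ for $\ell\le 2$, the embedding $H^s(\Omega)\hookrightarrow L^\infty(\Omega)$ is not available from those estimates once $d\ge 3$; the clean fix is to use the sup-norm ($W^{0,\infty}$) error estimate that the construction of $\widehat{f}^N$ also provides, which is what implicitly underlies the factor $\max_i\norm{\hu_i}_{C^0}$ appearing in the paper's constant.
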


\begin{proof}
Let $N>5$. By Corollary \ref{cor:NS-Hk} it holds that $u\in H^k(\mathbb{T}^d\times [0,T])$ and $\nabla p\in H^{k-1}(\mathbb{T}^d\times [0,T])$, hence also $p\in H^{k-1}(\mathbb{T}^d\times [0,T])$. As a result of Theorem \ref{thm:tanh-approximation}, there then exists for every $1\leq j\leq d$ a tanh neural network $\hu_j:=\uh_j$ with two hidden layers and widths $3\left\lceil\frac{k+n-2}{2}\right\rceil\binom{d+k-1}{d}+\lceil TN\rceil +dN$ and $3\left\lceil\frac{d+n}{2}\right\rceil \binom{2d+1}{d}\lceil TN \rceil N^{d}$ such that for every $0\leq \ell \leq 2$,
\begin{equation}
    \norm{u_j-\hu_j}_{H^\ell(\Omega)} \leq C_{\ell,k,d+1, u_j} \lambda_{\ell}(N) N^{-k+\ell}, 
\end{equation}
where $\lambda_{\ell}(N) = 2^{\ell+1} 3^d \left(1+\delta\right)\ln^\ell\left(\beta_{\ell, d+1,u_j}N^{d+k+2}\right)$, $\delta = \frac{1}{100}$, and the definition of the other constants can be found in Theorem \ref{thm:tanh-approximation}. The weights can be bounded by $\bigO(N^\gamma \ln(N))$ where $\gamma = \max\{1,d(2+k^2+d)/n\}$. We write $\hu = (\hu_1, \ldots, \hu_d)$. Moreover, by Theorem \ref{thm:tanh-approximation}, there also exists a tanh neural network $\widehat{ p}:=\widehat{p}^N$ with two hidden layers and the same widths as before such that
\begin{equation}
    \ltwo{(\nabla p)_j-(\nabla\widehat{p})_j}\leq  \norm{p-\widehat{p}}_{H^1(\Omega)} \leq C_{1,k-1,d+1,p}\lambda_1(N) N^{-k+2}.
\end{equation}
It is now straightforward to bound the PINN residual. 
\begin{equation}
    \ltwo{(u_j)_t-(\hu_j)_t} \leq \abs{u_j-\hu_j}_{H^1(\Omega)} .
\end{equation}
By the Sobolev embedding theorem (Lemma \ref{lem:sobolev-embedding}) it follows from $u\in C^1([0,T], H^{r-2}(\mathbb{T}^d))$ that $u\in C^1(\Omega)$, and hence
\begin{align}
\begin{split}
     \ltwo{u\cdot \nabla u_j -\hu\cdot \nabla \hu_j} &\leq  \ltwo{u\cdot \nabla u_j -\hu\cdot \nabla u_j} +  \ltwo{\hu\cdot \nabla u_j -\hu\cdot \nabla \hu_j} \\
     &\leq  \sqrt{d}\norm{u_j}_{C^1}\max_i\ltwo{u_i-\hu_i} +   \sqrt{d}\max_i\norm{\hu_i}_{C^0}\abs{u_j-\hu_j}_{H^1(\Omega)}
\end{split}\end{align}
and finally also
\begin{align}
     \ltwo{\Delta u_j - \Delta \hu_j} &\leq \sqrt{d} \htwo{u_j -\hu_j}\\
     \ltwo{\div{u} - \div{\hu}} &\leq \sqrt{d} \max_i\abs{u_i -\hu_i}_{H^1(\Omega)}.
\end{align}
Hence, we find that for $1\leq j\leq d$,
\begin{align}
\begin{split}
    &\ltwo{(\hu_j)_t + \hu\cdot \nabla \hu_j + (\nabla\widehat{p})_j - \nu \Delta \hu_j} \leq C_{1,k-1,d+1,p}\lambda_1(N) N^{-k+2}\\
    &\quad  +C_{1,k,d+1, u_j}\lambda_{1}(N)(1+\sqrt{d}\max_i\norm{\hu_i}_{C^0})N^{-k+1} \\&\quad+ \sqrt{d}\lambda_{0}(N)\norm{u_j}_{C^1}C_{0,k,d+1, u_j}N^{-k} + \nu\sqrt{d}C_{2,k,d+1, u_j} \lambda_{2}(N)N^{-k+2}
\end{split}
\end{align}
and also
\begin{equation}
    \ltwo{\div{\hu}} \leq \sqrt{d}C_{1,k,d+1, u_1} \lambda_{1}(N)N^{-k+1}.
\end{equation}
Finally, we find from the multiplicative trace theorem (Lemma \ref{lem:trace-inequality}) that
\begin{align}
\begin{split}
    \norm{(u_0)_j-\hu_j(t=0)}_{L^2(\mathbb{T}^d)} &\leq \norm{u_j-\hu_j}_{L^2(\partial \Omega)} \\&\leq \sqrt{\frac{2\max\left\{2h_\Omega,d+1\right\}}{\rho_\Omega}}\norm{u_j-\hu_j}_{H^1(\Omega)}\\
    &\leq \sqrt{\frac{2\max\left\{2h_\Omega,d+1\right\}}{\rho_\Omega}}C_{1,k,d+1, u_1} \lambda_{1}(N)N^{-k+1}, 
\end{split}\end{align}
where $h_\Omega$ is the diameter of $\Omega$ and $\rho_\Omega$ is the radius of the largest $(d+1)$-dimensional ball that can be inscribed into $\Omega$. This concludes the proof.
\end{proof}

Thus, the bounds \eqref{eq:bound-pinn-res-1}, \eqref{eq:bound-pinn-res-2} and \eqref{eq:bound-pinn-res-3} clearly show that by choosing $N$ sufficiently large, we can make the PINN residuals \eqref{eq:pinn-residuals} and consequently the generalization error arbitrarily small. This affirmatively answers Q1 in the introduction. 

To further illustrate the bounds of Theorem \ref{thm:pinn-approx}, we look for a suitable neural network such that \eqref{eq:bound-pinn-res-1}, \eqref{eq:bound-pinn-res-2} and \eqref{eq:bound-pinn-res-3} are all smaller than $1\%$. Using the notation of the proof, we set $n=2$, $T=1$ and $\nu = \frac{1}{1000}$ and make the simplification that $ \hkunit{u} = 1$ for all $k$. The results are shown for $d=2,3$ in Figure \ref{fig:nn-size-pinn} for varying regularity $r$ of the initial condition i.e., $u_0\in H^r(\mathbb{T}^d)$. In particular, for $d=2$ we find that the minimal network size of every sub-network $\hu_j$ is $54\cdot 10^3$ neurons. Although it is certainly possible to reach this level of accuracy with smaller networks, see e.g. \citep{jin2021nsfnets}, the networks that follow from Theorem \ref{thm:pinn-approx} are not unreasonably large, even in three space dimensions.  

\begin{remark}
One can easily prove that the XPINN loss of the network constructed in the proof of Theorem \ref{thm:pinn-approx} will be small as well. 
\end{remark}
{\color{black}
\begin{remark}
In Theorem \ref{thm:pinn-approx} the parameter $n \geq 2$ can be chosen arbitrarily and is independent of $u$. It controls the trade-off between the network width (which grows linearly in $n$) and the network weights (which grow as $\bigO(N^\gamma \ln(N))$ where $\gamma = \max\{1,d(2+k^2+d)/n\}$). Note that it only makes sense to choose $2\leq n\leq d(2+k^2+d)$ as the bound on the weights can not be made smaller than $\bigO(N\ln(N))$. Hence, this bound proves the existence of a neural network architecture for which the weights will only grow very moderately with increasing accuracy. 
\end{remark}
}
\begin{figure}
    \centering
    \includegraphics[width=0.8\textwidth]{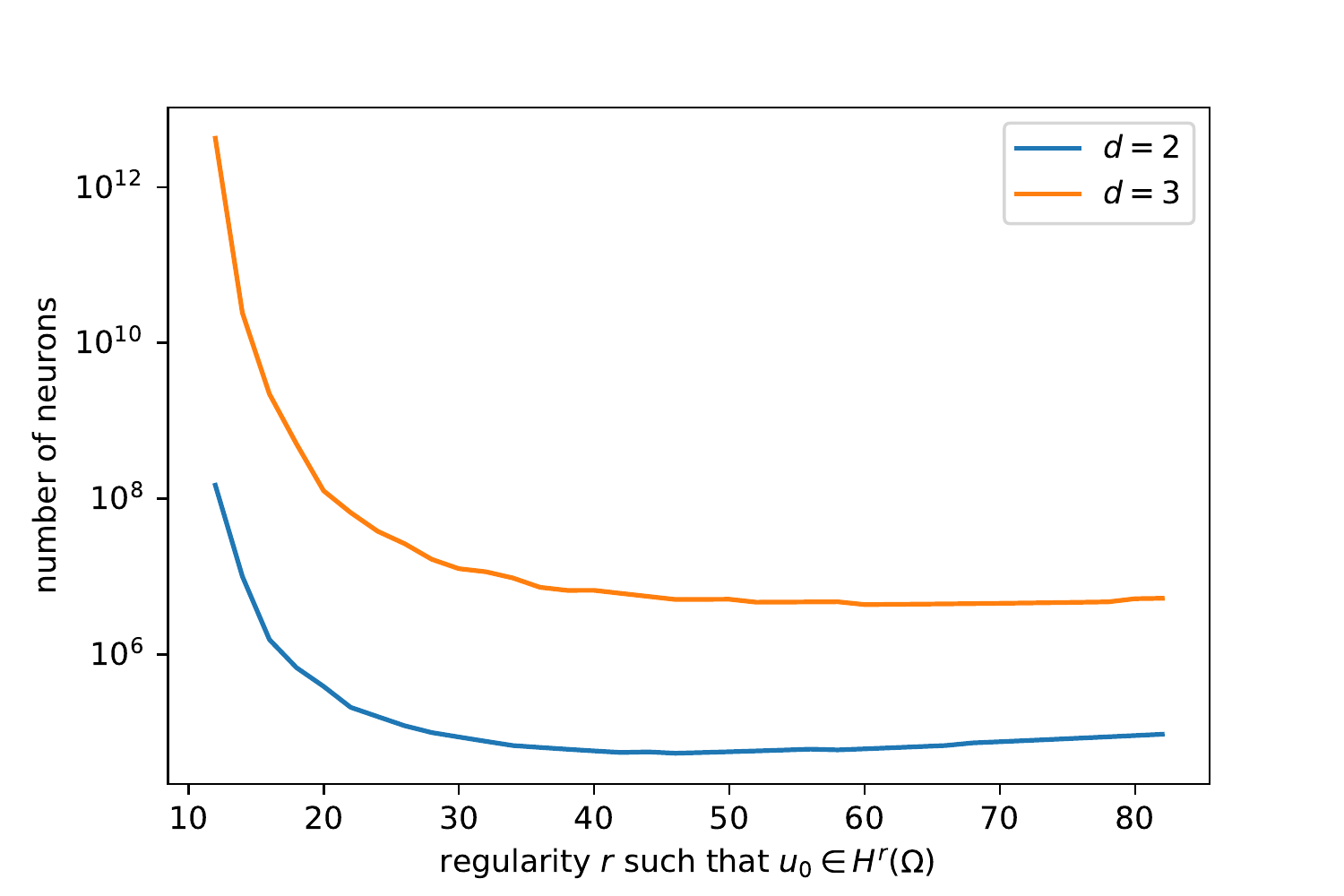}
    \caption{Needed neural network size according to Theorem \ref{thm:pinn-approx} such that \eqref{eq:bound-pinn-res-1}, \eqref{eq:bound-pinn-res-2} and \eqref{eq:bound-pinn-res-3} are all smaller than $1\%$ for varying regularity $r$ of the initial condition i.e., $u_0\in H^r(\mathbb{T}^d)$. }
    \label{fig:nn-size-pinn}
\end{figure}

\subsection{Bound on the total error.} Next, we will show that neural networks for which the (X)PINN residuals are small, will provide a good $L^2$-approximation of the true solution $u:\Omega=D\times [0,T]\to \mathbb{R}^d$, $p:\Omega\to\mathbb{R}$ of the Navier-Stokes equation \eqref{eq:navier-stokes} on the torus $D=\mathbb{T}^d=[0,1)^d$ with periodic boundary conditions. Our analysis can be readily extended to other boundary conditions, such as no-slip boundary condition i.e., $u(x,t)=0$ for all $(x,t)\in \partial D\times [0,T]$, and no-penetration boundary conditions i.e., $u(x,t)\cdot \hn_D=0$ for all $(x,t)\in \partial D\times [0,T]$. 

For neural networks $(u_\theta, p_\theta)$, we define the following PINN-related residuals, 
\begin{align}\label{eq:new-pinn-residuals}
    \begin{split}
        &\rpde = \partial_t u_\theta + (u_\theta\cdot\nabla)u_\theta + \nabla p_\theta - \nu\Delta u_\theta, \qquad
        \rdiv = \div{u_\theta},\\ &\rsu(x) = u_\theta(x)-u_\theta(x+1), \qquad \rsp(x) = p_\theta(x)-p_\theta(x+1),\\ &\rsgu(x) =\nabla u_\theta(x)-\nabla u_\theta(x+1), \qquad \rs = (\rsu, \rsp, \rsgu), \\&\rt = u_\theta(t=0)-u(t=0), 
    \end{split}
\end{align}
where we drop the $\theta$-dependence in the definition of the residuals for notational convenience. 

We will also extend our analysis to the XPINN framework for two subdomains (the extension to more subdomains is straightforward). For this reason, we assume that $D = D_a\cup D_b$, where $D_a$ and $D_b$ are closed with non-overlapping interior $\mathring{D_a}\cap\mathring{D_b}=\emptyset$ and common boundary $\Gamma = D_a\cap D_b$, which we assume to be suitably smooth. We define $\hn_\Gamma$ to point outwards of $D_a$. Figure \ref{fig:xpinn-ab} provides a visualization of this set-up. 

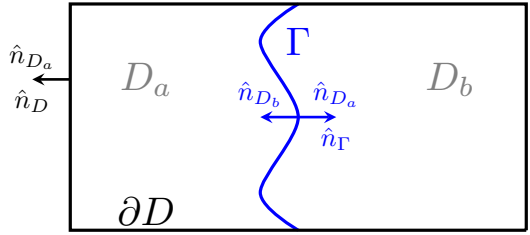
\begin{figure}
    \centering
    \begin{tikzpicture}
    \draw [blue, very thick] plot [smooth ] coordinates {(0,0) (-0.5,0.5)(0,1.5)(-0.5,2.5)(0,3)}; 
    \node[blue, scale=1.5] at (0, 2.5) {$\Gamma$};
    \draw [black, very thick] plot coordinates {(3,0) (-3,0)(-3,3) (3,3) (3,0)};
    \node[gray,scale=1.5] at (-2,2) {$D_a$};
    \node[gray,scale=1.5] at (2,2) {$D_b$};
    \node[black,scale=1.5] at (-2,0.25) {$\partial D$};
    \draw[line width=1pt,blue,-stealth](0,1.5)--(0.5,1.5) node[anchor= south]{$\hn_{D_a}$} node[anchor=north]{$\hn_\Gamma$};
    \draw[line width=1pt,blue,-stealth](0,1.5)--(-0.5,1.5) node[anchor= south]{$\hn_{D_b}$};
    \draw[line width=1pt,black,-stealth](-3,2)--(-3.5,2) node[anchor= south]{$\hn_{D_a}$} node[anchor= north]{$\hn_{D}$};
    \end{tikzpicture}
    \caption{Visualization of the set-up for the XPINN framework with two subdomains. }
    \label{fig:xpinn-ab}
\end{figure}
Following \eqref{eq:xpinn}, the XPINN solution is then defined as 
\begin{equation}\label{eq:xpinn-2subdomains}
    u_\theta = \begin{cases}
      u_\theta^a &\text{in } D_a\setminus \Gamma, \\
      u_\theta^b &\text{in } D_b\setminus \Gamma, \\
      \frac{1}{2}(u_\theta^a +u_\theta^b) &\text{in } \Gamma, 
    \end{cases} \qquad 
     p_\theta = \begin{cases}
      p_\theta^a &\text{in } D_a\setminus \Gamma, \\
      p_\theta^b &\text{in } D_b\setminus \Gamma, \\
      \frac{1}{2}(p_\theta^a +p_\theta^b) &\text{in } \Gamma, 
    \end{cases}
\end{equation}
where $u_\theta^a, u_\theta^b, p_\theta^a, p_\theta^b$ are neural networks. 
In addition to the PINN-related residuals, the following XPINN-related residuals need to be defined, 
\begin{align}\label{eq:new-xpinn-residuals}
    \begin{split}
        \ru &= \max_{j} \abs{(u_\theta^a)_j-(u_\theta^b)_j}, \qquad {\color{black}\rgraduab = \max_{i,j} \abs{\partial_i (u_\theta^a)_j-\partial_i(u_\theta^b)_j}},\\ \rp &= \max_{i,j} \abs{p
        _\theta^a-p_\theta^b}.
    \end{split}
\end{align}
The following theorem then bounds the $L^2$-error of the (X)PINN in terms of the residuals defined above, see also \citep{MM1,UTB1} for versions of the stability argument used below. We write $\abs{\partial D}$, $\abs{\Gamma}$ for the $(d-1)$-dimensional Lebesgue measure of $\partial D$ and $\Gamma$, respectively, and $\abs{D}$ for the $d$-dimensional Lebesgue measure of $D$. 

\begin{theorem}\label{thm:stability}
Let $d\in\mathbb{N}$, $D=\mathbb{T}^d$ and $u\in C^1(D\times [0,T])$ be the classical solution of the Navier-Stokes equation \eqref{eq:navier-stokes}. Let $(u_\theta,p_\theta)$ be a PINN/XPINN with parameters $\theta$, then the resulting $L^2$-error is bounded as follows, 
\begin{align}
\begin{split}
   \int_\Omega \norm{u(x,t)-u_\theta(x,t)}_2^2 dxdt &\leq  \mathcal{C} T \exp(T(2d^2\norm{\nabla u}_{L^\infty(\Omega)}+1)),
\end{split}
\end{align}
where the constant $\mathcal{C}$ is defined as,
\begin{align}
    \begin{split}
     \mathcal{C} = &\: \norm{\rt}^2_{L^2(D)} + \norm{\rpde}^2_{L^2(\Omega)} + C_1\sqrt{T}\bigg[\sqrt{\abs{D}}\norm{\rdiv}_{L^2(\Omega)} + (1+\nu)\sqrt{\abs{\partial D}}\norm{\rs}_{L^2(\partial D\times [0,T])} \\&+ \sqrt{\abs{\Gamma}}\bigg(
     (1+\nu)\norm{\ru}_{L^2(\Gamma\times [0,T])}+\nu{\color{black}\norm{\rgraduab}_{L^2(\Gamma\times [0,T])}}+\norm{\rp}_{L^2(\Gamma\times [0,T])}\bigg)\bigg], 
    \end{split}
\end{align}
and $C_1 = C_1\big(\norm{u}_{C^1},\norm{\hu}_{C^1},\norm{p}_{C^0},\norm{\hp}_{C^0}\big)<\infty$. For PINNs, it holds that $\ru=\rgraduab=\rp=0$. 
\end{theorem}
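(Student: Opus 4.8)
The plan is to run a weighted energy (Grönwall) estimate for the error $\hat{e} := u_\theta - u$, mirroring the classical $L^2$-stability proof for the Navier--Stokes equations; the plain PINN case will come out as the special case of the XPINN argument in which there is no interface. First I would record that $u_\theta$ solves the \emph{perturbed} system $\partial_t u_\theta + (u_\theta\cdot\nabla)u_\theta + \nabla p_\theta - \nu\Delta u_\theta = \rpde$ together with $\div{u_\theta} = \rdiv$, whereas $(u,p)$ solves \eqref{eq:navier-stokes} exactly. Subtracting gives an evolution equation for $\hat{e}$ with forcing $\rpde$ and with $\div{\hat{e}} = \rdiv$, and the crucial algebraic step is to rewrite the nonlinearity as $(u_\theta\cdot\nabla)u_\theta - (u\cdot\nabla)u = (u_\theta\cdot\nabla)\hat{e} + (\hat{e}\cdot\nabla)u$. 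When the equation is tested with $\hat{e}$ and integrated over a subdomain, the first piece is integrated by parts (producing $-\tfrac12\int\rdiv\,\norm{\hat{e}}_2^2$ plus a boundary flux of $u_\theta$), while the second is dominated pointwise by $\norm{\nabla u}_{L^\infty(\Omega)}\norm{\hat{e}}_2^2$; this is exactly why only $\norm{\nabla u}_{L^\infty(\Omega)}$, finite because $u$ is classical, enters the exponent rather than an uncontrolled derivative norm of the network.

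Second, I would test the error equation with $\hat{e}$, integrate over $D_a$ and $D_b$ separately (over $D$ alone in the PINN case), and integrate by parts in the pressure and viscous terms. This leaves: the dissipative term $\nu\int\norm{\nabla\hat{e}}_2^2 \geq 0$, which is discarded; the volume terms $\int\rpde\cdot\hat{e}$, $\int(p_\theta - p)\rdiv$ and $\int\rdiv\,\norm{\hat{e}}_2^2$, bounded via Young's and Cauchy--Schwarz inequalities using a priori $L^\infty$ bounds on $u,u_\theta,p,p_\theta$ (this is where the constant $C_1$ is born); boundary integrals over $\partial D$ of the $u_\theta$-flux of $\norm{\hat{e}}_2^2$, of $(p_\theta - p)(\hat{e}\cdot n)$ and of $\nu(\partial_n\hat{e})\cdot\hat{e}$; and the same three integrals over $\Gamma$, entering with opposite normals from the $D_a$- and $D_b$-sides. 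Since $u$ and $p$ are periodic, the contributions of opposite faces of $\partial D$ cancel up to differences $u_\theta(x)-u_\theta(x+1)=\rsu$, $p_\theta(x)-p_\theta(x+1)=\rsp$, $\nabla u_\theta(x)-\nabla u_\theta(x+1)=\rsgu$, so after integration in time and Cauchy--Schwarz (hence the $\sqrt{T}$) the $\partial D$-terms are controlled by $(1+\nu)\sqrt{\abs{\partial D}}\,\norm{\rs}_{L^2(\partial D\times[0,T])}$. Adding the $D_a$- and $D_b$-contributions on $\Gamma$ leaves the jumps $\ru$, $\rgraduab$, $\rp$, producing the $\sqrt{\abs{\Gamma}}\big((1+\nu)\norm{\ru} + \nu\norm{\rgraduab} + \norm{\rp}\big)$ term.

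Third, collecting everything yields a differential inequality $\frac{d}{dt}\norm{\hat{e}(t)}_{L^2(D)}^2 \leq (2d^2\norm{\nabla u}_{L^\infty(\Omega)}+1)\norm{\hat{e}(t)}_{L^2(D)}^2 + g(t)$, where $g(t)$ collects the residual contributions at time $t$ and $\int_0^T g \leq \mathcal{C} - \norm{\rt}_{L^2(D)}^2$ with $\mathcal{C}$ as in the statement; the precise constant $2d^2\norm{\nabla u}_{L^\infty(\Omega)}+1$ comes out of elementary inequalities applied to the convective and forcing terms. Since $\hat{e}(t=0) = u_\theta(t=0) - u(t=0) = \rt$, the integral form of Grönwall's lemma gives $\norm{\hat{e}(t)}_{L^2(D)}^2 \leq \mathcal{C}\exp\!\big(t(2d^2\norm{\nabla u}_{L^\infty(\Omega)}+1)\big)$ for all $t\in[0,T]$, and integrating this once more over $[0,T]$, using $e^x - 1 \leq x e^x$ for $x\geq 0$, yields the factor $T$ and the claimed estimate. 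For a single PINN one has $u_\theta^a = u_\theta^b = u_\theta$, so $\ru = \rgraduab = \rp = 0$ and $\Gamma$ is absent altogether.

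I expect the main obstacle to be the boundary/interface bookkeeping in the second step: tracking the orientation of the unit normals on the opposite faces of the torus and on the two sides of $\Gamma$, rewriting each boundary flux as a product of an a priori bounded smooth factor and a periodicity/interface residual, and checking that all the resulting prefactors collapse into a single finite constant $C_1 = C_1\big(\norm{u}_{C^1},\norm{u_\theta}_{C^1},\norm{p}_{C^0},\norm{p_\theta}_{C^0}\big)$. The core energy estimate and the Grönwall step are otherwise routine once the nonlinearity has been split as $(u_\theta\cdot\nabla)\hat{e} + (\hat{e}\cdot\nabla)u$.
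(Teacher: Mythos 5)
Your proposal follows essentially the same route as the paper's proof: an energy estimate for the error tested against itself, with the nonlinearity split so that only $\norm{\nabla u}_{L^\infty(\Omega)}$ enters the Grönwall exponent, integration by parts over each subdomain producing boundary fluxes that are converted into the periodicity residuals $\rsu,\rsp,\rsgu$ on $\partial D$ and the jump residuals $\ru,\rgraduab,\rp$ on $\Gamma$, followed by Grönwall's inequality and a final integration in time. The minor presentational differences (writing the convection as $(u_\theta\cdot\nabla)\hat e+(\hat e\cdot\nabla)u$ rather than the paper's three-term split, and invoking $e^x-1\le xe^x$ instead of directly bounding $\int_0^T e^{c\tau}\,d\tau\le Te^{cT}$) do not change the argument.
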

\begin{proof}
Let $\hu=u_{\theta}-u$ and $\hp = p_\theta -p$ denote the difference between the solution of the Navier-Stokes equations and a PINN with parameter vector $\theta$. Using the Navier-Stokes equations \eqref{eq:navier-stokes} and the definitions of the different residuals, we find after a straightforward calculation that,
\begin{align}
    \begin{split}
        &\rpde = \hu_t + (\hu\cdot\nabla)\hu + (u\cdot\nabla)\hu + (\hu\cdot\nabla)u + \nabla\hp - \nu\Delta\hu,\\
        &\rdiv = \div{\hu}, \qquad \rs(x) = u_\theta(x)-u_\theta(x+1), \qquad \rt = \hu(t=0),\\
        &\ru = \max_{j} \abs{(u_\theta^a)_j-(u_\theta^b)_j}, \quad \rgraduab = \max_{i,j} \abs{\partial_i (u_\theta^a)_j-\partial_i(u_\theta^b)_j}, \quad \rp = \max_{i,j} \abs{p
        _\theta^a-p_\theta^b}.
    \end{split}
\end{align}
Next, we recall the following vector equalities,
\begin{equation}
    \hu\cdot \hu_t = \frac{1}{2}\partial_t\norm{u}_2^2, \quad 
    \hu\cdot((\hu\cdot\nabla)\hu) =\frac{1}{2} (\hu\cdot\nabla)\norm{\hu}_2^2, \quad
    \hu\cdot((u\cdot\nabla)\hu) =\frac{1}{2}(u\cdot\nabla)\norm{\hu}_2^2. 
\end{equation}
We take the inner product of the first equation in \eqref{eq:navier-stokes} and $\hu$, and use the previous vector inequalities to obtain,
\begin{equation}\label{eq:stab1}
    \frac{1}{2}\partial_t\norm{\hu}_2^2 + \frac{1}{2} (\hu\cdot\nabla)\norm{\hu}_2^2 + \frac{1}{2}(u\cdot\nabla)\norm{\hu}_2^2 + \hu\cdot((\hu\cdot\nabla)u) + (\hu\cdot\nabla)\hp -\nu\hu\cdot\Delta\hu= \hu\cdot \rpde
\end{equation}
Now let $\Lambda\subset D$ be such that $\partial \Lambda$ is piecewise smooth with outward normal vector $\hn_\Lambda$. Denote by $T_\Lambda$ the corresponding trace operator. Integrating \eqref{eq:stab1} over $\Lambda$ and integrating by parts yields, 
\begin{align}\label{eq:stab2}
    \begin{split}
        \frac{d}{dt}\int_\Lambda \norm{\hu}_2^2 dx =& \int_\Lambda \rdiv (\norm{\hu}_2^2+2\hp)dx - \int_{\partial \Lambda} T_\Lambda(\hu)\cdot \hn_\Lambda(\norm{\hu}_2^2+2\hp)ds(x) \\
        &-2\int_\Lambda \hu\cdot((\hu\cdot\nabla)u)dx -2 \nu\sum_{j=1}^d \int_\Lambda \norm{\nabla \hu_j }_2^2 dx \\
        &+ 2\nu\sum_{j=1}^d\int_{\partial\Lambda} T_\Lambda(\hu_j) (\hn_\Lambda\cdot T_\Lambda(\nabla \hu_j)) ds(x) + 2\int_\Lambda \hu\cdot \rpde \:dx.
    \end{split}
\end{align}
The use of the trace operator $T_\Lambda$ is necessary since the trace of $\hu$ on $\partial \Lambda$ might not agree with the actual definition of $\hu$ as in  \eqref{eq:xpinn-2subdomains}.
We then find
\begin{align}\label{eq:stab3}
    \begin{split}
&\sum_{i=a}^b \int_{\partial D_i} T_{D_i}(\hu^i)\cdot \hn_{D_i}(\norm{\hu^i}_2^2+2\hp^i)ds(x) - \int_{\partial D} \hu\cdot  \hn_D(\norm{\hu}_2^2+2\hp)ds(x)\\
&=  \int_{\Gamma} \hu^a\cdot  \hn_\Gamma(\norm{\hu^a}_2^2+2\hp^a)ds(x) - \int_{\Gamma} \hu^b\cdot  \hn_\Gamma(\norm{\hu^b}_2^2+2\hp^b)ds(x)\\
&= \int_{\Gamma} (u_\theta^a-u_\theta^b)\cdot  \hn_\Gamma(\norm{\hu^a}_2^2+2\hp^a)ds(x) + \int_{\Gamma} \hu^b\cdot  \hn_\Gamma(\norm{\hu^a}_2^2-\norm{\hu^b}_2^2+2(p_\theta^a-p_\theta^b))ds(x)
    \end{split}
\end{align}
And similarly, 
\begin{align}\label{eq:stab4}
    \begin{split}
&\sum_{i=a}^b \int_{\partial D_i} T_{D_i}(\hu_j^i) (\hn_{D_i}\cdot T_{D_i}(\nabla \hu_j^i)) ds(x) - \int_{\partial D} \hu_j (\hn_D\cdot \nabla \hu_j) ds(x)\\
&= \int_{\Gamma} \hu_j^a (\hn_\Lambda\cdot \nabla \hu_j^a) ds(x) - \int_{\Gamma} \hu_j^b (\hn_\Lambda\cdot \nabla \hu_j ^b) ds(x)\\
&= \int_{\Gamma} ((u_\theta^a)_j-(u_\theta^b)_j) (\hn_\Lambda\cdot \nabla \hu_j^a) ds(x) - \int_{\Gamma} \hu_j^b (\hn_\Lambda\cdot \nabla ((u_\theta^a)_j-(u_\theta^b)_j)) ds(x)
\end{split}
\end{align}
Moreover, we calculate that for a constant $C_1\big(\norm{u}_{C^1},\norm{\hu}_{C^1},\norm{p}_{C^0},\norm{\hp}_{C^0}\big)$ it holds that,
\begin{align}\label{eq:stab5}
\begin{split}
    &-\int_D \hu\cdot((\hu\cdot\nabla)u)dx \leq d^2\norm{\nabla u}_{L^\infty(\Omega)} \int_D \norm{\hu}^2_2dx,\\
    & \abs{\int_{\partial D} \hu\cdot  \hn_D(\norm{\hu}_2^2+2\hp)ds(x)} \leq C_1 \left(\norm{\rsu}_{L^1(\partial D)}+\norm{\rsp}_{L^1(\partial D)}\right),\\
    & \int_{\partial D} \hu_j (\hn_D\cdot \nabla \hu_j) ds(x) \leq C_1 \left(\norm{\rsu}_{L^1(\partial D)}+\norm{\rsgu}_{L^1(\partial D)}\right),\\
\end{split}
\end{align}
where $\Omega = D\times [0,T]$. Now, summing \eqref{eq:stab1} over the different $\Lambda = D_i$, integrating over the interval $[0,\tau]\subset [0,T]$ and using \eqref{eq:stab2}, \eqref{eq:stab3}, \eqref{eq:stab4} we find that,
\begin{align}\label{eq:stab6}
\begin{split}
    \int_D \norm{\hu(x,\tau)}_2^2 dx  \leq&\: \norm{\rt}^2_{L^2(D)} + C_1\sqrt{T\abs{D}}\norm{\rdiv}_{L^2(\Omega)} + C_1 (1+\nu)\sqrt{T\abs{\partial D}}\norm{\rs}_{L^2(\partial D\times [0,T])} \\&+ C_1(1+\nu)\sqrt{T\abs{\Gamma}}\max_{j}\norm{(u_\theta^a)_j-(u_\theta^b)_j}_{L^2(\Gamma\times [0,T])} \\ &+ C_1\sqrt{T\abs{\Gamma}} \norm{p_\theta^a-p_\theta^b}_{L^2(\Gamma\times [0,T])} +  2d^2\norm{\nabla u}_{L^\infty(\Omega)} \int_{D\times [0,\tau]} \norm{\hu(x,t)}_2^2 dxdt \\
    &+  C_1\nu \sqrt{T\abs{\Gamma}} \max_{i,j}\norm{\partial_i (u_\theta^a)_j-\partial_i(u_\theta^b)_j}_{L^2(\Gamma\times [0,T])} \\&+ \norm{\rpde}^2_{L^2(\Omega)} + \int_{D\times [0,\tau]} \norm{\hu(x,t)}_2^2 dxdt, 
\end{split}
\end{align}
where $\rs = (\rsu, \rsp, \rsgu)$ as in \eqref{eq:new-pinn-residuals}. 
Using Grönwall's inequality and integrating over $[0,T]$, we find that,
\begin{align}
\begin{split}
   \int_\Omega \norm{\hu(x,t)}_2^2 dxdt &\leq  \mathcal{C} T \exp(T(2d^2\norm{\nabla u}_{L^\infty(\Omega)}+1)),
\end{split}
\end{align}
where the constant $\mathcal{C}$ is defined as,
\begin{align}
    \begin{split}
     \mathcal{C} = &\: \norm{\rt}^2_{L^2(D)} + \norm{\rpde}^2_{L^2(\Omega)} + C_1\sqrt{T}\bigg[\sqrt{\abs{D}}\norm{\rdiv}_{L^2(\Omega)} + (1+\nu)\sqrt{\abs{\partial D}}\norm{\rs}_{L^2(\partial D\times [0,T])} \\&+ \sqrt{\abs{\Gamma}}\bigg(
     (1+\nu)\norm{\ru}_{L^2(\Gamma\times [0,T])}+\nu\norm{\rgraduab}_{L^2(\Gamma\times [0,T])}+\norm{\rp}_{L^2(\Gamma\times [0,T])}\bigg)\bigg] .
    \end{split}
\end{align}
\end{proof}

\begin{remark}
Although the existence of a $C^1$ solution of the Navier-Stokes solution is guaranteed by Theorem \ref{thm:NS-existence}, it is still possible that $\norm{\nabla u}_{L^\infty(\Omega)}$ becomes very large, e.g. for complicated solutions characterized by strong vorticity \citep{MM1}. In such a case, Theorem \ref{thm:stability} indicates that the generalization error might be large. 
\end{remark}

\begin{remark}
For PINNs, the $L^2$-error is bounded uniquely in terms of residuals that are a part of the PINN generalization error \eqref{eq:generalization-error-pinn}. This implies that for neural networks with a small PINN loss the corresponding $L^2$-error will be small as well, provided that the $C^1$-norm of the network does not blow up. This affirmatively answers question Q2. For XPINNs, we can see that the XPINN-specific residuals $\ru$ and $\rp$ (as defined in \eqref{eq:new-xpinn-residuals}) are equivalent with the $\ru$ residual in the XPINN generalization error \eqref{eq:generalization-error-xpinn}. 
{\color{black}The residual $\rgraduab$ however does not show up in the original XPINN framework, and should therefore be added to the XPINN loss function \eqref{eq:generalization-error-xpinn} to theoretically guarantee a small $L^2$-error. }
\end{remark}

\begin{remark}
Variants of Theorem \ref{thm:stability} for different kinds of boundary conditions can be proven in the same way as above. For example, the statement from Theorem \ref{thm:stability} still holds for no-slip boundary conditions i.e., $u(x,t)=0$ for all $(x,t)\in \partial D\times [0,T]$, if one defined the spatial boundary residual as $\rs = u_\theta$. 
\end{remark}

\begin{remark}
Although the focus in this paper lies on solving the Navier-Stokes equations for the velocity, we want to note that one can also prove a stability result for $\norm{p-p_\theta}_{L^2(\Omega)}$ in a similar spirit to Theorem \ref{thm:stability}. The main steps consists of taking the divergence of the Navier-Stokes equations, using the identity \eqref{eq:poisson-p} and rewriting the result in terms of the different residuals. 
\end{remark}

The existence of a PINN (XPINN) with an arbitrarily small $L^2$-error is a simple byproduct of the proof of Theorem \ref{thm:pinn-approx}. For completeness, we show that one can also use Theorem \ref{thm:stability} to obtain a quantitative convergence result on the $L^2$-error of the PINN approximation of the solution of the Navier-Stokes equation in terms of the number of neurons of the neural network.

\begin{corollary}\label{cor:L2-pinn}
Let $n\geq 2$, $d,r,k\in\mathbb{N}$, where $k\geq 3$
and let $u_0\in H^r(\mathbb{T}^d)$ with $r>\frac{d}{2}+2k$ and $\div{u_0}=0$. It holds that: 
\begin{itemize}
    \item there exist $T>0$ and a classical solution $u$ to the Navier-Stokes equations such that $u\in H^{k}(\Omega)$, $\nabla p\in H^{k-1}(\Omega)$, $\Omega = \mathbb{T}^d\times [0,T]$, and $u(t=0)=u_0$,  
    \item there exist constants $C, \beta>0$ such that for every $N\in \mathbb{N}$, there exist tanh neural networks $\hu_j$, $1\leq j\leq d$, and $\widehat{p}$, each with two hidden layers, of widths $3\left\lceil\frac{k+n-2}{2}\right\rceil\binom{d+k-1}{d}+\lceil TN\rceil +dN$ and $3\left\lceil\frac{d+n}{2}\right\rceil \binom{2d+1}{d}\lceil TN \rceil N^{d}$, such that for every $1\leq j\leq d$, 
    \begin{equation}
        \norm{u-\hu}_{L^2(\Omega)} \leq C \ln^\kappa(\beta N) N^{\frac{-k+1}{2}}.
    \end{equation}
    The value of $C>0$ follows from the proof, $\beta>0$ and the network weight growth are as in Theorem \ref{thm:pinn-approx}, and $\kappa=2$ for $k=3$ and $\kappa=\frac{1}{2}$ for $k\geq 4$.
\end{itemize}
\end{corollary}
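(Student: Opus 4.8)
The plan is to combine the residual bounds of Theorem~\ref{thm:pinn-approx} with the stability estimate of Theorem~\ref{thm:stability}. First I would fix $N>5$ — the finitely many cases $N\le 5$ being absorbed into the constant $C$ — and invoke Theorem~\ref{thm:pinn-approx} for the given $n,d,r,k$. This produces the tanh networks $\hu_j$ ($1\le j\le d$) and $\widehat p$, each with two hidden layers and exactly the widths stated in the corollary, satisfying \eqref{eq:bound-pinn-res-1}--\eqref{eq:bound-pinn-res-3}, with weights of size $\bigO(N^\gamma\ln N)$ and the same $\beta$ as in Theorem~\ref{thm:pinn-approx}; these quantities carry over verbatim. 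Since the spatial building blocks of the construction form an approximate \emph{periodic} partition of unity on $\mathbb T^d$ (reflected in the summand $dN$ of the first width), $\hu$ and $\widehat p$ are periodic in $x$, so the periodic boundary residuals $\rsu$, $\rsp$, $\rsgu$ vanish identically; for a PINN the interface residuals $\ru$, $\rgraduab$, $\rp$ are zero as well.

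Next I would apply Theorem~\ref{thm:stability} to $(u_\theta,p_\theta)=(\hu,\widehat p)$. With every boundary and interface residual vanishing, the constant there reduces to $\mathcal C = \norm{\rt}_{L^2(D)}^2 + \ltwo{\rpde}^2 + C_1\sqrt{T\abs{D}}\,\ltwo{\rdiv}$, where $C_1$ depends only on $\norm{u}_{C^1}$, $\norm{p}_{C^0}$, $\norm{\hu}_{C^1}$ and $\norm{\hp}_{C^0}$; the latter two remain bounded uniformly in $N$ thanks to the $W^{\ell,\infty}$-approximation bounds provided by Theorem~\ref{thm:tanh-approximation}, so that $C_1$ joins the $N$-independent factors. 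Substituting \eqref{eq:bound-pinn-res-1}--\eqref{eq:bound-pinn-res-3} gives $\mathcal C \lesssim \ln^2(\beta N)N^{-2k+2} + \ln^4(\beta N)N^{-2k+4} + \ln(\beta N)N^{-k+1}$, and Theorem~\ref{thm:stability} then bounds $\ltwo{u-\hu}$ above by $\sqrt{\mathcal C\,T\exp(T(2d^2\norm{\nabla u}_{L^\infty(\Omega)}+1))}$, with the exponential and $\sqrt T$ absorbed into $C$.

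It then remains to identify the dominant term in $\mathcal C$. For $k\ge 4$ the linear divergence contribution $N^{-k+1}$ dominates both quadratic terms (since $-k+1>-2k+4$ once $k>3$), so $\mathcal C\lesssim\ln(\beta N)N^{-k+1}$ and hence $\ltwo{u-\hu}\lesssim \ln^{1/2}(\beta N)N^{(-k+1)/2}$, i.e.\ $\kappa=1/2$. For $k=3$ the divergence term and the squared PDE residual are both of order $N^{-2}$, and the larger logarithmic power prevails, giving $\mathcal C\lesssim \ln^4(\beta N)N^{-2}$ and therefore $\ltwo{u-\hu}\lesssim \ln^2(\beta N)N^{-1}$, i.e.\ $\kappa=2$; in both cases the algebraic rate is $N^{(-k+1)/2}$. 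I expect the only genuinely delicate point to be the spatial periodicity of the constructed networks, which is exactly what forces $\rsu=\rsp=\rsgu=0$: if instead one merely controls $\norm{\rsgu}$ through the multiplicative trace inequality together with the $H^1$- and $H^2$-approximation bounds, the resulting boundary term is of order $\ln^2(\beta N)N^{-k+2}$ — larger than $N^{-k+1}$ — which would spoil the stated rate. Hence making the construction (exactly) periodic in $x$ is the crux; everything else is bookkeeping of constants and logarithms.
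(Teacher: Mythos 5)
Your proposal is correct and follows exactly the route the paper takes: its entire proof of this corollary is the single sentence that it is ``a direct consequence of Theorem \ref{thm:stability} and Theorem \ref{thm:pinn-approx} and its proof,'' and your bookkeeping of the dominant terms in $\mathcal{C}$ (the squared PDE residual $\ln^4(\beta N)N^{-2k+4}$ versus the linearly-entering divergence term $\ln(\beta N)N^{-k+1}$) correctly reproduces $\kappa=2$ for $k=3$ and $\kappa=\tfrac12$ for $k\geq 4$. The one place you go beyond the paper is your explicit treatment of the spatial-boundary residual $\rs$ (in particular $\rsgu$, which via the trace inequality would only give $\ln^2(\beta N)N^{-k+2}$ and spoil the rate unless the construction is periodic); the paper's one-line proof does not address this term at all, so your flagging of periodicity as the crux is a legitimate and sharper reading of what the argument actually requires.
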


\begin{proof}
The corollary is a direct consequence of Theorem \ref{thm:stability} and Theorem \ref{thm:pinn-approx} and its proof.
\end{proof}

\subsection{Bounds on the total error in terms of training error.}\label{sec:33}
Next, we answer the question Q3, raised in the introduction, by providing a bound of the generalization error in terms of the training error and the size of the training set $\S$, where $u_{\theta^*(\S)}$ is the PINN that minimizes the training loss. Combined with Theorem \ref{thm:stability}, it will enable us to bound the total error (the $L^2$-mismatch between the exact solution of \eqref{eq:navier-stokes} and the trained PINN) in terms of the training error and size of the training set. 

As already announced in Section \ref{sec:quad}, we will focus on training sets obtained using the \emph{midpoint rule} $\qu{M}$ for simplicity. 
For $f\in\{\rpde^2, \rdiv^2\}$ and $\Lambda = \Omega = D\times [0,T]$ we obtain the quadrature $\qu{M}^\text{int}$, for $f=\rt^2$ and $\Lambda = D$ we obtain the quadrature $\qu{M}^t$ and for $f=\rs^2$ and $\Lambda = \partial D\times [0,T]$ we obtain the quadrature $\qu{M}^{s}$. For XPINNs, one additionally needs to consider the quadrature $\qu{M}^\Gamma$ obtained for $f\in\{\ru^2, \rgraduab^2, \rp^2\}$ and $\Lambda = \Gamma $. 

This notation allows us to write the PINN loss \eqref{eq:training-loss-pinn} in a compact manner,
\begin{equation}
    \label{eq:etrain}
\begin{aligned}
    \Et(\theta,\S)^2 &= \Et^\pde(\theta,\S_\inte)^2 +  \Et^\divv(\theta,\S_\inte)^2 +\Et^s(\S_s)^2+ \Et^t(\theta,\S_t)^2,\\
    &= \qu{M_{\text{int}}}^\text{int}[\rpde^2] + \qu{M_{\text{int}}}^\text{int}[\rdiv^2]+  \qu{M_{s}}^s[\rs^2] +  \qu{M_{t}}^t[\rt^2],
\end{aligned}
\end{equation}
Using this notation and Theorem \ref{thm:stability} from the previous section, we obtain the following theorem that bounds the $L^2$-error of a neural network in terms of the training loss and the number of training points. In particular, it applies to the trained PINN $u_{\theta^*(\S)}$. 

\begin{theorem}\label{thm:generalization}
Let $T>0$, $d\in\mathbb{N}$, let $(u,p)\in C^4(\mathbb{T}^d\times [0,T])$ be the classical solution of the Navier-Stokes equation \eqref{eq:navier-stokes} and let $(u_\theta,p_\theta)$ be a PINN with parameters $\theta\in\Theta_{L,W,R}$ (cf. Definition \ref{def:nn}). Then the following error bound holds,
\begin{equation}
    \label{eq:etrain1}
\begin{aligned}
   \int_\Omega \norm{u(x,t)-u_\theta(x,t)}_2^2 dxdt &\leq  \mathcal{C}(M) T \exp(T(2d^2\norm{\nabla u}_{L^\infty(\Omega)}+1))\\
   &=  \bigO\left(\Et(\theta,\S)^2 + M_t^{-\frac{2}{d}} + M_\mathrm{int}^{-\frac{1}{d+1}} + M_s^{-\frac{1}{d}}\right).
\end{aligned}
\end{equation}
In the above formula, the constant $\mathcal{C}(M)$ is defined as,
\begin{align}
    \begin{split}
     \mathcal{C}(M) = &\: \Et^t(\theta,\S_t)^2 + C_t M_t^{-\frac{2}{d}} + \Et^\pde(\theta,\S_\inte)^2 +C_\pde M_\mathrm{int}^{-\frac{2}{d+1}}\\&+ C_1{T}^{\frac{1}{2}}\bigg[\Et^\divv(\theta,\S_\inte) +C_\divv M_\mathrm{int}^{-\frac{1}{d+1}}+(1+\nu) \big( \Et^s(\theta,\S_s) + C_s M_s^{-\frac{1}{d}}\big)\bigg],\\
    \end{split}
\end{align}
and where,
\begin{align}\label{eq:bounds-constants}
    \begin{split}
        C_1 &\lesssim \norm{u}_{C^1} + \norm{p}_{C^0} + \norm{\hu}_{C^1} + \norm{\hp}_{C^0}\\
        &\lesssim \norm{u}_{C^1} + \norm{p}_{C^0} + (d+1)^2 \left(16e^2 W^3R \norm{\sigma}_{C^1}\right)^{L},\\
        C_t &\lesssim \norm{u}_{C^2}^2 + \norm{\hu}_{C^2}^2 \lesssim \norm{u}_{C^2}^2 + \left(e^2 2^6 W^3R^2 \norm{\sigma}_{C^2}\right)^{2L},\\
        C_\pde &\lesssim  \norm{\hu_j}_{C^4}^2 \lesssim\left(2e^2 4^4 W^3R^4 \norm{\sigma}_{C^4}\right)^{4L},\\
        C_\divv, C_s &\lesssim \norm{\hu_j}_{C^3} \lesssim\left(4e^2 3^4 W^3R^3 \norm{\sigma}_{C^3}\right)^{3L/2}.
    \end{split}
\end{align}
\end{theorem}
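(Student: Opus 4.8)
The starting point is Theorem~\ref{thm:stability}, which already bounds $\int_\Omega \norm{u-u_\theta}_2^2\,dxdt$ by $\mathcal{C}T\exp(T(2d^2\norm{\nabla u}_{L^\infty}+1))$ with $\mathcal{C}$ a combination of $L^2$-norms of the PINN residuals $\rt,\rpde,\rdiv,\rs$ (the XPINN residuals vanish here). The task is therefore to pass from these \emph{continuous} $L^2$-norms of residuals to the \emph{discrete} training loss $\Et(\theta,\S)^2$ in \eqref{eq:etrain}, at the cost of explicit quadrature errors. For each residual $f\in\{\rt^2,\rpde^2,\rdiv^2,\rs^2\}$ I would write $\norm{f^{1/2}}^2_{L^2(\Lambda)} = \int_\Lambda f = \qu{M}^\Lambda[f] + \big(\int_\Lambda f - \qu{M}^\Lambda[f]\big)$ and bound the second term using the midpoint-rule estimate \eqref{eq:quad-error}, namely $\abs{\int_\Lambda f - \qu{M}^\Lambda[f]} \leq C_f M^{-2/\dim(\Lambda)}$ with $C_f\lesssim\norm{f}_{C^2}$. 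Since $\rt^2$ lives on $D$ ($d$-dimensional), $\rpde^2$ and $\rdiv^2$ on $\Omega$ ($(d+1)$-dimensional), and $\rs^2$ on $\partial D\times[0,T]$ ($d$-dimensional), this produces exactly the exponents $M_t^{-2/d}$, $M_{\mathrm{int}}^{-2/(d+1)}$, $M_s^{-2/d}$ appearing in $\mathcal{C}(M)$. For the terms entering $\mathcal{C}$ linearly rather than quadratically (the $\rdiv$ and $\rs$ contributions, which come with a single power), I would instead split $\norm{\rdiv}_{L^2(\Omega)} = \sqrt{\qu{M}^\text{int}[\rdiv^2] + (\text{quad.\ error})} \leq \Et^\divv(\theta,\S_\inte) + \sqrt{C_{\divv}} M_{\mathrm{int}}^{-1/(d+1)}$ using $\sqrt{a+b}\leq\sqrt a+\sqrt b$, which explains the halved exponents $M_{\mathrm{int}}^{-1/(d+1)}$, $M_s^{-1/d}$ and the square-root constants in \eqref{eq:bounds-constants}.

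\textbf{Bounding the quadrature constants.} The constants $C_f$ from \eqref{eq:quad-error} scale like $\norm{f}_{C^2}$ where $f$ is a residual squared, so e.g.\ $C_\pde \lesssim \norm{\rpde^2}_{C^2} \lesssim \norm{\rpde}_{C^2}^2$, and since $\rpde$ involves second derivatives of $u_\theta$ composed with the nonlinearity, $\norm{\rpde}_{C^2}\lesssim \norm{u_\theta}_{C^4}^2$ (the quadratic nonlinearity $u\cdot\nabla u$ forces a product, hence the square); similarly $C_t\lesssim\norm{u-u_\theta}_{C^2}^2\lesssim \norm{u}_{C^2}^2 + \norm{u_\theta}_{C^2}^2$ and $C_\divv,C_s\lesssim\norm{u_\theta}_{C^3}$ via the $\sqrt{\cdot}$ split. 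To make these explicit in $W,R,L$, I would invoke a standard bound on the $C^n$-norm of a network in terms of its depth, width and weight bound: each layer contributes a factor of order $W\cdot R\cdot\norm{\sigma}_{C^n}$ (times a combinatorial constant from Faà di Bruno / the chain rule for the $n$-th derivative of an $L$-fold composition), giving $\norm{u_\theta}_{C^n}\lesssim (c_n W^3 R^n \norm{\sigma}_{C^n})^{nL/?}$-type expressions; the precise numerical constants ($16e^2$, $2^6$, $4^4$, $3^4$, etc.) come from carefully tracking the multinomial coefficients in the higher-order chain rule, which is why $C_1$ needs $C^1$ control, $C_t$ needs $C^2$, $C_\divv,C_s$ need $C^3$, and $C_\pde$ needs $C^4$. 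This is presumably a lemma proved in an appendix (the paper refers to such derivative bounds elsewhere), so I would cite it rather than re-derive it.

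\textbf{Assembling.} Plugging all these splits into the expression for $\mathcal{C}$ in Theorem~\ref{thm:stability} yields $\mathcal{C}\leq\mathcal{C}(M)$ with $\mathcal{C}(M)$ exactly as displayed, and multiplying through by $T\exp(T(2d^2\norm{\nabla u}_{L^\infty}+1))$ gives the first line of \eqref{eq:etrain1}. The $\bigO$ statement on the second line then follows by noting that $\mathcal{C}(M)$ is, up to the ($\theta$- and $M$-independent, though network-size-dependent) constants $C_1,C_t,C_\pde,C_\divv,C_s$, a sum of the training-loss pieces and the quadrature-error terms $M_t^{-2/d}$, $M_{\mathrm{int}}^{-1/(d+1)}$ (the dominant, i.e.\ largest, of the two interior exponents since $2/(d+1)>1/(d+1)$, so the $-1/(d+1)$ term absorbs the $-2/(d+1)$ term), and $M_s^{-1/d}$, while $\Et^\pde(\theta,\S_\inte)^2 + \Et^\divv(\theta,\S_\inte) + \Et^s(\theta,\S_s) + \Et^t(\theta,\S_t)^2 = \bigO(\Et(\theta,\S)^2)$ up to the same caveat (the linear-in-$\Et^s,\Et^\divv$ terms are dominated by constants times $\Et(\theta,\S)$, hence by $\Et(\theta,\S)^2 + \text{const}$, but here it is cleanest to just keep $\mathcal{C}(M)$ as the honest bound and read off the $\bigO$ informally).

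\textbf{Main obstacle.} The conceptually routine but technically delicate step is the explicit derivative-norm bound for deep tanh networks: getting the $n$-th derivative of an $L$-layer composition under control requires the multivariate Faà di Bruno formula, and tracking the constants so that they come out as clean powers like $(16e^2 W^3 R\norm{\sigma}_{C^1})^L$ rather than something messier is the part that demands care; everything else is bookkeeping with $\sqrt{a+b}\leq\sqrt a+\sqrt b$, the midpoint quadrature estimate, and Gr\"onwall (already done in Theorem~\ref{thm:stability}).
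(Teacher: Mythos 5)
Your proposal matches the paper's proof: the authors likewise combine Theorem \ref{thm:stability} with the midpoint quadrature estimate \eqref{eq:quad-error} (using the $\sqrt{a+b}\leq\sqrt{a}+\sqrt{b}$ split for the linearly-appearing residuals, hence the halved exponents), bound the quadrature constants via $\norm{\mathcal{R}_q^2}_{C^2}\lesssim\norm{\mathcal{R}_q}_{C^2}^2$ from the Leibniz rule, and cite the appendix result (Lemma \ref{lem:nn-cn-bound}) for the explicit $C^n$-norm bounds of the network in terms of $W$, $R$, $L$. This is essentially the same argument, correctly reconstructed.
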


\begin{proof}
The main error estimate of the theorem follows directly from combining Theorem \ref{thm:stability} with the quadrature error formula \eqref{eq:quad-error}. The complexity of $C_1$ follows from Theorem \ref{thm:stability} and Lemma \ref{lem:nn-cn-bound}, which states that
\begin{equation}
    \norm{\hu_j}_{C^n} \leq 16^L(d+1)^{2n} \left(e^2 n^4 W^3R^n \norm{\sigma}_{C^n}\right)^{nL}
\end{equation}
for $n\in\mathbb{N}$, all $j$ and similarly for $\hp$. The complexities of the other constants then follow from this formula and the observation that for every residual $\mathcal{R}_q$ it holds that $ \norm{\mathcal{R}_q^2}_{C^n} \leq 2^n \norm{\mathcal{R}_q}_{C^n}^2$ (from the general Leibniz rule). For instance, we obtain in this way that
\begin{equation}
    C_t \lesssim \norm{\mathcal{R}_t^2}_{C^2} \lesssim  \norm{u}_{C^2}^2 + \norm{\hu}_{C^2}^2 \lesssim \norm{u}_{C^2}^2 + \left(e^2 2^6 W^3R^2 \norm{\sigma}_{C^2}\right)^{2L}. 
\end{equation}

In a similar way, one can calculate that
\begin{equation}
    C_\pde \lesssim \norm{\mathcal{R}_\pde^2}_{C^2} \lesssim \norm{\hu}_{C^4}^2 \lesssim  \left(2e^2 4^4 W^3R^4 \norm{\sigma}_{C^4}\right)^{4L}.
\end{equation}
Finally, we find that 
\begin{equation}
    C_s^2, C_\divv^2 \lesssim \norm{\mathcal{R}_s^2}_{C^2}, \norm{\mathcal{R}_\divv^2}_{C^2} \lesssim \norm{\hu}_{C^3}^2 \lesssim \left(4e^2 3^4 W^3R^3 \norm{\sigma}_{C^3}\right)^{3L}. 
\end{equation}
\end{proof}

\begin{remark}
For XPINNs, an entirely analogous result can be proven using the same approach. 
\end{remark}

\begin{remark}
The upper bounds on the constants in \eqref{eq:bounds-constants} depend polynomially on the network width $W$ but exponentially on the network depth $L$. These bounds seem to suggest that one might expect a smaller $L^2$-error for a rather shallow (but wide) network than for a very deep network. In Theorem \ref{thm:pinn-approx}, we have already proven explicit error bounds for a neural network with only two hidden layers. It is important to note that the constants in \eqref{eq:bounds-constants} can be estimated from the network used in practice, and that there is no need to use the (over)estimates from Theorem \ref{thm:pinn-approx} in this case. 
\end{remark}

\begin{remark}
If we assume that the optimization algorithm used to minimize the training loss i.e., to solve \eqref{eq:opt}, finds a global minimum, then one can prove that the training error in Theorem \ref{thm:generalization} is small if the training set and hypothesis space is large enough.  
To see this, first fix an error tolerance $\epsilon$ and observe that for the network $\hu$ that was constructed in Theorem \ref{thm:pinn-approx} it holds that all relevant PINN residuals and therefore also the generalization error $\Eg(\theta_{\hu})$ \eqref{eq:generalization-error-pinn} are of order $\bigO(\varepsilon)$.
If one then constructs the training set such that $M_\mathrm{int} \sim \epsilon^{-\frac{d+1}{2}}$ and $M_t \sim M_s \sim \epsilon^{-\frac{d}{2}}$ then it holds that $\Et^q(\theta_\Psi) \leq \epsilon + \Eg^q(\theta_\Psi)$ for $q \in\{s,t,\mathrm{div}, \mathrm{PDE}\}$ and as a consequence that $ \Et(\theta_{\hu},\S) = \bigO(\epsilon)$. 
If the optimization algorithm reaches a global minimum, the training loss of $u_{\theta^*(\S)}$ will be upper bounded by that of $\hu$. Therefore it also holds that $ \Et(\S) = \bigO(\epsilon)$. 
\end{remark}

{\color{black}

With these remarks in place, we now discuss how Theorem \ref{thm:generalization} answers question Q3, raised in the introduction, which asked whether the generalization error $\Eg(\theta^*)$ will be small if the training error is small and the training set is sufficiently large. 

First of all, Theorem \ref{thm:generalization} can be used as an \emph{a posteriori} error estimate for the trained PINN as \eqref{eq:etrain1} shows that the generalization error can be upper bounded in terms of the various PINN-related residuals, as well as the $C^k$-norms of the trained neural network and the training set sizes. The power of  \eqref{eq:etrain1} as an a posteriori error estimate will be demonstrated with numerical experiments in Section \ref{sec:4}. 

Next, we combine Theorem \ref{thm:pinn-approx} with Theorem \ref{thm:generalization} to prove an \emph{a priori} error estimate. The following result states that the total $L^2$-error of the trained PINN $\Vert u-u_{\theta^*(\S)}\Vert_{L^2(D\times [0,T])}$ can be made arbitrarily small if the optimization procedure for \eqref{eq:opt} leads to a global minimum, the training set and the underlying spaces of neural networks are sufficiently large, and the solution $u$ is sufficiently smooth. 

\begin{corollary}
Let $\epsilon>0$, $T>0$, $d\in\mathbb{N}$, $k>6(3d+8) =: \gamma$, let $(u,p)\in H^k(\mathbb{T}^d\times [0,T])$ be the classical solution of the Navier-Stokes equation \eqref{eq:navier-stokes}, let the hypothesis space $\Theta$ satisfy $R \geq \epsilon^{-1/(k-\gamma)}\ln(1/\epsilon) $, $W \geq \epsilon^{-(d+1)/(k-\gamma)}$ and $L \geq 3$, and let $(u_{\theta^*(\S)},p_{\theta^*(\S)})$ be the PINN that solves \eqref{eq:opt} where the training set $\S$ satisfies  $M_t \geq \epsilon^{-d(1+\gamma/(k-\gamma))}$, $M_\inte \geq \epsilon^{-2(d+1)(1+\gamma/(k-\gamma))}$ and $M_s \geq \epsilon^{-2d(1+\gamma/(k-\gamma))}$. It holds that
\begin{equation}
    \Vert u-u_{\theta^*(\S)}\Vert_{L^2(D\times [0,T])} = \bigO(\epsilon).
\end{equation}
\end{corollary}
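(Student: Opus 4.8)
The plan is to chain Theorem~\ref{thm:pinn-approx}, the minimality of $\theta^*(\S)$ for the training loss, and the a posteriori estimate of Theorem~\ref{thm:generalization}, with every free quantity tied to an explicit power of $\epsilon$. Write $\gamma:=6(3d+8)$ and set $N:=\lceil\epsilon^{-1/(k-\gamma)}\rceil$, which is well defined and $\to\infty$ as $\epsilon\to0$ since $k>\gamma$. In Theorem~\ref{thm:pinn-approx} I would pick the free parameter $n$ large enough (e.g. $n\ge d(2+k^2+d)$) that the weight-growth exponent there is $1$; this produces tanh networks $\hat u_1,\dots,\hat u_d,\hat p$, each with two hidden layers (so exactly three layers in the sense of Definition~\ref{def:nn}), of width $\bigO(N^{d+1})=\bigO(\epsilon^{-(d+1)/(k-\gamma)})$ and with all weights of size $\bigO(N\ln N)=\bigO(\epsilon^{-1/(k-\gamma)}\ln(1/\epsilon))$. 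For $\epsilon$ small, the hypotheses $W\ge\epsilon^{-(d+1)/(k-\gamma)}$, $R\ge\epsilon^{-1/(k-\gamma)}\ln(1/\epsilon)$ and $L\ge3$ then guarantee $\theta_{\hat u}\in\Theta_{L,W,R}$, while Theorem~\ref{thm:pinn-approx} gives $\Eg(\theta_{\hat u})=\bigO(\ln^2(\beta N)N^{-k+2})$, the periodic boundary residual being handled along the same lines via the trace inequality (Lemma~\ref{lem:trace-inequality}) and the periodicity of $u$.

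Next I would bound the training loss of the minimiser. Since $\theta^*(\S)$ solves \eqref{eq:opt} and $\theta_{\hat u}\in\Theta_{L,W,R}$, one has $\Et(\theta^*(\S),\S)^2\le\Et(\theta_{\hat u},\S)^2$; applying the midpoint-quadrature bound \eqref{eq:quad-error} term by term in \eqref{eq:etrain} for the fixed network $(\hat u,\hat p)$ gives $\Et(\theta_{\hat u},\S)^2\le\Eg(\theta_{\hat u})^2+\bigO(M_\inte^{-2/(d+1)}+M_t^{-2/d}+M_s^{-2/d})$, where the constants are fixed multiples of the $C^2$-norms of $\rpde[\hat u]^2,\rt[\hat u]^2,\rs[\hat u]^2$, which stay bounded because $\hat u\to u$ in every fixed $C^m$-norm. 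With the prescribed sizes of $M_t,M_\inte,M_s$ each remainder is $o(\epsilon^2)$, so $\Et(\theta^*(\S),\S)$, and hence each of $\Et^\pde,\Et^\divv,\Et^s,\Et^t$ at $\theta^*(\S)$, is $\bigO(\ln^2(\beta N)N^{-k+2})$, in particular $o(\epsilon)$.

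Finally I would feed $(u_{\theta^*(\S)},p_{\theta^*(\S)})$ into Theorem~\ref{thm:generalization}, legitimate because $(u,p)\in H^k\hookrightarrow C^4$ for $k>\gamma$; the exponential prefactor is an $\epsilon$-independent constant, so it suffices to show $\mathcal C(M)=\bigO(\epsilon^2)$. The squared training-error terms in $\mathcal C(M)$ are $o(\epsilon^2)$ by the previous step. For the remaining terms — $C_tM_t^{-2/d}$, $C_\pde M_\inte^{-2/(d+1)}$, $C_1C_\divv M_\inte^{-1/(d+1)}$, $C_1C_sM_s^{-1/d}$, together with the linear terms $C_1\sqrt T\,\Et^\divv$ and $C_1\sqrt T\,\Et^s$ — I would bound $C_1,C_t,C_\pde,C_\divv,C_s$ by the a priori estimates \eqref{eq:bounds-constants} with $L=3$, $W\lesssim\epsilon^{-(d+1)/(k-\gamma)}$ and $R\lesssim\epsilon^{-1/(k-\gamma)}\ln(1/\epsilon)$ together with Lemma~\ref{lem:nn-cn-bound}; each term then is a fixed negative power of $\epsilon$ times either a negative power of some $M_\bullet$ or one of $\Et^\divv,\Et^s$, and the prescribed exponents $M_t\ge\epsilon^{-d(1+\gamma/(k-\gamma))}$, $M_\inte\ge\epsilon^{-2(d+1)(1+\gamma/(k-\gamma))}$, $M_s\ge\epsilon^{-2d(1+\gamma/(k-\gamma))}$ are calibrated so that the net exponent of $\epsilon$ is at least $2$ for every term — this is exactly where the threshold $k>\gamma=6(3d+8)$ is used. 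Summing gives $\mathcal C(M)=\bigO(\epsilon^2)$, hence $\|u-u_{\theta^*(\S)}\|_{L^2(D\times[0,T])}=\bigO(\epsilon)$.

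The genuine work, and the only real obstacle, is this last bookkeeping step: one must track the polynomial dependence of $C_1,C_t,C_\pde,C_\divv,C_s$ on $(W,R)$ through \eqref{eq:bounds-constants} and Lemma~\ref{lem:nn-cn-bound}, substitute the $\epsilon$-parametrisations of $W,R,M_t,M_\inte,M_s$, and verify that \emph{all} of the resulting powers of $\epsilon$ simultaneously reach $2$ (so that the surviving $\mathrm{polylog}(1/\epsilon)$ factors are absorbed). The tension is that the quadrature constants of the trained network — which, unlike the explicit competitor $\hat u$, is only known to lie in $\Theta_{L,W,R}$, so the crude $(W,R,L)$-bounds must be used — blow up polynomially in $(W,R)$, and the only decay available to offset this is that of the $M_\bullet^{-\mathrm{power}}$ terms and of the competitor's residuals (whose smallness is in turn limited by how large $N$, and hence the network, may be taken inside $\Theta_{L,W,R}$). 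The rather pessimistic threshold $k>6(3d+8)$ and the $1+\gamma/(k-\gamma)$ form of the sampling exponents are precisely what is needed to make this balance close; keeping $L=3$ is what keeps the exponents $2L,4L,3L/2$ in \eqref{eq:bounds-constants} bounded.
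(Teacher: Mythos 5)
Your overall route is the same as the paper's: construct the competitor network of Theorem \ref{thm:pinn-approx} inside $\Theta_{L,W,R}$ (the paper likewise takes $R=\bigO(N\ln N)$, $W=\bigO(N^{d+1})$, $L=3$ with $N=\epsilon^{-1/(k-\gamma)}$), use minimality of $\theta^*(\S)$ to bound $\Et(\theta^*(\S),\S)$ by $\Et(\hat\theta,\S)$, pass between training and generalization errors via the midpoint quadrature estimate, and feed everything into Theorem \ref{thm:generalization}, with all the real work in the final exponent bookkeeping. The only structural difference is cosmetic: the paper pulls out the single crude prefactor $(W^3R^4)^{4L}\lesssim N^{12(3d+8)}=N^{2\gamma}$ in front of the whole bracket, whence $N^{2\gamma}N^{-2k}=\epsilon^{2}$ and the sampling exponents $1+\gamma/(k-\gamma)$ drop out immediately, whereas you propose a term-by-term treatment of $\mathcal{C}(M)$; the two are equivalent.

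One step of your argument is, however, wrongly justified. You claim that the quadrature constants for the competitor $(\hat u,\hat p)$ --- i.e.\ $\Vert\rpde[(\hat u,\hat p)]^2\Vert_{C^2}$ and its analogues, which involve $\Vert\hat u\Vert_{C^4}$ --- ``stay bounded because $\hat u\to u$ in every fixed $C^m$-norm.'' Theorem \ref{thm:pinn-approx} (via Theorem \ref{thm:tanh-approximation}) only controls $\Vert u-\hat u\Vert_{H^\ell(\Omega)}$ for $\ell\le 2$; it says nothing about $C^4$, and indeed the weights of $\hat u$ grow like $N\ln N$, so by Lemma \ref{lem:nn-cn-bound} its higher $C^m$-norms grow polynomially in $N$, hence in $1/\epsilon$. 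The step is reparable: bound these constants through Lemma \ref{lem:nn-cn-bound} exactly as you do for the trained network, and check that the prescribed $M_t$, $M_\inte$, $M_s$ still dominate the resulting extra negative powers of $\epsilon$ (they do --- this slack is precisely what the $\gamma/(k-\gamma)$ surplus in the sampling exponents provides). But as written the justification is false, and your conclusion that ``each remainder is $o(\epsilon^2)$'' does not follow from it.
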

\begin{proof}
For arbitrary $N\in\N$, we set $R=\bigO(N\ln(N))$, $W = \bigO(N^{d+1})$ and $L=3$ and we let $\Theta := \Theta_{L,W,R}$ as in Definition \ref{def:nn}. From Theorem \ref{thm:pinn-approx} we know that there exists $\hat{\theta}\in\Theta$ for which $\Eg(\hat{\theta}) = \bigO(\ln^2(N)N^{-k+2})$. 

Now let $\theta^*(\S)$ be the parameter that minimizes $\min_{\theta\in\Theta} \Et(\theta,\S)$ as in \eqref{eq:opt}. We note that Theorem \ref{thm:generalization} implies that
\begin{equation}
    \Vert u-u_{\theta^*(\S)}\Vert_{L^2(D\times [0,T])}^2\lesssim (W^3R^4)^{4L}\left(\Et(\theta^*(\S),\S)^2 + M_t^{-\frac{2}{d}} + M_\mathrm{int}^{-\frac{1}{d+1}} + M_s^{-\frac{1}{d}}\right). 
\end{equation}
By definition, it must hold that $\Et(\theta^*(\S),\S) \leq \Et(\hat{\theta},\S)$. We use the same quadrature rules as in Theorem \ref{thm:generalization} to bound $\Et(\hat{\theta},\S)$ by $\Eg(\hat{\theta})$ and rewrite the previous bound as,
\begin{equation}
    \Vert u-u_{\theta^*(\S)}\Vert_{L^2(D\times [0,T])}^2 \lesssim (W^3R^4)^{4L}\left(\Eg(\hat{\theta})^2 + M_t^{-\frac{2}{d}} + M_\mathrm{int}^{-\frac{1}{d+1}} + M_s^{-\frac{1}{d}}\right). 
\end{equation}
Rewriting everything in terms of $N$, we find that
\begin{equation}
\begin{split}
    \Vert u-u_{\theta^*(\S)}\Vert_{L^2(D\times [0,T])}^2 &\lesssim \left(N^{3(d+1)}N^4\ln^4(N)\right)^{12}\left(\ln^4(N)N^{-2k+4} + M_t^{-\frac{2}{d}} + M_\mathrm{int}^{-\frac{1}{d+1}} + M_s^{-\frac{1}{d}}\right) \\
    %&\lesssim N^{12(3(d+1)+5)}\left(N^{-2k} + M_t^{-\frac{2}{d}} + M_\mathrm{int}^{-\frac{1}{d+1}} + M_s^{-\frac{1}{d}}\right) \\
    &\lesssim N^{12(3d+8)}\left(N^{-2k} + M_t^{-\frac{2}{d}} + M_\mathrm{int}^{-\frac{1}{d+1}} + M_s^{-\frac{1}{d}}\right).
\end{split}
\end{equation}
We now choose $N$ and the training set sizes in such a way that the RHS of the above inequality is $\bigO(\epsilon^2)$. Concretely, this means setting $N = \epsilon^{-1/(k-\gamma)}$, $M_t = \epsilon^{-d(1+\gamma/(k-\gamma))}$, $M_\inte = \epsilon^{-2(d+1)(1+\gamma/(k-\gamma))}$ and $M_s = \epsilon^{-2d(1+\gamma/(k-\gamma))}$. This concludes the proof of the corollary.
\end{proof}
}

\section{Numerical experiments}
\label{sec:4}
In this section, we seek to illustrate the bounds on error of the PINN and XPINN approximations of the Navier-Stokes equations \eqref{eq:navier-stokes}, empirically with a numerical experiment.  

To this end, we consider the Navier-Stokes equations in two space dimensions and initial data that corresponds to the Taylor-Green vortex test case, which is an unsteady flow of decaying vortices. The exact closed form solutions of Taylor-Green vortex problem are given by
\begin{align*}
u(t,x,y) &= - \cos(\pi x) \sin(\pi y) \text{exp}(-2\pi^2\nu t)  
\\ v(t,x,y) &= \sin(\pi x) \cos(\pi y) \text{exp}(-2\pi^2\nu t) 
\\ p(t,x,y) &= -\frac{\rho}{4}\left[\cos(2\pi x) + \cos(2\pi y)\right] \text{exp}(-4 \pi^2\nu t)  
\end{align*}
The spatio-temporal domain is $x, y \in [0.5,4.5]^2$ and $t \in [0,1].$

The Taylor-Green vortex serves two key requirements in our context. First, it provides an analytical solution of the Navier-Stokes equations and enables us to evaluate $L^2$-errors with respect to this exact solution and without having to consider further (numerical) approximations. Second, the underlying solution is clearly smooth enough to fit the regularity criteria of all our error estimates, presented in the previous section. 

We will approximate the Taylor-Green vortex with PINNs and XPINNs. In case of XPINNs, we decompose the domain into two subdomains along $x$-axis ($x\geq 2.5$ and $x<2.5$) where separate neural networks are employed. On the common interface we used 300 points for stitching these two subdomains together. The value of density is set at $\rho = 1$. An ensemble training procedure is performed to find the correlation between the total error ($\mathcal{E}$) and the training error ($\mathcal{E}_T$) for different values of $\nu$. For the neural network training we used full batch with Adam optimizer for the first 20000 number of iterations, followed by L-BFGS optimizer \citep{byrd1995limited} for another 60000 iterations or till convergence. The number of layers in both PINN and XPINN are 2 (as suggested by the theory) with 80 neurons in each layer, and the quadrature points are 27K, which are obtained using mid-point rule. The learning rate is 8e-4, and the activation function is hyperbolic tangent in both cases. 
\begin{figure} [htpb] 
\centering
\includegraphics[trim=0cm 0cm 0cm 0cm, clip=true, scale=0.44, angle = 0]{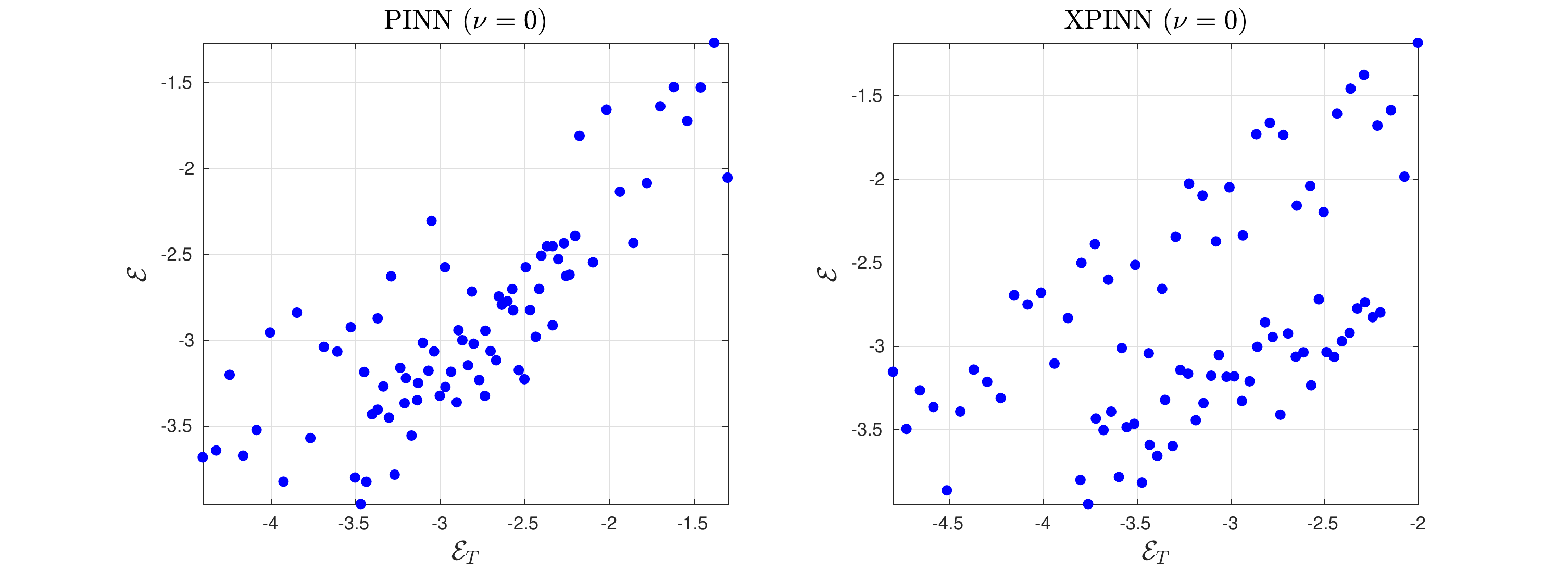}
\includegraphics[trim=0cm 0cm 0cm 0cm, clip=true, scale=0.44, angle = 0]{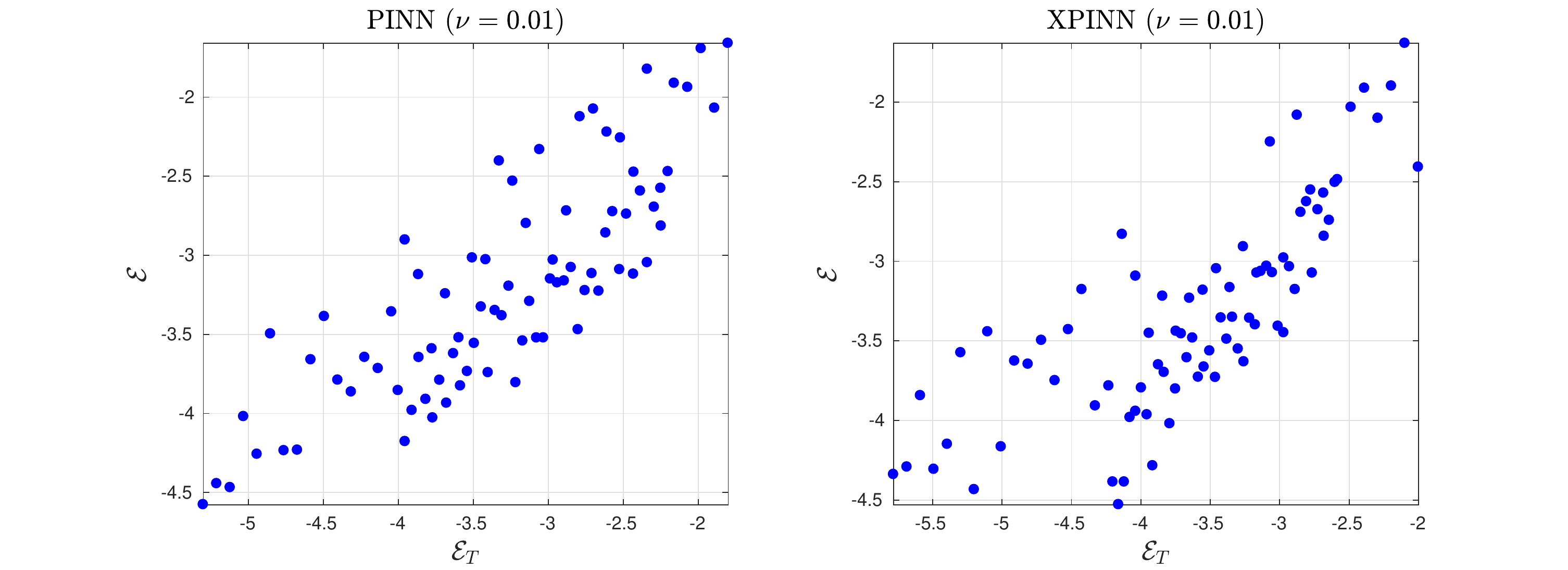}
\includegraphics[trim=0cm 0cm 0cm 0cm, clip=true, scale=0.44, angle = 0]{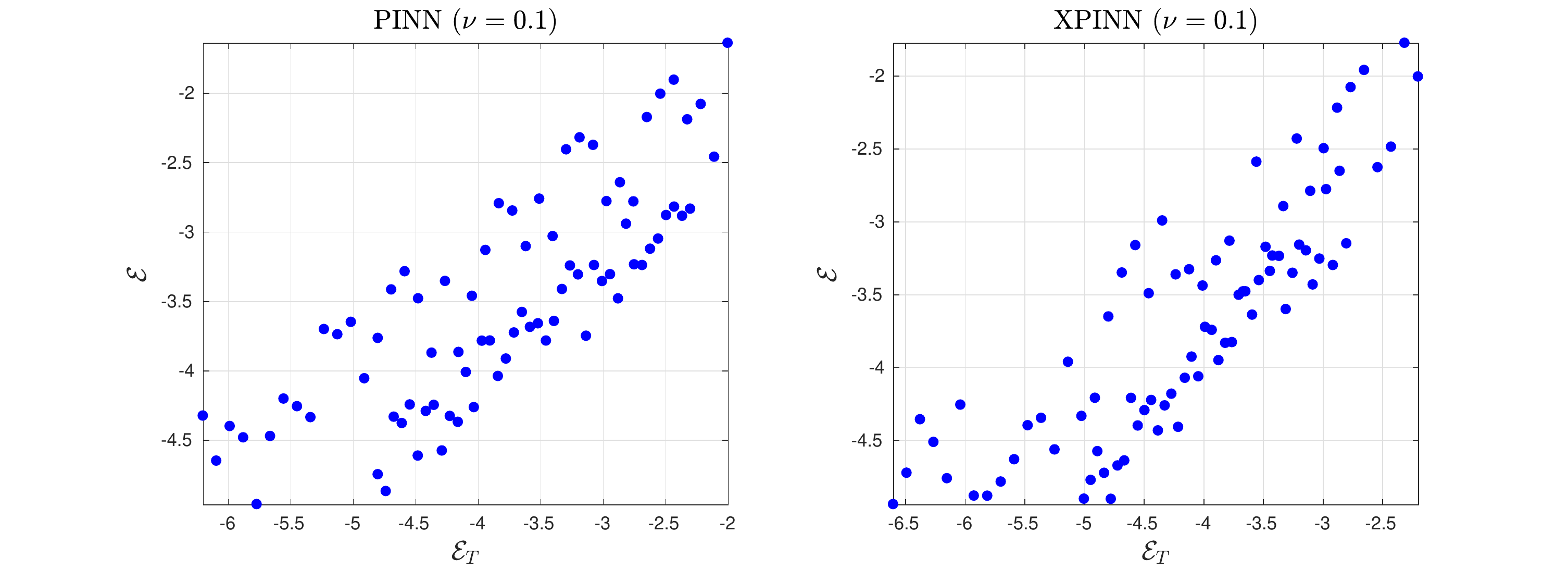}
\caption{Log of training error vs. log of total error for each parameter (weights and biases) configuration during ensemble training. We used $\nu = 0, 0.01$ and 0.1.}
\label{fig:fig1}
\end{figure}
We train the networks 80 times with different set of initialization to weights and biases. Figure \ref{fig:fig1} shows the log of training error vs. log of total error for each parameter configuration during ensemble training with three different values of viscosity $\nu$. As seen from this figure, total error $\Etot = \|u - u^{\ast}\|_{L^2}$ and the training error $\Et$ \eqref{eq:etrain} are very tightly correlated (along the diagonal in Figure \ref{fig:fig1}). In particular and consistent with the estimates in Theorem \ref{thm:generalization}, a small training error implies a small total error. Moreover, we see from Figure \ref{fig:fig1} that the total error $\Etot$ approximately scales as the square root of training error i.e., $\Etot \lesssim \sqrt{\Et}$, which is also consistent with the bounds in Theorem \ref{thm:generalization}.

Next, we investigate the behavior of the total and training errors by varying the number of quadrature points. To this end, we train both PINNs and XPINNs 20 times with different parameter initializations and plot the mean and standard deviation of the errors as shown in Figure \ref{fig:fig2}. All results are of a neural network architecture with 2 hidden layers, with 80 neurons in each layer and the hyperbolic tangent activation function. Moreover, the learning rate is the same as before.

\begin{figure} [htpb] 
\centering
\includegraphics[trim=0cm 0cm 0cm 0cm, clip=true, scale=0.36, angle = 0]{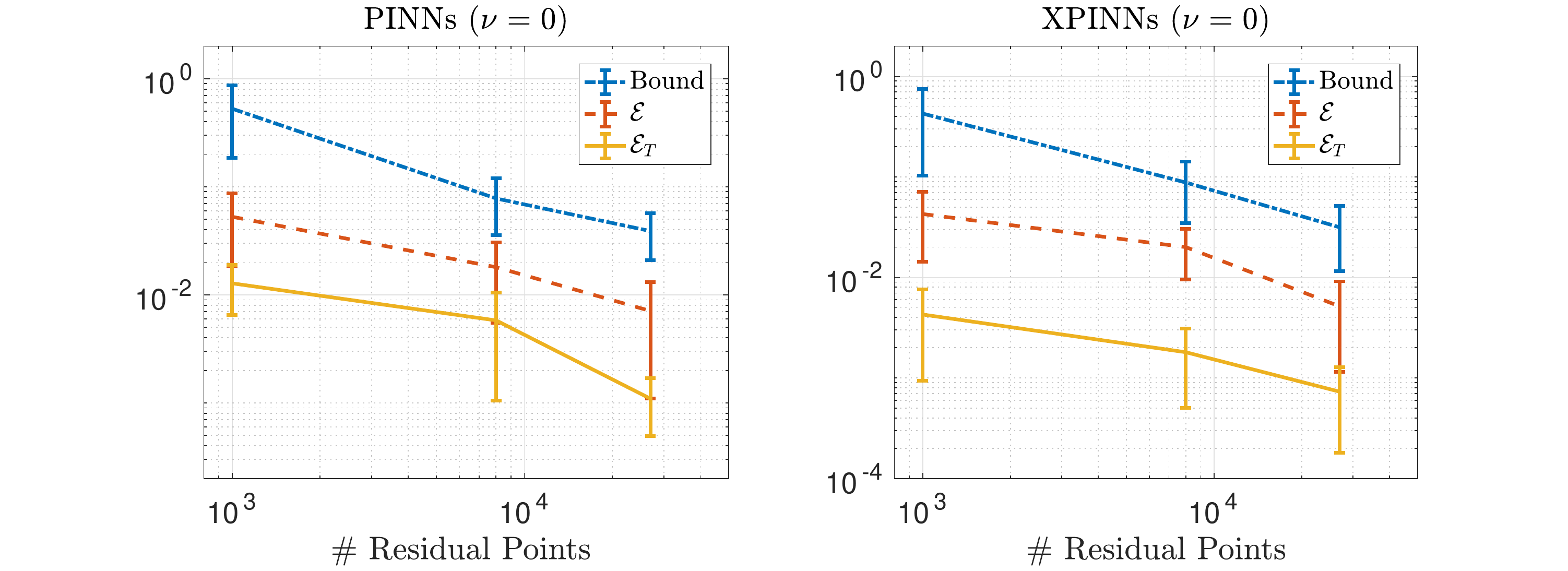}
\includegraphics[trim=0cm 0cm 0cm 0cm, clip=true, scale=0.36, angle = 0]{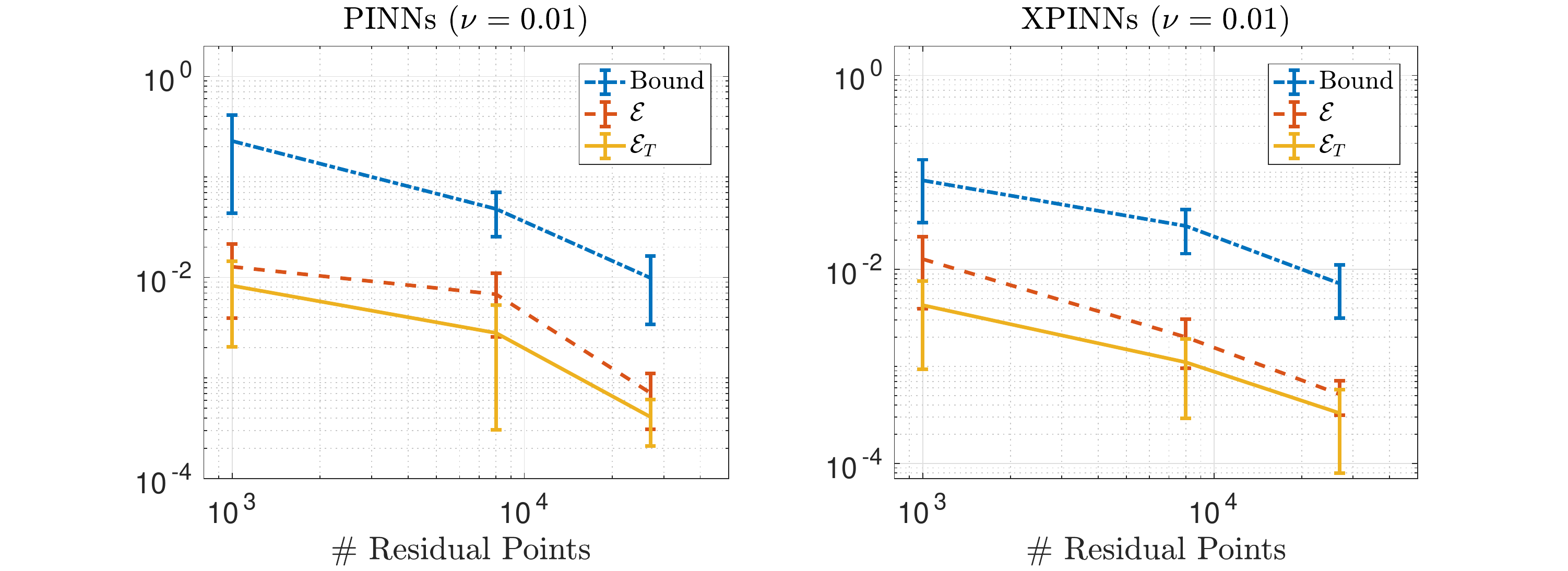}
\includegraphics[trim=0cm 0cm 0cm 0cm, clip=true, scale=0.36, angle = 0]{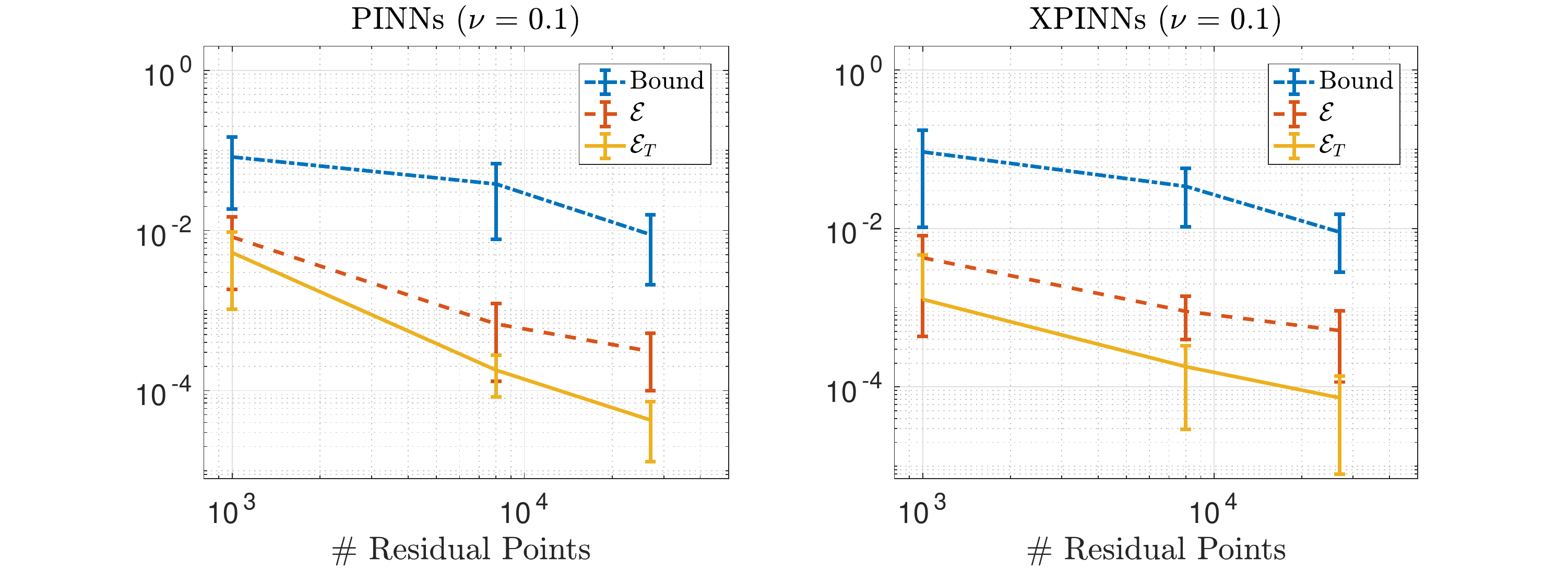}
\caption{Training and total errors for different number of quadrature points (residual points).}
\label{fig:fig2}
\end{figure}

We see from Figure \ref{fig:fig2} that both the training as well as total errors decay with respect to the number of quadrature points till they are saturated around $27$K quadrature points and do not decay any further. To further illustrate the error estimates derived in the previous section, we revisit the error estimate \eqref{eq:etrain1}. Given the elaboration of the appearing constants in \eqref{eq:bounds-constants} and the fact that we have access to the exact solution for the Taylor-Green vortex as well as to the (derivatives of) the PINN, we can \emph{explicitly compute} a theoretical bound on the total error in \eqref{eq:etrain1}. This error depends on the number of quadrature points as well as on the particular weights of the trained PINN. This theoretical bound is also depicted in Figure \ref{fig:fig2}. We see from this figure that the computed theoretical bound closely tracks the qualitative as well as quantitative behavior of the total error for all cases considered here. The rates of decay of both the error and the bound are very similar. However the bound is not quantitatively sharp as there is an approximately one order of magnitude difference in its amplitude vis a vis the total error. Such non-sharp bounds on the error are common in theoretical machine learning, see \citep{Arora} for instance. Even in the case of PINNs, they were already seen in \citep{MM1} where the authors observed at least two to three orders of magnitude discrepancy between their theoretical bounds and the realized total error. Given this context, an order of magnitude discrepancy between the bound in \eqref{eq:etrain1} and the observed total error is quite satisfactory.  
\begin{figure} [htpb] 
\centering
\includegraphics[trim=0cm 0cm 0cm 0cm, clip=true, scale=0.36, angle = 0]{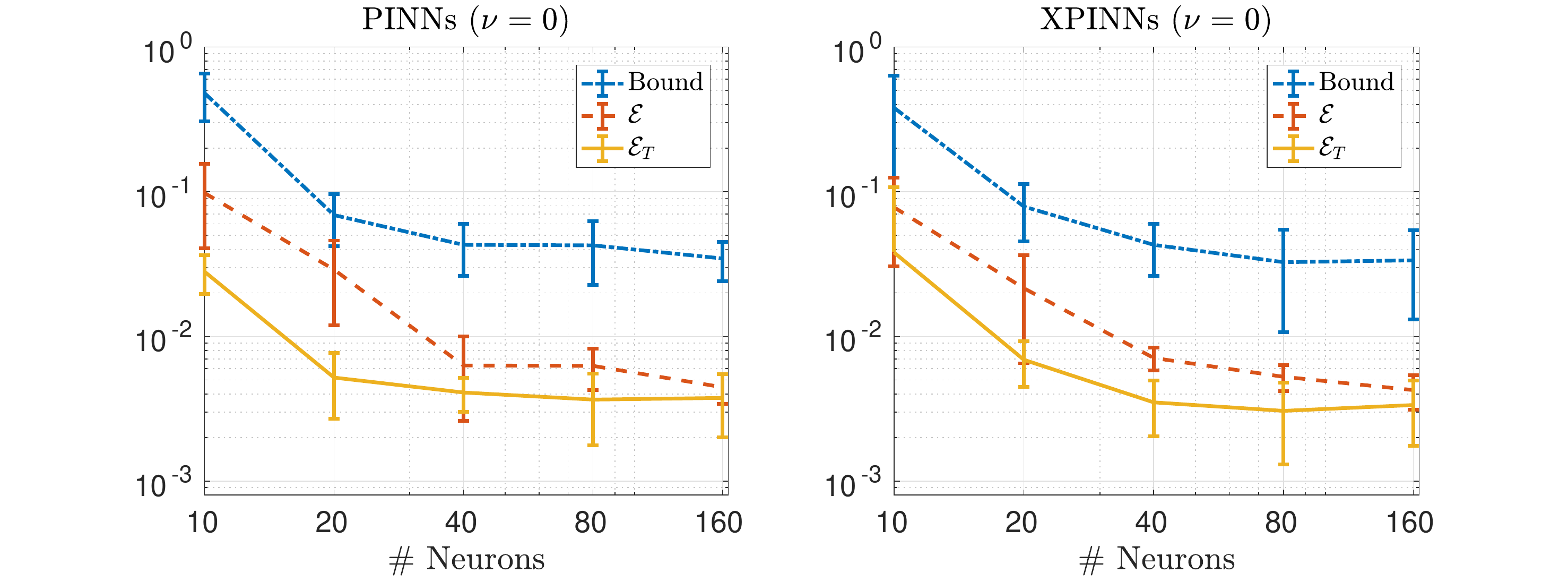}
\includegraphics[trim=0cm 0cm 0cm 0cm, clip=true, scale=0.36, angle = 0]{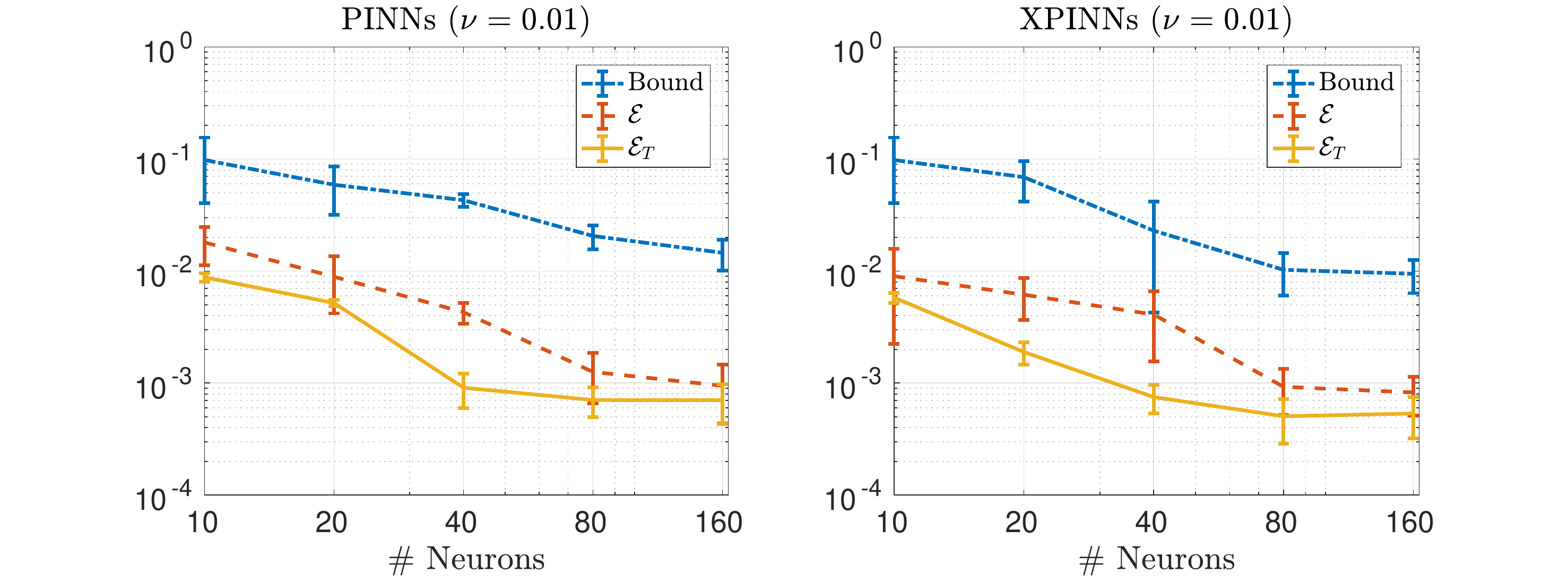}
\includegraphics[trim=0cm 0cm 0cm 0cm, clip=true, scale=0.36, angle = 0]{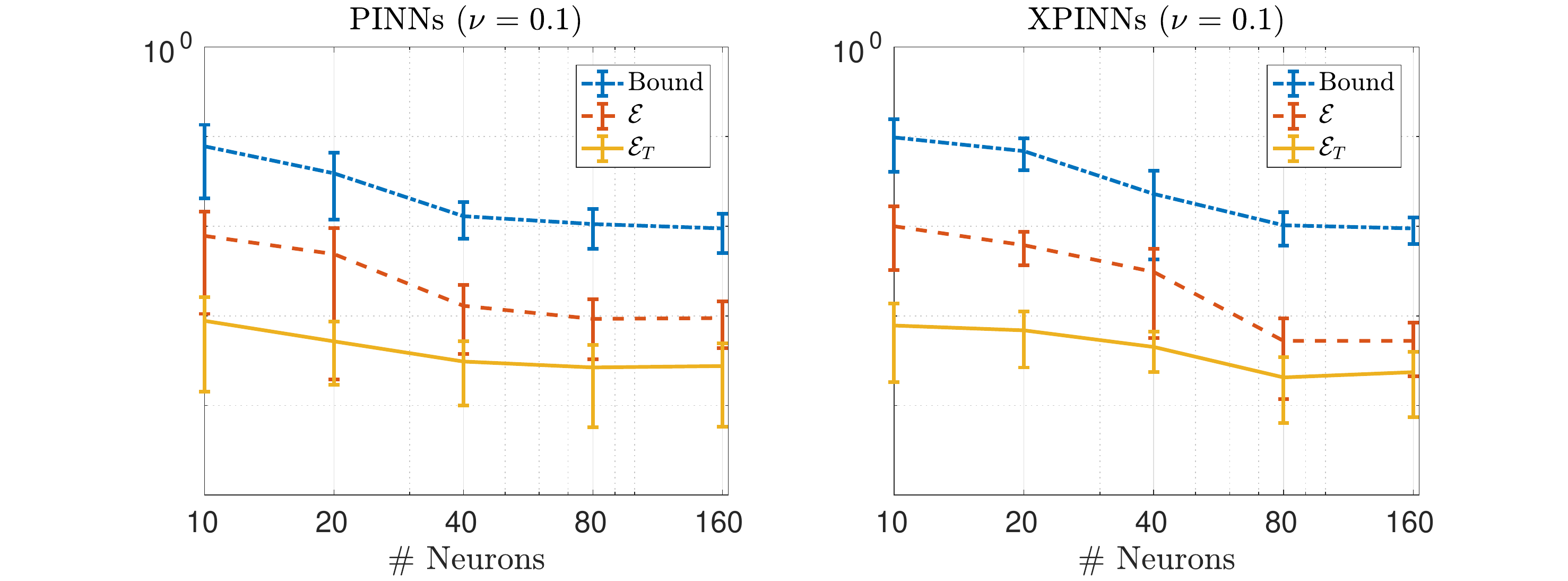}
\caption{Training and total errors for different number of neurons.}
\label{fig:fig3}
\end{figure}

Finally, we study the behavior of the error as the number of neurons is increased. Given our theoretical considerations, where the relevant error estimates where shown for tanh neural networks with two hidden layers, we restrict ourselves to this setting by only varying the network width and keep the number of hidden layers fixed at two. We again train the PINN and XPINNs networks 20 times with different parameter initializations. The learning rate has the same value as in the previous numerical experiments and the number of quadrature points is fixed at $64$ K. The resulting training and total errors are presented in Figure \ref{fig:fig3} and show that the total error decreases with the number of neurons in each layer till it gets saturated. Moreover, the computable upper bound \eqref{eq:etrain1} is also depicted and we see from this figure, that the bound \eqref{eq:etrain1} follows the same decaying trend, till saturation, as the total as well as training errors in this particular example. 
\section{Discussion}
\label{sec:discussion}
Physics-informed neural networks have been very successful in the numerical approximation of the solutions of forward as well as inverse problems for various classes of PDEs. However, there is a significant paucity of theoretical results on the resulting errors. Following the framework of a recent paper \citep{DRM1}, we revisit the key theoretical questions Q1 (on the smallness of the PDE residual in the class of neural networks), Q2 (a small residual implying a small total error) and Q3 (small training errors imply small total errors for sufficient number of quadrature points), raised in the introduction. We have answered these questions affirmatively for the incompressible Navier-Stokes equations in this paper. The incompressible Navier-Stokes equations constitute a very important example for nonlinear PDEs and PINNs have already been used in approximating them before \citep{jin2021nsfnets} but without much theoretical justification. 

Summarizing our theoretical results, we have shown in this paper that 
\begin{itemize}
    \item For sufficiently smooth (Sobolev regular) initial data, there exists neural networks, with the tanh activation function and with two hidden layers, such that the resulting PDE residuals can be arbitrarily small. Moreover in Theorem \ref{thm:pinn-approx}, we obtain very precise quantitative estimates on the sizes of resulting neural networks, in terms of regularity of the underlying classical solution. The proof of this approximation result relies heavily on the smoothness of the solutions of Navier-Stokes and on the approximation of smooth functions by neural networks in sufficiently high Sobolev norms. 
    \item In Theorem \ref{thm:stability}, we show that the total $L^2$ error of the PINN (and XPINN) approximations is bounded by the PDE residuals for the incompressible Navier-Stokes equations. Moreover, the underlying constants in the bound are clearly quantified in terms of the underlying classical solution as well as the approximating neural networks. This result leverages the stability (or rather coercivity) of classical solutions of the Navier-Stokes equations. Thus, we answer question Q2 affirmatively by showing a small PDE residual implies a small total error. 
    \item In Theorem \ref{thm:generalization}, we answer question Q3 by proving a bound \eqref{eq:etrain1} on the total error in terms of the training error and the number of quadrature points. Thus, if one reaches a global minimum of the underlying optimization problem \eqref{eq:opt}, one can show that the training error, and consequently the total error, can be made as small as possible if sufficient number of quadrature points are considered.
\end{itemize}
Taken together, the above theorems constitute the first comprehensive theoretical analysis of PINNs (and XPINNs) for a prototypical nonlinear PDE, the Navier-Stokes equations. We also illustrate the bounds in a simple numerical experiment demonstrating a qualitative as well as quantitative agreement between the rigorous bounds and the empirical results. 

Given this account of the strengths of our results, it is also fair to point out possible limitations and highlight avenues for future investigation. These include
\begin{itemize}
    \item Our estimates do not estimate the training error $\Et$, except under the assumption that one finds a global minimum for the optimization problem \eqref{eq:opt}. In practice, it is well known that (stochastic) gradient descent algorithms converge to local minima. In such cases, there is no guarantee on the smallness of the training error. Thus, one needs to find new techniques to estimate training errors for PINNs. On the other hand, our and other numerical results, see \citep{jin2021nsfnets} for instance, indicate that the training error can be made small. Then a bound like \eqref{eq:etrain1} clearly indicates that the overall error will be small. This is indeed borne out in numerical experiments (see Figure \ref{fig:fig3}). 
    \item Our estimates rely heavily on the regularity of the underlying solutions of Navier-Stokes equations \eqref{eq:navier-stokes}. There are two caveats in this context. First, in two space dimensions, one knows that the underlying solution will be sufficiently regular if the corresponding initial data is regular enough \citep{temam2001navier}. However, in three space dimensions, such results are a part of the millennium prize problems and are incredibly hard to obtain. On the more practical level, it is clear from Theorems \ref{thm:stability} and \ref{thm:generalization} that the errors will grow if the $C^1$ norms of the underlying exact solutions are large. This is clearly the case, particularly in three space dimensions, where solution gradients grow as vortices are stretched. Hence, one can expect the PINN errors to grow too and this is indeed seen in practice (see section 5 of \citep{MM1} for instance). However, traditional numerical methods such as finite element methods and spectral viscosity methods also suffer from the same issue and it is not expected to be different for PINNs. In this context, it would be interesting to investigate if the approaches which are based on weak formulations of PDE residuals might lead to better estimates and numerical results.  
\end{itemize}
Finally, only the forward problem is considered here. It would be interesting to extend the theoretical tools and bounds in the paper to inverse problems for the Navier-Stokes equations (see \citep{KAR4}) as well as physics-informed operator learning \citep{piol}.

\bibliographystyle{agsm}
\bibliography{ref}

\appendix

\section{Notation and auxiliary results}\label{sec:notation}

This section provides an overview of the notation used in the paper and recalls some basic results on Sobolev spaces.

\subsection{Multi-index notation}\label{sec:multi-index}

For $d\in\mathbb{N}$, we call a $d$-tuple of non-negative integers $\alpha \in \N^d_0$ a multi-index. We write $\abs{\alpha} = \sum_{i=1}^d \alpha_i$, $\alpha! = \prod_{i=1}^d \alpha_i!$ and, for $x\in\mathbb{R}^d$, we denote by $x^\alpha =  \prod_{i=1}^d x_i^{\alpha_i}$ the corresponding multinomial. Given two multi-indices $\alpha, \beta\in \N^d_0$, we say that $\alpha \leq \beta$ if, and only if, $\alpha_i\leq \beta_i$ for all $i=1,\dots, d$. For a multi-index $\alpha$, we define the following multinomial coefficient
\begin{equation}
    \binom{\abs{\alpha}}{\alpha} = \frac{\abs{\alpha}!}{\alpha!}, 
\end{equation}
and, given $\alpha \le \beta$, we define a corresponding multinomial coefficient by
\begin{equation}
    \binom{\beta}{\alpha} = \prod_{i=1}^d \binom{\beta_i}{\alpha_i} = \frac{\beta !}{\alpha! (\beta-\alpha)!}.
\end{equation}
For $\Omega\subseteq \mathbb{R}^d$ and a function $f:\Omega\to\mathbb{R}$ we denote by 
\begin{equation}
    D^\alpha f= \frac{\partial^{\abs{\alpha}} f}{\partial x_1^{\alpha_1}\cdots \partial x_d^{\alpha_d}}
\end{equation}
the classical or distributional (i.e. weak) derivative of $f$. 

We will also encounter the set $P_{n,d} = \{\alpha\in  \mathbb{N}_0^d : \abs{\alpha} = n\}$, for which it holds that $\abs{P_{n,d} } = \binom{n+d-1}{n}$.

\subsection{Sobolev spaces}\label{sec:sobolev}

Let $d\in\mathbb{N}$, $k\in\mathbb{N}_0$, $1\leq p\leq \infty$ and let $\Omega \subseteq \mathbb{R}^d$ be open. We denote by $L^p(\Omega)$ the usual Lebesgue space and for we define the Sobolev space $W^{k,p}(\Omega)$ as
\begin{equation}
    W^{k,p}(\Omega) = \{f \in L^p(\Omega): D^\alpha f \in L^p(\Omega) \text{ for all } \alpha\in\mathbb{N}^d_0 \text{ with } \abs{\alpha}\leq k\}. 
\end{equation}
For $p<\infty$, we define the following seminorms on $W^{k,p}(\Omega)$, 
\begin{equation}
    \abs{f}_{W^{m,p}(\Omega)} = \left(\sum_{\abs{\alpha}= m}\norm{D^\alpha f}^p_{L^p(\Omega)}\right)^{1/p} \qquad \text{for } m=0,\ldots, k, 
\end{equation}
and for $p=\infty$ we define
\begin{equation}
    \abs{f}_{W^{m,\infty}(\Omega)} =\max_{\abs{\alpha}= m} \norm{D^\alpha f}_{L^\infty(\Omega)}\qquad \qquad \text{for } m=0,\ldots, k. 
\end{equation}
Based on these seminorms, we can define the following norm for $p<\infty$,
\begin{equation}
    \norm{f}_{W^{k,p}(\Omega)} = \left(\sum_{m=0}^k \abs{f}_{W^{m,p}(\Omega)}^p\right)^{1/p}, 
\end{equation}
and for $p=\infty$ we define the norm
\begin{equation}
    \norm{f}_{W^{k,\infty}(\Omega)} =\max_{0\leq m\leq k}  \abs{f}_{W^{m,\infty}(\Omega)}. 
\end{equation}
The space $W^{k,p}(\Omega)$ equipped with the norm $\norm{\cdot}_{W^{k,p}(\Omega)}$ is a Banach space. 

We denote by $C^k(\Omega)$ the space of functions that are $k$ times continuously differentiable and equip this space with the norm $\norm{f}_{C^k(\Omega)} = \norm{f}_{W^{k,\infty}(\Omega)}$.

We define the Hilbertian Sobolev spaces for $k\in\mathbb{N}_0$ as $H^k(\Omega)=W^{k,2}(\Omega)$ with corresponding norms $\norm{\cdot}_{H^k(\Omega)}=\norm{\cdot}_{W^{k,2}(\Omega)}$ and seminorms $\abs{\cdot}_{H^m(\Omega)}=\abs{\cdot}_{W^{m,2}(\Omega)}$ for integers $m$ with $0\leq m\leq k$. If $k$ is large enough, the space $H^k(\Omega)$ is a Banach algebra. We also recall a version of the Sobolev embedding theorem and a multiplicative trace inequality. 

\begin{lemma}\label{lem:banach-algebra}
For $d,k\in\mathbb{N}$ with $k>\frac{d}{2}$, $H^k(\Omega)$ is a Banach algebra i.e., there exists $c_k>0$ such that
\begin{equation}
    \forall u,v\in H^k(\Omega): \: \norm{uv}_{H^k(\Omega)}\leq c_k\norm{u}_{H^k(\Omega)}\norm{v}_{H^k(\Omega)}.
\end{equation}
\end{lemma}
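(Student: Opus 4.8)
The plan is to reduce the algebra estimate, via the Leibniz rule, to a collection of bilinear bounds of the form $\|fg\|_{L^2(\Omega)} \lesssim \|f\|_{H^s(\Omega)}\|g\|_{H^t(\Omega)}$ with $s,t\ge 0$ and $s+t > d/2$, and to establish these bilinear bounds using only the Sobolev embedding theorem (Lemma~\ref{lem:sobolev-embedding}) and Hölder's inequality. Throughout one may assume $\Omega$ is a bounded Lipschitz domain, all of $\mathbb{R}^d$, or more generally a Sobolev extension domain; this covers every $\Omega$ occurring in the paper, namely $\mathbb{T}^d$ and $\mathbb{T}^d\times[0,T]$.

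First I would record the pointwise Leibniz formula: for smooth $u,v$ and a multi-index $\alpha$ with $\abs{\alpha}=m\le k$,
\[
D^\alpha(uv) = \sum_{\beta\le\alpha}\binom{\alpha}{\beta}D^\beta u\, D^{\alpha-\beta}v .
\]
Taking $L^2(\Omega)$ norms and summing over $\abs{\alpha}\le k$, it then suffices to bound each term $\norm{D^\beta u\, D^{\alpha-\beta}v}_{L^2(\Omega)}$ by $\norm{u}_{H^k(\Omega)}\norm{v}_{H^k(\Omega)}$ up to a combinatorial constant depending only on $k$ and $d$.

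Next comes the \emph{key bilinear estimate}: if $f\in H^s(\Omega)$, $g\in H^t(\Omega)$ with $s,t\ge 0$ and $s+t>d/2$, then $fg\in L^2(\Omega)$ with $\norm{fg}_{L^2}\le C\norm{f}_{H^s}\norm{g}_{H^t}$. I would prove this by a short case distinction on $\max(s,t)$ versus $d/2$. If $\max(s,t)>d/2$, say $s>d/2$ by symmetry, then Sobolev embedding gives $f\in L^\infty(\Omega)$ while $g\in L^2(\Omega)$, and Hölder closes the estimate. Otherwise $s,t\le d/2$, and Sobolev embedding gives $f\in L^p(\Omega)$, $g\in L^q(\Omega)$ with $\tfrac1p=\tfrac12-\tfrac sd$, $\tfrac1q=\tfrac12-\tfrac td$ (taking $p,q$ arbitrarily large but finite in the borderline cases $s=d/2$ or $t=d/2$, where $H^{d/2}$ embeds into every $L^p$ with $p<\infty$); then $\tfrac1p+\tfrac1q=1-\tfrac{s+t}{d}<\tfrac12$, so Hölder again yields $fg\in L^2(\Omega)$ (on an unbounded domain one shrinks $p,q$ slightly so that $\tfrac1p+\tfrac1q=\tfrac12$ exactly). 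Applying this with $f=D^\beta u\in H^{k-\abs{\beta}}(\Omega)$ and $g=D^{\alpha-\beta}v\in H^{k-\abs{\alpha-\beta}}(\Omega)$, whose Sobolev exponents sum to $(k-\abs{\beta})+(k-\abs{\alpha-\beta})=2k-\abs{\alpha}\ge 2k-k=k>d/2$, produces exactly the required termwise bound, and summing over $\beta\le\alpha$ and $\abs{\alpha}\le k$ yields a constant $c_k$ of the desired form.

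Finally, one upgrades from smooth $u,v$ to general $u,v\in H^k(\Omega)$: by density of $C^\infty(\Omega)\cap H^k(\Omega)$ in $H^k(\Omega)$, the estimate just established shows that for approximating sequences $u_n,v_n$ the products $u_nv_n$ are Cauchy in $H^k(\Omega)$, which identifies the limit with $uv$, validates the Leibniz formula in the weak sense, and passes the bound $\norm{u_nv_n}_{H^k}\le c_k\norm{u_n}_{H^k}\norm{v_n}_{H^k}$ to the limit. I expect the only genuinely delicate point to be the borderline Sobolev exponent $s$ or $t$ equal to $d/2$, where $H^{d/2}$ fails to embed into $L^\infty$; this is handled, as indicated above, by using the embedding $H^{d/2}\hookrightarrow L^p$ for every finite $p$ together with the strict inequality $s+t>d/2$, which leaves room to spare in the Hölder pairing.
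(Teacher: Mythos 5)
The paper does not prove this lemma at all: it is recalled in the appendix as a classical fact (the Sobolev algebra property) and used as a black box in Corollary \ref{cor:NS-Hk}, so there is no in-paper argument to compare against. Your proof is the standard one and is correct: the Leibniz rule reduces the claim to the bilinear bound $\norm{fg}_{L^2}\le C\norm{f}_{H^s}\norm{g}_{H^t}$ for $s+t>d/2$, which you correctly split into the case $\max(s,t)>d/2$ (one factor in $L^\infty$ by Sobolev embedding) and the case $s,t\le d/2$ (H\"older with exponents $1/p=1/2-s/d$, $1/q=1/2-t/d$, where $1/p+1/q<1/2$ precisely because $s+t>d/2$), and the application with $s=k-\abs{\beta}$, $t=k-\abs{\alpha-\beta}$, $s+t=2k-\abs{\alpha}\ge k>d/2$ closes the estimate. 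Two small points worth making explicit: the domains on which the lemma is actually invoked in the paper are $\mathbb{T}^d$ and $\mathbb{T}^d\times[0,T]$, which are bounded Sobolev extension domains, so both the embeddings with the stated exponents and the density of smooth functions used in your final limiting step are available without further hypotheses; and in the borderline subcase $s=d/2$ or $t=d/2$ the constant of the embedding $H^{d/2}\hookrightarrow L^p$ degenerates as $p\to\infty$, but since $k$ and $d$ are fixed the exponents can be fixed once and for all, so $c_k$ remains finite, as you indicate.
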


\begin{lemma}\label{lem:sobolev-embedding}
Let $d\in\mathbb{N}$, $k,\ell\in\mathbb{N}_0$ with $k>\ell+\frac{d}{2}$ and $\Omega\subset \mathbb{R}^d$ an open set. Every function $f\in H^k(\Omega)$ has a continuous representative belonging to $C^\ell(\Omega)$. 
\end{lemma}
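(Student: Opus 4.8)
The plan is to prove the statement \emph{locally}, since membership in $C^\ell$ is a local property; this is also what lets us dispense with any regularity of $\partial\Omega$ (no global Sobolev extension operator is required). First I would reduce to the model case $\ell=0$, $\Omega=\mathbb{R}^d$: given $x_0\in\Omega$, choose $r>0$ with $\overline{B(x_0,2r)}\subset\Omega$ and a cutoff $\chi\in C_c^\infty(B(x_0,2r))$ with $\chi\equiv 1$ on $B(x_0,r)$; then $g:=\chi f\in H^k(\mathbb{R}^d)$ has compact support and agrees with $f$ on $B(x_0,r)$.

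Next, for $g\in H^k(\mathbb{R}^d)$ with $k>d/2$ I would pass to the Fourier transform. From $\hat g\in L^2$ together with $(1+\abs{\xi}^2)^{k/2}\hat g\in L^2(\mathbb{R}^d)$, Cauchy--Schwarz gives
\[
\int_{\mathbb{R}^d}\abs{\hat g(\xi)}\,d\xi\leq\Big(\int_{\mathbb{R}^d}(1+\abs{\xi}^2)^{-k}\,d\xi\Big)^{1/2}\Big(\int_{\mathbb{R}^d}(1+\abs{\xi}^2)^{k}\abs{\hat g(\xi)}^2\,d\xi\Big)^{1/2},
\]
and the first integral converges \emph{precisely} because $2k>d$ (pass to polar coordinates: $\int_0^\infty(1+\rho^2)^{-k}\rho^{d-1}\,d\rho<\infty$). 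Hence $\hat g\in L^1$, so by Fourier inversion $g$ has a representative in $C_0(\mathbb{R}^d)$, and in fact $\norm{g}_{C^0}\lesssim\norm{g}_{H^k}$. Restricting and covering $\Omega$ by such balls proves the case $\ell=0$, along with the continuity of the local embedding $H^k\hookrightarrow C^0$.

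For general $\ell$ I would apply the previous step to every weak derivative $D^\alpha f$ with $\abs{\alpha}\leq\ell$: since $D^\alpha f\in H^{k-\abs{\alpha}}$ locally and $k-\abs{\alpha}\geq k-\ell>d/2$, each $D^\alpha f$ has a continuous representative $g_\alpha$, with $g_0$ the representative of $f$ itself. To upgrade ``all these weak derivatives are continuous'' to ``$f$ is classically $C^\ell$ with $D^\alpha g_0=g_\alpha$'', I would mollify on a slightly shrunken open subset: $f_\varepsilon=f*\rho_\varepsilon$ satisfies $D^\alpha f_\varepsilon=(D^\alpha f)*\rho_\varepsilon$, and by the continuity of the embedding $H^{k-\ell}\hookrightarrow C^0$ just established, $D^\alpha f_\varepsilon\to g_\alpha$ uniformly on compact subsets for every $\abs{\alpha}\leq\ell$. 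The elementary fact that a sequence of $C^\ell$ functions all of whose derivatives up to order $\ell$ converge locally uniformly has a $C^\ell$ limit, whose derivatives are exactly those limits, then yields $g_0\in C^\ell(\Omega)$ with $D^\alpha g_0=g_\alpha$.

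The only genuine subtlety is the generality of $\Omega$, which is what forces the whole argument to be carried out with cutoffs supported in balls compactly contained in $\Omega$; gluing the resulting local representatives into a single global one is harmless, since two continuous functions that agree almost everywhere agree everywhere. The remaining point, namely the passage from weak derivatives admitting continuous representatives to honest $C^\ell$ regularity, is routine and is handled directly by the mollification/uniform-convergence argument above rather than by an inductive integration-by-parts identity.
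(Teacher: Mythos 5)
The paper states this lemma without proof, merely recalling it as a classical fact in the appendix, so there is no in-paper argument to compare against. Your proof is correct and complete: the cutoff localization is exactly what is needed to handle an arbitrary open $\Omega$ without boundary regularity, the Cauchy--Schwarz/Fourier-inversion step is the standard proof of the local embedding $H^k\hookrightarrow C^0$ for $k>d/2$, and the mollification argument legitimately upgrades continuity of the individual weak derivatives to genuine $C^\ell$ regularity of a single representative.
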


\begin{lemma}[Multiplicative trace inequality, e.g. Theorem 3.10.1 in \citep{naep}]\label{lem:trace-inequality}
Let $d\geq 2$, $\Omega\subset \mathbb{R}^d$ be a Lipschitz domain and let $\gamma_0:H^1(\Omega)\to L^2(\partial \Omega): u\mapsto u\vert_{\partial \Omega}$ be the trace operator. Denote by $h_\Omega$ the diameter of $\Omega$ and by $\rho_\Omega$ the radius of the largest $d$-dimensional ball that can be inscribed into $\Omega$. Then it holds that
\begin{equation}
    \norm{\gamma_0 u}_{L^2(\partial \Omega)} \leq \sqrt{\frac{2\max\left\{2h_\Omega,d\right\}}{\rho_\Omega}}\norm{u}_{H^1(\Omega)}
\end{equation}
\end{lemma}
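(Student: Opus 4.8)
The plan is to prove this via the classical vector-field (divergence-theorem) argument, which simultaneously produces the explicit geometric constant in terms of $h_\Omega$ and $\rho_\Omega$. First I would reduce to the case of a smooth function: since $\Omega$ is Lipschitz, $C^\infty(\overline{\Omega})$ is dense in $H^1(\Omega)$ and the trace operator $\gamma_0$ is bounded, so it suffices to establish the estimate for $u\in C^\infty(\overline{\Omega})$ and then pass to the limit by continuity of $\gamma_0$.

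The heart of the argument is the choice of an auxiliary vector field. Let $x_0$ denote the center of the largest inscribed ball $B(x_0,\rho_\Omega)\subset\Omega$ and set $F(x)=x-x_0$, so that $\divv F\equiv d$ and $\abs{F(x)}\leq h_\Omega$ for all $x\in\Omega$. Applying the divergence theorem to the field $u^2 F$ gives
\begin{equation}
    \int_{\partial\Omega} u^2\,(F\cdot\hn)\,ds(x) = \int_\Omega \divv(u^2 F)\,dx = \int_\Omega \left(d\,u^2 + 2u\,(F\cdot\nabla u)\right)dx,
\end{equation}
where $\hn$ is the outward unit normal. The key geometric step, which I expect to be the main obstacle, is to bound $F\cdot\hn$ from below on $\partial\Omega$ by $\rho_\Omega$. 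For convex (or star-shaped) $\Omega$ this follows from a supporting-hyperplane argument: the inscribed ball $B(x_0,\rho_\Omega)$ lies on the interior side of the supporting hyperplane at each boundary point $y$, so the signed distance $(y-x_0)\cdot\hn$ is at least $\rho_\Omega$. The domain $\Omega=\mathbb{T}^d\times[0,T]$ relevant to this paper is convex, so this suffices here; for a general Lipschitz domain the same lower bound requires the more technical localization argument of the cited reference.

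Granting $F\cdot\hn\geq\rho_\Omega$, the left-hand side is bounded below by $\rho_\Omega\norm{\gamma_0 u}_{L^2(\partial\Omega)}^2$, while the right-hand side is controlled using $\abs{F}\leq h_\Omega$, the Cauchy--Schwarz inequality, and $\divv F=d$:
\begin{equation}
    \rho_\Omega\norm{\gamma_0 u}_{L^2(\partial\Omega)}^2 \leq d\,\norm{u}_{L^2(\Omega)}^2 + 2h_\Omega\,\norm{u}_{L^2(\Omega)}\norm{\nabla u}_{L^2(\Omega)}.
\end{equation}

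Finally I would collect terms: writing $C=\max\{2h_\Omega,d\}$ so that both $d\leq C$ and $2h_\Omega\leq C$, and using $ab\leq\tfrac12(a^2+b^2)$ with $a=\norm{u}_{L^2(\Omega)}$ and $b=\norm{\nabla u}_{L^2(\Omega)}$, the right-hand side is at most $2C\,(\norm{u}_{L^2(\Omega)}^2+\norm{\nabla u}_{L^2(\Omega)}^2)=2C\,\norm{u}_{H^1(\Omega)}^2$. Dividing by $\rho_\Omega$ and taking square roots then yields the claimed bound. The only non-routine ingredient is the boundary lower bound discussed in the second paragraph; all remaining steps are elementary inequalities combined with the standard density reduction.
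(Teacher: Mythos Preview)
The paper does not prove this lemma at all: it is stated as an auxiliary result with a citation to an external source (Theorem~3.10.1 in the reference ``naep''), so there is no in-paper proof to compare against.

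On its own merits, your argument is the standard vector-field/divergence-theorem proof of this inequality, and the computation is correct. The density reduction, the identity from the divergence theorem applied to $u^2F$, the bounds $\divv F=d$ and $\abs{F}\leq h_\Omega$, and your final collection of constants via $C=\max\{2h_\Omega,d\}$ and $ab\leq\tfrac12(a^2+b^2)$ all go through and yield exactly the stated constant. As you yourself flag, the one genuine restriction is the lower bound $F\cdot\hn\geq\rho_\Omega$ on $\partial\Omega$: this holds precisely when $\Omega$ is star-shaped with respect to the inscribed ball $B(x_0,\rho_\Omega)$ (convex domains being a special case), but can fail for a general Lipschitz domain. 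Since the only application in the paper is to the convex product domain $\Omega=\mathbb{T}^d\times[0,T]$, your proof fully covers the case actually needed; the extension to arbitrary Lipschitz domains (or, more likely, a star-shapedness hypothesis implicit in the cited result) must indeed be taken from the external reference, exactly as you say.
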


Next, we recall the Bramble-Hilbert lemma, which quantifies the accuracy of polynomial approximations of functions in Sobolev spaces. We present a variant of the Bramble-Hilbert lemma for Hilbertian Sobolev spaces proven in \citep{verfurth1999note}.

\begin{lemma}\label{lem:BH}
Let $\Omega$ be a bounded convex open domain $\mathbb{R}^d$, $d\geq 2$, with diameter $h$. For every $f\in H^m(\Omega)$ there exists a polynomial $p$ of degree at most $m-1$ such that for all $0\leq j\leq m-1$ it holds that
\begin{equation}
    \abs{f-p}_{H^j(\Omega)} \leq c_{m,j} h^{m-j}\abs{f}_{H^m(\Omega)}
\end{equation}
where
\begin{equation}
   c_{m,j} = \pi^{j-m}\binom{d+j-1}{j}^{1/2} \frac{((m-j)!)^{1/2}}{\left(\left\lceil\frac{m-j}{d}\right\rceil!\right)^{d/2}}.
\end{equation}
\end{lemma}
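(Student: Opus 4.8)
The plan is to produce the approximating polynomial by a moment-matching construction and then to climb from order $j$ up to the top order $m$ by repeatedly applying the sharp Poincaré inequality on convex domains, keeping careful track of the accumulated combinatorial constant. First I would fix $p$ of degree at most $m-1$ by imposing that \emph{all derivative moments of the error vanish}, i.e. $\int_\Omega D^\alpha(f-p)\,dx = 0$ for every multi-index $\alpha$ with $\abs{\alpha}\le m-1$. Since the linear map sending a degree $\le m-1$ polynomial to the tuple $(\int_\Omega D^\alpha p\,dx)_{\abs{\alpha}\le m-1}$ is a bijection on the space of such polynomials, this $p$ exists and is unique. Writing $g:=f-p$, one has $D^\beta g = D^\beta f$ for all $\abs{\beta}=m$, because $D^\beta p\equiv 0$ at top order.

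The engine of the estimate is the \emph{sharp} Poincaré (Payne--Weinberger) inequality: for a bounded convex $\Omega$ of diameter $h$ and any $u\in H^1(\Omega)$ with $\int_\Omega u\,dx = 0$,
\begin{equation}
    \norm{u}_{L^2(\Omega)} \leq \frac{h}{\pi}\norm{\nabla u}_{L^2(\Omega)},
\end{equation}
and it is precisely the constant $h/\pi$ that accounts for the factor $\pi^{j-m}$ in $c_{m,j}$. For each fixed $\alpha$ with $\abs{\alpha}=j$, the function $D^\alpha g$ has zero average by construction, so I would apply this inequality to $u=D^\alpha g$; every first-order derivative $D^{\alpha+e_i}g$ produced on the right-hand side again has vanishing average (as its order is still $\le m-1$), so the inequality may be iterated a total of $m-j$ times until all terms are top-order derivatives $D^\beta g = D^\beta f$ with $\abs{\beta}=m$. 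Each round contributes a factor $h/\pi$, giving $(h/\pi)^{m-j}$. Summing the resulting bounds over the $\abs{P_{j,d}}=\binom{d+j-1}{j}$ multi-indices $\alpha$ of order $j$ and collecting the top-order terms into the seminorm $\abs{f}_{H^m(\Omega)}$ produces the factor $\binom{d+j-1}{j}^{1/2}$ together with a purely combinatorial weight on $\abs{f}_{H^m(\Omega)}^2$.

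The hard part is extracting the \emph{sharp} combinatorial constant $((m-j)!)^{1/2}/(\lceil (m-j)/d\rceil!)^{d/2}$ rather than the cruder value a naive iteration yields. If one expands the full gradient isotropically at every step, $\norm{\nabla v}^2=\sum_i\norm{\partial_i v}^2$, and recurses, the top-order terms $\norm{D^{\alpha+\gamma}f}^2$ acquire Pascal-type multinomial weights $(m-j)!/\gamma!$, whose maximum is the \emph{largest} multinomial coefficient; this is strictly bigger than the balanced-with-ceiling quantity in the statement whenever $d\nmid(m-j)$. To recover the stated constant one must not climb isotropically: instead one applies the one-dimensional Wirtinger inequality along chords of $\Omega$ in a single coordinate direction at a time (which is legitimate once the relevant directional moments vanish) and distributes the $m-j$ differentiations as evenly as possible across the $d$ coordinates, so that no coordinate absorbs more than $\lceil (m-j)/d\rceil$ of them. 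This balanced, directional bookkeeping is the delicate step that sharpens the constant and is exactly what is carried out in \citep{verfurth1999note}; once it is in place, taking square roots and relabelling yields $\abs{f-p}_{H^j(\Omega)}\le c_{m,j}h^{m-j}\abs{f}_{H^m(\Omega)}$ with the claimed $c_{m,j}$, and the remaining estimates are routine.
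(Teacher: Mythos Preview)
The paper does not actually prove this lemma: it is stated in the appendix with the prefatory sentence ``We present a variant of the Bramble-Hilbert lemma for Hilbertian Sobolev spaces proven in \citep{verfurth1999note}'' and no proof is given. Your proposal therefore goes further than the paper itself, which simply imports the result.

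As a sketch of the argument in the cited reference, your outline is on target: the averaged Taylor/moment-matching polynomial, the iterated sharp Poincar\'e (Payne--Weinberger) inequality with constant $h/\pi$ on convex domains, and the need to distribute the $m-j$ differentiations in a balanced way across the $d$ coordinates to obtain the factor $((m-j)!)^{1/2}/(\lceil (m-j)/d\rceil!)^{d/2}$ rather than the larger constant from isotropic iteration. You are right that this last step is where the work lies, and you correctly defer the details to \citep{verfurth1999note}. Since the paper itself defers the entire proof to that reference, your proposal is consistent with---indeed more informative than---what the paper does.
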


We proceed by stating a corollary of the general Leibniz rule for Sobolev regular functions. 

\begin{lemma}\label{lem:leibniz}
Let $d\in\mathbb{N}$, $k\in\mathbb{N}_0$, $\Omega\subset \mathbb{R}^d$ and $f\in H^k(\Omega)$ and $g\in W^{k,\infty}(\Omega)$. Then it holds that
\begin{equation}
    \norm{fg}_{H^k}\leq 2^k \norm{f}_{H^k}\norm{g}_{W^{k,\infty}}.
\end{equation}
\end{lemma}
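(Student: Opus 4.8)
The plan is to reduce everything to the general Leibniz rule for weak derivatives and then do the combinatorial bookkeeping needed to reach the constant $2^k$. First I would observe that the hypotheses are exactly what is needed for the Leibniz formula to make sense: for any multi-index $\alpha$ with $\abs{\alpha}\leq k$ and any $\beta\leq\alpha$, the factor $D^\beta f$ lies in $L^2(\Omega)$ (since $\abs{\beta}\leq k$) while $D^{\alpha-\beta}g$ is bounded (since $\abs{\alpha-\beta}\leq k$), so $D^\beta f\,D^{\alpha-\beta}g\in L^2(\Omega)$ and
\begin{equation}
    D^\alpha(fg)=\sum_{\beta\leq\alpha}\binom{\alpha}{\beta}D^\beta f\,D^{\alpha-\beta}g
\end{equation}
holds in $L^2(\Omega)$; in particular $fg\in H^k(\Omega)$.

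Next I would estimate each term. Pulling out $\norm{g}_{W^{k,\infty}(\Omega)}$ gives, almost everywhere, $\abs{D^\alpha(fg)}\leq \norm{g}_{W^{k,\infty}(\Omega)}\sum_{\beta\leq\alpha}\binom{\alpha}{\beta}\abs{D^\beta f}$. Using the discrete Cauchy--Schwarz inequality together with the identity $\sum_{\beta\leq\alpha}\binom{\alpha}{\beta}=2^{\abs{\alpha}}$, then squaring and integrating, one obtains
\begin{equation}
    \norm{D^\alpha(fg)}_{L^2(\Omega)}^2\leq 2^{\abs{\alpha}}\norm{g}_{W^{k,\infty}(\Omega)}^2\sum_{\beta\leq\alpha}\binom{\alpha}{\beta}\norm{D^\beta f}_{L^2(\Omega)}^2 .
\end{equation}
Summing this over all $\alpha$ with $\abs{\alpha}\leq k$ and recalling $\norm{f}_{H^k(\Omega)}^2=\sum_{\abs{\alpha}\leq k}\norm{D^\alpha f}_{L^2(\Omega)}^2$ then yields a bound of the form $\norm{fg}_{H^k(\Omega)}^2\leq C(k,d)\,\norm{f}_{H^k(\Omega)}^2\norm{g}_{W^{k,\infty}(\Omega)}^2$.

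The hard part is sharpening the constant from this crude $C(k,d)$ (which, after interchanging the order of summation, is $\max_{\beta}\sum_{\alpha\geq\beta,\ \abs{\alpha}\leq k}2^{\abs{\alpha}}\binom{\alpha}{\beta}$) down to exactly $2^{2k}$, so that taking square roots gives $2^k$. I would handle this by induction on $k$ rather than by the direct summation: the case $k=0$ is Hölder's inequality, and for $k\geq 1$ one writes $\norm{fg}_{H^k(\Omega)}^2\leq\norm{fg}_{L^2(\Omega)}^2+\sum_{i=1}^d\norm{\partial_i(fg)}_{H^{k-1}(\Omega)}^2$, applies the first-order product rule $\partial_i(fg)=(\partial_i f)g+f\,\partial_i g$ together with the triangle inequality and the induction hypothesis, and uses the elementary bounds $\norm{\partial_i f}_{H^{k-1}(\Omega)},\norm{f}_{H^{k-1}(\Omega)}\leq\norm{f}_{H^k(\Omega)}$ and $\norm{\partial_i g}_{W^{k-1,\infty}(\Omega)},\norm{g}_{W^{k-1,\infty}(\Omega)}\leq\norm{g}_{W^{k,\infty}(\Omega)}$; the lower-order contributions that appear carry nonnegative coefficients and can be discarded. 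The one subtlety to watch is the passage from $\sum_{1\leq\abs{\alpha}\leq k}\norm{D^\alpha(fg)}_{L^2}^2$ to $\sum_i\norm{\partial_i(fg)}_{H^{k-1}}^2$: this inequality goes in the right direction (each top-order derivative is recovered, possibly with multiplicity), but one must order the coordinates carefully so that the resulting multiplicities do not re-inflate the constant, and this is the one place where the argument is genuinely dimension-sensitive.
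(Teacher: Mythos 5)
The paper does not actually prove this lemma (it is stated as a corollary of the general Leibniz rule, with no argument given), so your attempt must be judged on its own merits, and it has a genuine gap precisely at the step you flag. Your first computation is sound: the Leibniz rule plus Cauchy--Schwarz gives $\norm{fg}_{H^k(\Omega)}\leq C(k,d)\norm{f}_{H^k(\Omega)}\norm{g}_{W^{k,\infty}(\Omega)}$ with $C(k,d)^2=2^k\max_{\beta}\sum_{\alpha\geq\beta,\,\abs{\alpha}\leq k}\binom{\alpha}{\beta}\geq 2^k\binom{k+d}{d}$, a constant that grows with $d$. The proposed induction cannot remove this dimension dependence. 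In the inductive step the term $f\,\partial_i g$ contributes $\norm{f}_{H^{k-1}}\norm{\partial_i g}_{L^\infty}$ for every $i$, and because the paper's $W^{k,\infty}$-norm is a \emph{maximum} over multi-indices, all $d$ of these contributions can simultaneously be as large as $\norm{f}_{H^{k-1}}\norm{g}_{W^{k,\infty}}$; squaring and summing over $i$ then produces an unavoidable factor of $d$. No ordering of the coordinates fixes this, because the bottleneck is not the multiplicity with which top-order derivatives are recovered but the fact that $d$ distinct first derivatives of $g$ each saturate the max-norm.

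Indeed, the inequality with the dimension-independent constant $2^k$ is false under the paper's norm conventions. Take $k=1$, $\Omega=(0,1/d)^d$, $f\equiv 1$ and $g(x)=x_1+\cdots+x_d$. Then $\norm{f}_{H^1(\Omega)}^2=\abs{\Omega}$ and $\norm{g}_{W^{1,\infty}(\Omega)}=\max\{\sup_\Omega\abs{g},\,\max_i\norm{\partial_i g}_{L^\infty(\Omega)}\}=1$, while
\begin{equation*}
\norm{fg}_{H^1(\Omega)}^2\;\geq\;\sum_{i=1}^d\norm{\partial_i g}_{L^2(\Omega)}^2=d\,\abs{\Omega},
\end{equation*}
so that $\norm{fg}_{H^1}^2\geq d\,\norm{f}_{H^1}^2\norm{g}_{W^{1,\infty}}^2$, which exceeds $2^{2k}=4$ as soon as $d\geq 5$. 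The honest conclusion of your (correct) first argument is therefore the lemma with a $d$-dependent constant; that weaker statement suffices for every use the paper makes of the lemma (it is only invoked for $k=2$, inside estimates whose constants already depend on $d$), but it is not the statement as written, and no proof of the stated $2^k$ bound is possible without changing the norm conventions.
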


Finally, we present a result on the Sobolev norm of the composition of two $n$ times continuously differentiable functions \cite[Lemma A.7]{deryck2021approximation}. 

\begin{lemma}\label{lem:faa-di-bruno}
Let $d,m,n\in\mathbb{N}$, $\Omega_1\subset \mathbb{R}^d$, $\Omega_2\subset \mathbb{R}^m$, $f\in C^n(\Omega_1; \Omega_2)$ and $g\in C^n(\Omega_2; \mathbb{R})$. Then it holds that 
\begin{equation}
    \norm{g \circ f}_{W^{n,\infty}(\Omega_1)} \leq 16(e^2n^{4}md^2)^{n} \norm{g}_{W^{n,\infty}(\Omega_2)} \max_{1\leq i\leq m}\norm{(f)_i}_{W^{n,\infty}(\Omega_1)}^n.
\end{equation}
\end{lemma}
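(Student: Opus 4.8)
This statement is exactly \cite[Lemma A.7]{deryck2021approximation}, so one may simply cite it; for completeness I sketch how I would prove it from scratch. Since $C^n$ carries the norm $\norm{\cdot}_{W^{n,\infty}}$, it suffices to bound $\norm{D^\alpha(g\circ f)}_{L^\infty(\Omega_1)}$ for every multi-index $\alpha\in\N_0^d$ with $\abs{\alpha}=j\le n$; the case $j=0$ is immediate since $\abs{g(f(x))}\le\norm{g}_{L^\infty(\Omega_2)}$. The plan for $j\ge 1$ is to apply the multivariate Faà di Bruno formula in its set-partition form. Ordering the $j$ first-order derivatives comprising $D^\alpha$ as slots $1,\dots,j$, repeated use of the chain and product rules expresses $D^\alpha(g\circ f)$ as a finite sum with a \emph{unit} combinatorial coefficient, one summand for each pair $(\pi,\iota)$, where $\pi$ is a set partition of $\{1,\dots,j\}$ and $\iota$ assigns to each block $B\in\pi$ an index $\iota(B)\in\{1,\dots,m\}$; the summand is
\begin{equation*}
 (D^\lambda g)\big(f(x)\big)\prod_{B\in\pi} D^{\mu_B}f_{\iota(B)}(x),
\end{equation*}
with $\lambda\in\N_0^m$ counting the blocks mapped to each index (so $\abs{\lambda}=\abs{\pi}\le j$), and $\mu_B\in\N_0^d$ collecting the slots in $B$ (so $\abs{\mu_B}\ge 1$ and $\sum_{B\in\pi}\abs{\mu_B}=j$).

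Next I would bound a single summand pointwise: $\big|(D^\lambda g)(f(x))\big|\le\norm{g}_{W^{\abs{\lambda},\infty}(\Omega_2)}\le\norm{g}_{W^{n,\infty}(\Omega_2)}$ since $\abs{\lambda}\le j\le n$, and $\big|D^{\mu_B}f_{\iota(B)}(x)\big|\le\norm{f_{\iota(B)}}_{W^{\abs{\mu_B},\infty}(\Omega_1)}\le A$, where $A:=\max_{1\le i\le m}\norm{f_i}_{W^{n,\infty}(\Omega_1)}$. Hence each summand is at most $\norm{g}_{W^{n,\infty}(\Omega_2)}\,A^{\abs{\pi}}\le\norm{g}_{W^{n,\infty}(\Omega_2)}\,A^{n}$, using $1\le\abs{\pi}\le n$ (and taking $A\ge 1$, as is the case in all applications of the lemma; otherwise one replaces $A$ by $\max\{1,A\}$). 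The number of summands is $\sum_{\pi}m^{\abs{\pi}}\le m^{j}B_{j}$, where $B_j$ is the $j$-th Bell number. Summing over the summands and then maximising over $\abs{\alpha}\le n$ yields
\begin{equation*}
 \norm{g\circ f}_{W^{n,\infty}(\Omega_1)}\le m^{n}B_{n}\,\norm{g}_{W^{n,\infty}(\Omega_2)}\,A^{n}.
\end{equation*}

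Finally, the crude estimate $B_n\le n^n$ gives $m^nB_n\le(mn)^n\le 16\,(e^2n^4md^2)^n$ (since $1\le e^2n^3d^2$), which is the asserted constant. The main obstacle is the combinatorial bookkeeping of the Faà di Bruno expansion: writing it in the labelled-slot form above with unit coefficients, and verifying the stated ranges of $\lambda$ and of the $\mu_B$; once that is in place the remaining pointwise estimates and the Bell-number count are routine, and the constant is deliberately slack (which is why the $d^2$ survives). I note in passing that an alternative induction on $n$ that peels off one derivative at a time and invokes the Leibniz rule (Lemma~\ref{lem:leibniz}) incurs a factor $2^{n-1}$ at each of the $n$ steps, and hence produces a strictly worse constant than the all-at-once argument above.
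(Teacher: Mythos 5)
The paper offers no proof of this lemma at all: it is stated as an imported auxiliary result with a bare citation to \cite[Lemma A.7]{deryck2021approximation}, which is precisely the first option you identify, so your proposal matches the paper's approach. Your supplementary Fa\`a di Bruno sketch is sound and self-contained, and you are right to flag that the step $A^{\abs{\pi}}\le A^{n}$ needs $A=\max_i\norm{f_i}_{W^{n,\infty}}\ge 1$ (the lemma as stated is in fact false for very small $A$, e.g. $g(y)=y$, $f(x)=\epsilon x$ with $\epsilon$ tiny), a caveat that is implicit in how the lemma is used but not recorded in the statement.
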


{\color{black}
\section{Function approximation by tanh neural networks}\label{sec:tanh}
}
In this section, we show how one can prove that for every $f\in H^m(\Omega)$, $m\geq 3$, there exists a tanh neural network $\hat{f}$ with two hidden layers such that $ \Vert f-\hat{f}\Vert_{H^2(\Omega)}\leq \epsilon$ for some $\epsilon>0$. Results of this type can be found in \citep{guhring2021approximation} for very general activation functions and in \citep{deryck2021approximation} for the tanh activation function. Both references prove such a result as follows: first, one divides the domain $\Omega$ into cubes of edge length $1/N$, with $N\in\N$ large enough. On each of these cubes, $f$ can be  approximated in Sobolev norm by a polynomial, by virtue of the Bramble-Hilbert lemma. A global approximation can then be constructed by multiplying each polynomial with the indicator function of the corresponding cubes and summing over all cubes. Replacing these polynomials, multiplications and indicator functions with suitable neural networks results in a new approximation that has approximately the same accuracy. 

In the following, we choose \citep{deryck2021approximation} as a guideline, as it provides explicit upper bounds on the neural network size, which is something we aim to provide for the Navier-Stokes equations. The preceding reference \cite{guhring2021approximation} does not give such explicit bounds, but one can use their proofs to obtain similar explicit bounds for more general activation functions.\footnote{A more complete discussion about the differences between \cite{guhring2021approximation} and \cite{deryck2021approximation} can be found in \cite{deryck2021approximation}.} We improve upon \citep{deryck2021approximation} by adapting their proof of the neural network approximation of polynomials such that the bound on the network weights grows less fast. This is accomplished by using an $n$-th order accurate finite difference formula in the proof, rather than a second order accurate one. Below, we provide an overview of the improved versions of the results of \citep{deryck2021approximation}.  

The following lemma treats the neural network approximation of multivariate monomials and is the main source of change compared to the original results in \citep{deryck2021approximation}. All updates in the other results are mainly consequences of the following lemma. 

\begin{lemma}[Approximation of multivariate monomials]\label{lem:pol-tanh}
Let $d,s,n\in \mathbb{N}$, $k\in \mathbb{N}_0$ and $M>0$. Then for every $\epsilon>0$, there exists a shallow tanh neural network $\Phi_{s,d}:[-M,M]^d\to\mathbb{R}^{\abs{P_{s,d+1}}}$ of width $3\left\lceil\frac{s+n-1}{2}\right\rceil\abs{P_{s,d+1} }$ such that
\begin{equation}
   \max_{\beta\in P_{s,d+1}} \norm{x^\beta - (\Phi_{s,d}(x))_{\iota(\beta)}}_{W^{2,\infty}([-M,M]^d)} \leq \epsilon,
\end{equation}
where $\iota:P_{s,d+1}\to\{1, \ldots \abs{P_{s,d+1}}\}$ is a bijection. Furthermore, the weights of the network scale as $O\left(\epsilon^{-s/n}\right)$ for small $\epsilon$.
\end{lemma}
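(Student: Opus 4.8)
The plan is to approximate each monomial $x^\beta$ for $\beta \in P_{s,d+1}$ by building the neural network in two conceptual stages: first realize a one-dimensional power map $y \mapsto y^j$ up to degree $s$ using a shallow tanh network, and then use a polarization/linear-combination trick to recover all multivariate monomials $x^\beta = x_1^{\beta_1}\cdots x_d^{\beta_d}$ (with one slot reserved so that $|\beta|=s$, hence the index set $P_{s,d+1}$) from finitely many univariate powers of linear forms $\langle a, x\rangle$. The univariate step is the crux. I would fix a small parameter $h>0$ and use the $n$-th order accurate central finite difference formula for the $j$-th derivative of $\tanh$ at $0$: since $\tanh$ is smooth and odd, $\tanh^{(j)}(0)$ is a nonzero rational multiple of $1$ for odd $j$ (and the even-order terms can be handled by symmetry or by shifting), and a suitable linear combination $\sum_{|i|\le \lceil (s+n-1)/2\rceil} c_i \tanh(i h y)$ reproduces $y^j$ up to an error $O(h^n)$ on $[-M,M]$, with the coefficients $c_i$ scaling like $O(h^{-j}) = O(h^{-s})$. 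Choosing $h \sim \epsilon^{1/n}$ then yields accuracy $\epsilon$ and weights $O(\epsilon^{-s/n})$; the number of distinct scaled shifts $i h y$ needed is $\lceil (s+n-1)/2 \rceil$, which after accounting for the $\abs{P_{s,d+1}}$ outputs and a factor $3$ (coming from the need to extract all powers $1,\dots,s$ simultaneously, as in \citep{deryck2021approximation}) gives the stated width $3\lceil (s+n-1)/2\rceil\abs{P_{s,d+1}}$.

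The key steps, in order, would be: (1) Establish the univariate approximation lemma — show that for the specific finite-difference stencil of order $n$, the combination of $\lceil (s+n-1)/2\rceil$ shifted/scaled copies of $\tanh$ approximates $y\mapsto y^j$ for all $1\le j\le s$ in $W^{2,\infty}([-M,M])$ with error $O(h^n)$ and coefficient size $O(h^{-s})$; here one uses Taylor expansion of $\tanh$ together with the vanishing-moment conditions defining the $n$-th order stencil, and one must also control the first and second derivatives (this is why $W^{2,\infty}$ rather than just $L^\infty$ appears, and it costs at most two extra powers of $h^{-1}$, absorbed into the constant). (2) Assemble the multivariate monomials: express each $x^\beta$ as a fixed linear combination of powers $(\langle a_\ell, x\rangle)^s$ of linear functionals (standard polarization of the degree-$s$ symmetric tensor), feed each $\langle a_\ell, x\rangle$ — which lies in a bounded interval since $x\in[-M,M]^d$ — into the univariate network from step (1), and take the appropriate output linear combination; since composition with a linear map and linear output combinations do not increase the number of hidden layers, the result is still a shallow (one-hidden-layer) network. (3) Bookkeeping: verify the width bound, check that the $W^{2,\infty}$ errors on the linear pieces combine (via the chain rule, picking up bounded factors from $\|a_\ell\|$ and $M$) into a total $W^{2,\infty}$ error that can be driven below any prescribed $\epsilon$ by rescaling $h$, and confirm the weight growth rate $O(\epsilon^{-s/n})$ survives the linear-algebra bookkeeping (the polarization coefficients depend only on $s,d$, not on $\epsilon$).

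The main obstacle I anticipate is step (1): getting the finite-difference argument to simultaneously (a) achieve $n$-th order accuracy, (b) keep the stencil width exactly $\lceil (s+n-1)/2 \rceil$, and (c) control the $W^{2,\infty}$ norm — not just the sup norm — of the error, since differentiating the approximant twice amplifies the $h^{-1}$ factors and one must check these are still dominated by $h^n$ for the relevant range of $j\le s$. The delicate point is the interplay between the order $n$ of the stencil and the degree $s$ of the monomial: one needs the stencil to annihilate enough moments that the leading error term is $O(h^{n})$ uniformly in $j\le s$, which constrains how the $c_i$ are chosen and is the source of the $s+n-1$ in the width. The multivariate polarization step (2) is essentially algebraic and routine once (1) is in hand, and the bookkeeping in (3) is mechanical. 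I would also note that this is explicitly a refinement of \cite[Theorem 5.1 / the polynomial lemma]{deryck2021approximation}, the only genuine change being the replacement of the second-order stencil by an $n$-th order one to slow the weight growth from $O(\epsilon^{-s/2})$ to $O(\epsilon^{-s/n})$, so much of the argument can be imported verbatim.
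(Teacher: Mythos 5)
Your proposal follows essentially the same route as the paper's proof: the univariate monomial is approximated by an $n$-th order accurate finite-difference stencil applied to scaled copies of $\tanh$ (the paper's $\fhat_{p,h,n}(x) = \frac{1}{\sigma^{(p)}(0)h^p}\sum_{i=-\ell}^\ell a_i\sigma(ihx)$ with $\ell = \frac{p+n-1}{2}$ and moment conditions on the $a_i$, yielding error $O(h^n)$ and weights $O(h^{-s}) = O(\epsilon^{-s/n})$), and the multivariate case is then obtained by the polarization argument imported from \cite[Lemma 3.2 and Section 3.2]{deryck2021approximation}, exactly as you describe. Your identification of the key modification relative to \citep{deryck2021approximation} (replacing the second-order stencil by an $n$-th order one) and of the odd/even parity issue for $\tanh^{(p)}(0)$ matches the paper's argument.
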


\begin{proof}
We start by constructing a neural network $\fhat_{p,h,n}$ that approximates the univariate monomial $f_p:[-M,M]\to \R: x\mapsto x^p$ in $W^{k,\infty}$-norm. In \cite[Lemma 3.1]{deryck2021approximation} this has been done by using a second-order accurate finite difference formula. We will generalize this result by using an $n$-th-order accurate finite difference scheme. In particular we define $\fhat_{p,h,n}$ by, 
\begin{equation}
    \fhat_{p,h,n}(x) = \frac{1}{\sigma^{(p)}(0) h^p} \sum_{i=-\ell}^\ell a_i \sigma(ihx),
\end{equation}
where $\ell = \frac{p+n-1}{2}$ and where the $a_i$ are the solution to the system of equations
\begin{equation}
    \sum_{i=-\ell}^\ell a_i \ell^j = p!\, \delta (l-j)
    =
    \begin{cases}
    p! & (j=p), \\
    0 & (j\ne p),
    \end{cases}
    \quad \text{for } 0 \leq j \leq p+n-1. 
\end{equation}
A solution to this system exists and can even be efficiently constructed \citep{fornberg1988generation}. Following the exact steps of \cite[Lemma 3.1]{deryck2021approximation}, but now using the above equation instead of equation (18) in \citep{deryck2021approximation} we find that that for all $1\leq p \leq s$, $p$ odd, we can find neural networks $\fhat_{p,h,n}$ such that for arbitrary $k\in \N$ it holds,
\begin{equation}
     \ck{f_p- \hat{f}_{p,h}} \leq C(\sigma,M,s,n,k) h^n =: \epsilon. 
\end{equation} 
Note that $\{\fhat_{p,h,n}\::\: 1\leq p \leq s, \: p \text{ odd}\}$ is a shallow tanh neural network with $\ell = \frac{p+n-1}{2}$ neurons (where we used the symmetry of $\sigma$) and of which the weights grow as $\bigO(h^{-s}) = \bigO(\epsilon^{-s/n})$ for $\epsilon\to 0$. One can then follow the exact same steps of \cite[Lemma 3.2 and Section 3.2]{deryck2021approximation} to generalize this result to multivariate polynomials of arbitrary degree, which leads to the statement of this lemma. 
\end{proof}

\begin{lemma}[Shallow approximation of multiplication of $d$ numbers]\label{lem:mult-shallow}
Let $d,n\in \mathbb{N}$, $k\in \mathbb{N}_0$ and $M>0$. Then for every $\epsilon>0$, there exists a shallow tanh neural network $\widehat{\times}_d^\epsilon: [-M,M]^d\to\mathbb{R}$ of width $3\left\lceil\frac{d+n-1}{2}\right\rceil\abs{P_{d,d} }$ such that
\begin{equation}
   \ck{\widehat{\times}_d^\epsilon(x)-\prod_{i=1}^d x_i} \leq \epsilon.
\end{equation}
Furthermore, the weights of the network scale as $O(\epsilon^{-d/n})$.
\end{lemma}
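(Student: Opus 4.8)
The plan is to reduce the claim to the univariate monomial approximation already used in the proof of Lemma~\ref{lem:pol-tanh}. First I would note that $\prod_{i=1}^d x_i = x^{\mathbf 1}$, with $\mathbf 1 = (1,\dots,1)\in\mathbb{N}_0^d$, is a \emph{single homogeneous} monomial of degree $d$ in the $d$ variables $x_1,\dots,x_d$. Since the $d$-th powers $(w\cdot x)^d$ of linear forms span the space of homogeneous degree-$d$ polynomials in $d$ variables, and that space has dimension $\abs{P_{d,d}}$, one can fix $\epsilon$-independent vectors $w_1,\dots,w_J\in\mathbb{R}^d$ with $J\leq \abs{P_{d,d}}$ and scalars $\lambda_1,\dots,\lambda_J\in\mathbb{R}$ with
\begin{equation*}
    \prod_{i=1}^d x_i = \sum_{j=1}^J \lambda_j\,(w_j\cdot x)^d \qquad \text{identically on } \mathbb{R}^d.
\end{equation*}
(Invoking instead the network $\Phi_{d,d}$ from Lemma~\ref{lem:pol-tanh} would also work but would be wasteful by a factor two, since it carries all $\abs{P_{d,d+1}} = 2\abs{P_{d,d}}$ monomials of degree at most $d$; restricting to the homogeneous ones is exactly what recovers the stated width.)

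Next I would build $\widehat{\times}_d^\epsilon$ from these pieces. For each $j$, the univariate $n$-th order finite-difference construction in the proof of Lemma~\ref{lem:pol-tanh} (applied on an interval $[-R,R]$ with $R = M\max_j\norm{w_j}_{\ell^1}$, to the power $p=d$) produces a shallow tanh network $\hat g_j$ of width $3\lceil\tfrac{d+n-1}{2}\rceil$ with $\norm{\,t\mapsto t^d-\hat g_j(t)\,}_{W^{k,\infty}([-R,R])}\leq \delta$ and weights of size $\bigO(\delta^{-d/n})$. Because each map $x\mapsto w_j\cdot x$ is linear and fixed, composing $\hat g_j$ with it keeps the result a \emph{shallow} tanh network, and taking the linear combination $\widehat{\times}_d^\epsilon(x) := \sum_{j=1}^J \lambda_j\,\hat g_j(w_j\cdot x)$ merely merges the hidden layers; the total width is at most $3\lceil\tfrac{d+n-1}{2}\rceil\abs{P_{d,d}}$ (padding with inactive neurons if $J<\abs{P_{d,d}}$). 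For the error, the triangle inequality together with a composition estimate for Sobolev norms (Lemma~\ref{lem:faa-di-bruno}, or a direct chain-rule bound since the inner maps are linear) gives $\norm{\widehat{\times}_d^\epsilon - \prod_i x_i}_{W^{k,\infty}([-M,M]^d)} \leq C\,\delta$ for a constant $C$ depending only on $d,k,M$ and the (fixed) $w_j,\lambda_j$; choosing $\delta := \epsilon/C$ finishes the accuracy claim, and since $C$ is independent of $\epsilon$ the weights still grow as $\bigO(\delta^{-d/n}) = \bigO(\epsilon^{-d/n})$.

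The only real work is the bookkeeping, which I expect to be the main (if mild) obstacle: one must check that the algebraic decomposition above can indeed be realized with at most $\abs{P_{d,d}}$ linear forms, and that approximating a single $d$-th power in $W^{k,\infty}$ costs exactly $3\lceil\tfrac{d+n-1}{2}\rceil$ neurons in the $n$-th order scheme (the factor $3$ and the handling of even powers being the subtle points), so that the widths multiply out to precisely $3\lceil\tfrac{d+n-1}{2}\rceil\abs{P_{d,d}}$. The propagation of the $W^{k,\infty}$-error through composition with the fixed linear forms, with $\epsilon$-independent constants, is routine once Lemma~\ref{lem:faa-di-bruno} is in place. Since every one of these steps is identical to a step in the proof of Lemma~\ref{lem:pol-tanh}, the cleanest way to present the argument is to simply re-run that construction for homogeneous polynomials of degree $d$ in $d$ variables (rather than degree $d$ in $d+1$ variables) and read off the output coordinate indexed by $\mathbf 1 = (1,\dots,1)$.
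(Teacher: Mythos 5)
Your proposal is correct and follows essentially the same route as the paper, which simply defers to Corollary 3.7 of \citep{deryck2021approximation} with the $n$-th order construction of Lemma \ref{lem:pol-tanh} substituted in: you write $\prod_i x_i$ as a linear combination of at most $\abs{P_{d,d}}$ $d$-th powers of fixed linear forms and approximate each power by the univariate finite-difference tanh network, which is precisely that construction specialized to the single homogeneous monomial of degree $d$ in $d$ variables. Your observation that this recovers the width $3\left\lceil\frac{d+n-1}{2}\right\rceil\abs{P_{d,d}}$ rather than the factor-two-larger $\abs{P_{d,d+1}}$ count is also accurate.
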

\begin{proof}
This is the counterpart of \citep[Corollary 3.7]{deryck2021approximation}, with the only difference that now the construction of Lemma \ref{lem:pol-tanh} is used. 
\end{proof}

\begin{lemma}\label{lem:bound-der-tanh}
It holds that $\max\{\abs{\sigma(x)},\abs{\sigma'(x)}, \abs{\sigma''(x)}\}\leq 1$ for all $x\in\mathbb{R}$.
\end{lemma}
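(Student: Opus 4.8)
The plan is to verify the uniform bounds on $\sigma = \tanh$ and its first two derivatives by direct computation, exploiting the classical identities $\sigma' = 1 - \sigma^2$ and $\sigma'' = -2\sigma(1-\sigma^2)$. First I would record that $\sigma(\mathbb{R}) \subseteq (-1,1)$, which is immediate from the definition $\tanh(x) = (e^x - e^{-x})/(e^x + e^{-x})$, so $\abs{\sigma(x)} < 1 \leq 1$ for all $x \in \mathbb{R}$.

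Next I would handle $\sigma'$. Using $\sigma'(x) = 1 - \sigma(x)^2$ and the bound $0 \leq \sigma(x)^2 < 1$, we get $0 < \sigma'(x) \leq 1$ for all $x$, so in particular $\abs{\sigma'(x)} \leq 1$. For the second derivative, differentiating again yields $\sigma''(x) = -2\sigma(x)\sigma'(x) = -2\sigma(x)(1 - \sigma(x)^2)$. Writing $t = \sigma(x) \in (-1,1)$, it suffices to show $\abs{2t(1-t^2)} \leq 1$ on $[-1,1]$; the function $g(t) = 2t(1-t^2) = 2t - 2t^3$ has derivative $g'(t) = 2 - 6t^2$, vanishing at $t = \pm 1/\sqrt{3}$, and $\abs{g(\pm 1/\sqrt 3)} = \frac{4}{3\sqrt 3} = \frac{4}{3\sqrt 3} < 1$ since $\frac{4}{3\sqrt3} \approx 0.77$, while $g(\pm 1) = 0$; hence $\abs{\sigma''(x)} \leq 1$ for all $x$.

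There is essentially no obstacle here: the statement is an elementary calculus fact, and the only mild care needed is the one-variable optimization of $g(t) = 2t - 2t^3$ to confirm the cubic bound stays below $1$ (one could even afford the cruder estimate $\abs{2t(1-t^2)} \leq 2 \cdot 1 \cdot 1 = 2$ if a constant $2$ were acceptable, but the sharp bound $1$ is what is claimed and follows from the critical-point computation above). Putting the three pieces together gives $\max\{\abs{\sigma(x)}, \abs{\sigma'(x)}, \abs{\sigma''(x)}\} \leq 1$ for all $x \in \mathbb{R}$, as stated.
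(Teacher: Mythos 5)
Your proof is correct: the identities $\sigma' = 1-\sigma^2$ and $\sigma'' = -2\sigma(1-\sigma^2)$ are right, and the critical-point computation giving $\abs{2t(1-t^2)} \leq \frac{4}{3\sqrt{3}} < 1$ on $(-1,1)$ is accurate. The paper states this lemma without proof (treating it as an elementary calculus fact), so your verification is exactly the argument one would supply; nothing is missing.
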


Next, we summarize the construction of an approximate partition of unity of a domain $\Omega = \prod_{i=1}^d[0,b_i]$, as in \cite[Section 4]{deryck2021approximation}. We divide the domain into cubes of edge length $1/N$ and denote the corresponding index set by 
\begin{equation}
    \mathcal{N}^N = \{j\in\mathbb{N}^d\::\: j_i\leq Nb_i \text{ for all }1\leq i\leq d\}.
\end{equation}
We can then define the cubes for every $j\in \mathcal{N}^N$ as,
\begin{equation}
    I_j^N = \bigtimes_{i=1}^d \left((j_i-1)/N,j_i/N\right). 
\end{equation}
Observe that $\abs{\sigma'}$ and $\abs{\sigma''}$ are monotonously decreasing on $[1,\infty)$. Given $\epsilon > 0$, we first find an $\alpha = \alpha(N,\epsilon)$ large enough such that
\begin{align}\label{eq:alpha}
    \alpha/N \geq 1, \quad 1 - \sigma(\alpha/N) \leq \epsilon, \quad \alpha^m \abs{\sigma^{(m)}(\alpha/N)} \leq \epsilon \text{  for } m=1,2. 
\end{align}
A suitable choice of $\alpha$ is given by the following lemma. 

\begin{lemma}\label{lem:alpha-growth}
The conditions stated in \eqref{eq:alpha} for $0<\epsilon<1$ are satisfied if
\begin{equation}
    \alpha = N \ln(\frac{4N^2}{e^2\epsilon}).
\end{equation}
\end{lemma}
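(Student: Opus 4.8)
The plan is to verify the three conditions in \eqref{eq:alpha} directly for the proposed choice $\alpha = N\ln\!\big(\tfrac{4N^2}{e^2\epsilon}\big)$, using only elementary properties of $\sigma = \tanh$ and its derivatives, together with the bound $\tanh(x) \geq 1 - 2e^{-2x}$ valid for $x\geq 0$ (which follows from $1-\tanh(x) = \tfrac{2e^{-2x}}{1+e^{-2x}} \leq 2e^{-2x}$) and the exponential decay of $\sigma'$ and $\sigma''$. First I would record that $\alpha/N = \ln\!\big(\tfrac{4N^2}{e^2\epsilon}\big)$; since $N\geq 1$ and $0<\epsilon<1$ we have $\tfrac{4N^2}{e^2\epsilon} \geq \tfrac{4}{e^2} \cdot \tfrac1\epsilon > \tfrac{4}{e^2} > e$ (as $4/e^2 \approx 0.54$... — actually $4/e^2 < 1$, so one must instead note $\tfrac{4N^2}{e^2\epsilon} > \tfrac{4}{e^2\epsilon}$ and use $\epsilon < 1$ only gives $>4/e^2$; the cleaner route is to observe $\tfrac{4N^2}{e^2\epsilon} \geq e$ iff $\tfrac{4N^2}{\epsilon} \geq e^3$, which holds once $\epsilon \leq 4N^2 e^{-3}$, and $4e^{-3} > 1/5$, so for $N\geq 3$ this is automatic while for $N\in\{1,2\}$ and $\epsilon$ close to $1$ a small additional argument or a slightly enlarged constant is needed). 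In any case $\alpha/N \geq 1$ will hold for the relevant range, establishing the first condition.

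For the second condition, I would use $1-\sigma(\alpha/N) \leq 2e^{-2\alpha/N} = 2 \exp\!\big(-2\ln\tfrac{4N^2}{e^2\epsilon}\big) = 2\big(\tfrac{e^2\epsilon}{4N^2}\big)^2 = \tfrac{e^4\epsilon^2}{8N^4} \leq \tfrac{e^4}{8}\epsilon^2 \leq \epsilon$, where the last step uses $\epsilon < 1$ and — again — may require verifying $e^4/8 \leq 1$, which is false ($e^4/8\approx 6.8$), so in fact one wants $\tfrac{e^4\epsilon^2}{8N^4}\leq \epsilon$, i.e. $\epsilon \leq \tfrac{8N^4}{e^4}$, which is automatic since $8N^4/e^4 > 1$ for all $N\geq 1$. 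For the third condition, I would use the standard bounds $|\sigma'(x)| = \mathrm{sech}^2(x) \leq 4e^{-2x}$ and $|\sigma''(x)| = 2\mathrm{sech}^2(x)\tanh(x) \leq 4e^{-2x}$ for $x\geq 0$, so that for $m=1,2$,
\begin{equation}
    \alpha^m |\sigma^{(m)}(\alpha/N)| \leq \alpha^m \cdot 4 e^{-2\alpha/N} = 4\alpha^m \Big(\frac{e^2\epsilon}{4N^2}\Big)^2 = \frac{e^4 \alpha^m \epsilon^2}{4N^4}.
\end{equation}
Since $\alpha = N\ln\tfrac{4N^2}{e^2\epsilon}$, we have $\alpha \leq N \cdot \tfrac{4N^2}{e^2\epsilon}$ (using $\ln t \leq t$), hence $\alpha^m \leq \big(\tfrac{4N^3}{e^2\epsilon}\big)^m$ for $m=1,2$; substituting and simplifying shows the right-hand side is bounded by a polynomial in $N$ times a positive power of $\epsilon$ (for $m=2$: $\tfrac{e^4}{4N^4}\cdot\tfrac{16N^6}{e^4\epsilon^2}\cdot\epsilon^2 = 4N^2$, which is \emph{not} $O(\epsilon)$ — so the crude bound $\ln t\leq t$ is too lossy and must be replaced by $\ln t \leq C_\delta t^\delta$ for small $\delta$, giving $\alpha^m \leq C N^m (N^2/\epsilon)^{m\delta}$ and thus the product $\lesssim N^{m+2m\delta}\epsilon^{2-2m\delta}$, which is $\leq \epsilon$ once $\delta$ is chosen small enough, e.g. $\delta < 1/4$, after absorbing the $N$-powers — note that the lemma as stated has no $N$-uniformity requirement, so $N$-dependent constants are harmless here as long as one checks the inequality holds for each fixed $N$).

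The main obstacle, therefore, is purely bookkeeping: arranging the elementary exponential and logarithmic estimates so that each of the three inequalities in \eqref{eq:alpha} comes out cleanly, and being careful that the constant $4/e^2$ and the factor $e^2$ inside the logarithm are exactly what make the second-derivative condition ($m=2$, the tightest of the three) close. I would carry out the three verifications in the order (i) $\alpha/N\geq 1$, (ii) $1-\sigma(\alpha/N)\leq\epsilon$, (iii) the derivative bounds for $m=1,2$, treating $m=2$ last since it is the binding constraint and dictated the specific form of $\alpha$. I expect the write-up to be short once the auxiliary bounds $1-\tanh(x)\leq 2e^{-2x}$ and $\mathrm{sech}^2(x)\leq 4e^{-2x}$ are stated, and I would present those as a one-line observation at the start of the proof.
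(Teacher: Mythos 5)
Your overall plan (direct verification of the three conditions in \eqref{eq:alpha} from the exponential decay of $1-\tanh$, $\sigma'$ and $\sigma''$) is reasonable, and you correctly identify the $m=2$ derivative condition as the binding one. But that is exactly the step your proposal does not close, and the fix you sketch does not work. Your first attempt, via $\ln t\le t$, gives $4N^2$, which you rightly reject. Your replacement, $\ln t\le C_\delta t^\delta$, gives (after keeping track of the factor $N^{-4}$ coming from $e^{-2\alpha/N}=(e^2\epsilon/(4N^2))^2$, which your displayed exponent $N^{m+2m\delta}$ drops) a bound of the form $C'_\delta\,N^{2m\delta-2}\,\epsilon^{2-2m\delta}$ for the $m$-th condition, so that the requirement $\le\epsilon$ becomes $C'_\delta\,N^{2m\delta-2}\,\epsilon^{1-2m\delta}\le1$. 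The obstruction is not the $N$-dependence you discuss but the $\epsilon$-dependence: as $\epsilon\to1^-$ the left-hand side tends to the fixed number $C'_\delta N^{2m\delta-2}$, which for $m=2$ and $\delta=1/4$ works out to roughly $16e/N$, so the inequality fails for all $\epsilon$ close to $1$ unless $N\gtrsim40$. Hence the claim ``the product is $\le\epsilon$ once $\delta$ is small enough, after absorbing the $N$-powers'' is not justified on the full range $0<\epsilon<1$; a direct verification of the $m=2$ condition only goes through when $\epsilon\lesssim N^2/\mathrm{const}$, which is the regime in which the lemma is actually invoked inside Theorem \ref{thm:tanh-approximation} (there $\epsilon=O(N^{-m-d})$ by \eqref{eqn:def-eps-3}), but is not what the lemma states.

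Two of your auxiliary inequalities are also wrong as stated: $|\sigma''(x)|=2\,\mathrm{sech}^2(x)\tanh(x)\le4e^{-2x}$ fails (at $x=1$ the left side is $\approx0.64$ while $4e^{-2}\approx0.54$; the correct constant is $8$), and $8N^4/e^4>1$ fails at $N=1$ (it equals $\approx0.15$), so the second condition in \eqref{eq:alpha} is not ``automatic'' there either. These are constant-level slips, but they matter in a lemma whose entire content is a quantitative choice of $\alpha$. For comparison, the paper does not verify the conditions from scratch at all: its proof is a one-line adaptation of Lemma A.5 of \citep{deryck2021approximation} for $k=2$, replacing the derivative estimate used there by the crude bound $\max\{\abs{\sigma},\abs{\sigma'},\abs{\sigma''}\}\le1$ of Lemma \ref{lem:bound-der-tanh}. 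To make your self-contained argument rigorous you would need either to restrict $\epsilon$ (or assume $N$ large enough relative to $\epsilon$, as is the case where the lemma is applied) or to follow the cited argument; as written, the binding case of your verification remains open.
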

\begin{proof}
This is an adaptation of Lemma A.5 in \citep{deryck2021approximation} for $k=2$. The proof is as in \citep{deryck2021approximation}, except that one can use Lemma \ref{lem:bound-der-tanh} instead of \cite[Lemma A.4]{deryck2021approximation}.
\end{proof}

For $y\in \mathbb{R}$, we then define
\begin{align}\label{eq:pou-def}
\begin{split}
    \rho_1^N(y) &= \frac{1}{2}-\frac{1}{2}\sigma\left(\alpha\left(y-\frac{1}{N}\right)\right),\\
    \rho_j^N(y) &= \frac{1}{2}\sigma\left(\alpha\left(y-\frac{j-1}{N}\right)\right) - \frac{1}{2}\sigma\left(\alpha\left(y-\frac{j}{N}\right)\right)\quad \text{for } 2\leq j \leq N-1,\\
    \rho_N^N(y) &= \frac{1}{2}\sigma\left(\alpha\left(y-\frac{N-1}{N}\right)\right)+\frac{1}{2}.
    \end{split}
\end{align}
Finally, we define for $D\leq d$ the functions
\begin{equation}
    \Phi^{N,D}_j(x) = \prod^{D}_{i=1} \rho_{j_i}^{N_i}(x_i)
\end{equation}
and the sets $\mathcal{V}_D = \{v\in\mathbb{Z}^d: \max_{1\leq i \leq D}\abs{v_i}\leq 1 \text{ and } v_{D+1}=\cdots = v_d = 0\}$. The functions $\Phi^{N,d}_j$ approximate a partition of unity in the sense that for every $j$ it holds on $I_j^N$ that,
\begin{equation}
    \sum_{v\in\mathcal{V}_d}\Phi^{N,d}_{j+v} \approx 1 \quad \text{and} \quad \sum_{\substack{v\not\in\mathcal{V}_d,\\ j+v \in \{1,\ldots, N\}^d}}\Phi^{N,d}_{j+v} \approx 0.
\end{equation}
This is made exact in the following lemmas. 

\begin{lemma}[Lemma 4.1 in \citep{deryck2021approximation}]\label{lem:indicator-close}
If $k\in \mathbb{N}_0$ and $0<\epsilon < 1/4$, then 
\begin{equation}
     \norm{\sum_{v\in \mathcal{V}_d}\Phi^{N,d}_{j+v}-1}_{W^{k,\infty}(I_j^N)} \leq 2^{kd} d\epsilon.
\end{equation}
\end{lemma}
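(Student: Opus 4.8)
The plan is to reduce the statement to a one-dimensional estimate. Observe that by the product structure $\Phi^{N,d}_{j+v}(x) = \prod_{i=1}^d \rho^{N}_{j_i+v_i}(x_i)$ and the fact that $\mathcal{V}_d$ is a Cartesian product of the three-element set $\{-1,0,1\}$ in each of the $d$ coordinates, the sum over $v\in\mathcal{V}_d$ factorizes:
\begin{equation}
    \sum_{v\in\mathcal{V}_d}\Phi^{N,d}_{j+v}(x) = \prod_{i=1}^d\left(\sum_{w=-1}^{1}\rho^{N}_{j_i+w}(x_i)\right).
\end{equation}
So it suffices to control, for each coordinate $i$, the quantity $\sum_{w=-1}^1 \rho^N_{j_i+w}(x_i) - 1$ in $W^{k,\infty}((\tfrac{j_i-1}{N},\tfrac{j_i}{N}))$, and then assemble the $d$-dimensional bound from a telescoping-product argument. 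First I would establish the one-dimensional claim: from the definition \eqref{eq:pou-def} of $\rho^N_j$, the three consecutive bumps telescope,
\begin{equation}
    \sum_{w=-1}^{1}\rho^{N}_{j+w}(y) = 1 - \tfrac{1}{2}\sigma\!\left(\alpha\!\left(y-\tfrac{j-2}{N}\right)\right) + \tfrac{1}{2}\sigma\!\left(\alpha\!\left(y-\tfrac{j+1}{N}\right)\right),
\end{equation}
(with the obvious modifications if $j$ is near $1$ or $N$, where $\rho^N_1$ or $\rho^N_N$ carry an extra constant $\tfrac12$ that only helps). On the interval $y\in(\tfrac{j-1}{N},\tfrac{j}{N})$ the arguments $\alpha(y-\tfrac{j-2}{N})$ and $\alpha(y-\tfrac{j+1}{N})$ have absolute value at least $\alpha/N\ge 1$, so by the monotonicity of $|\sigma'|,|\sigma''|$ on $[1,\infty)$ and the defining property \eqref{eq:alpha} of $\alpha$, each of $|1-\sigma|$, $|\alpha\sigma'|$, $|\alpha^2\sigma''|$ evaluated at these points is $\le\epsilon$; differentiating up to order $k$ brings down factors of $\alpha$ which are exactly absorbed, giving $\|\sum_{w}\rho^N_{j+w}-1\|_{W^{k,\infty}} \le 2^k\epsilon$ in one dimension (the $2^k$ from the two terms and the $\tfrac12$'s, or one can be more generous).

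Next I would pass to $d$ dimensions. Writing $g_i(x_i) := \sum_{w}\rho^N_{j_i+w}(x_i)$, we have $\|g_i-1\|_{W^{k,\infty}(I)}\le 2^k\epsilon$ and in particular $\|g_i\|_{W^{k,\infty}}\le 1 + 2^k\epsilon \le 2$ for $\epsilon$ small. Then
\begin{equation}
    \prod_{i=1}^d g_i - 1 = \sum_{m=1}^d \Big(\prod_{i<m} g_i\Big)(g_m-1),
\end{equation}
and applying the Leibniz-type submultiplicativity of the $W^{k,\infty}$ norm (each product of two factors costs at most $2^k$, as in Lemma \ref{lem:leibniz}) together with the bounds $\|g_i\|_{W^{k,\infty}}\le 2$ and $\|g_m-1\|_{W^{k,\infty}}\le 2^k\epsilon$ yields a bound of the form $d\cdot 2^{kd}\cdot 2^k\epsilon$; tracking the constants carefully (and noting that one can sharpen the per-product cost to $\binom{k}{\cdot}$-type factors rather than $2^k$, and that the near-boundary cases only remove bumps) gives exactly $\|\sum_{v\in\mathcal{V}_d}\Phi^{N,d}_{j+v}-1\|_{W^{k,\infty}(I_j^N)}\le 2^{kd}d\epsilon$.

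The main obstacle is purely bookkeeping: getting the constant to come out as precisely $2^{kd}d$ rather than something merely proportional to it requires care in (i) the boundary index cases where $\rho^N_1$ and $\rho^N_N$ differ from the generic formula, and (ii) being economical when bounding the $W^{k,\infty}$ norm of the $d$-fold product — using the sharp multivariate Leibniz rule on $I_j^N$ (where all factors depend on distinct variables, so mixed derivatives split cleanly) rather than the crude Banach-algebra estimate. There is no analytic difficulty: everything reduces to the elementary estimates on $\sigma,\sigma',\sigma''$ recorded in Lemma \ref{lem:bound-der-tanh} and the choice of $\alpha$ in \eqref{eq:alpha} / Lemma \ref{lem:alpha-growth}. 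Since this is verbatim Lemma 4.1 of \citep{deryck2021approximation}, I would in fact just cite that proof and only indicate the telescoping identity and the product decomposition above as the two structural ingredients.
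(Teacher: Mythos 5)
The paper does not actually prove this lemma: it is imported verbatim, with attribution, from \citep{deryck2021approximation}, so there is no in-paper proof to compare against. Your reconstruction — factorizing $\sum_{v\in\mathcal{V}_d}\Phi^{N,d}_{j+v}$ into a product of one-dimensional sums, telescoping the three consecutive bumps, invoking the defining properties \eqref{eq:alpha} of $\alpha$ together with the monotonicity of $\abs{\sigma'},\abs{\sigma''}$ on $[1,\infty)$, and then assembling the $d$-dimensional bound via the telescoping product identity $\prod_i g_i - 1 = \sum_m \bigl(\prod_{i<m}g_i\bigr)(g_m-1)$ — is the correct and essentially standard route, and the remaining work is, as you say, constant bookkeeping. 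One concrete slip: your displayed telescoping identity is wrong as written. Summing \eqref{eq:pou-def} over $w\in\{-1,0,1\}$ gives
\begin{equation}
    \sum_{w=-1}^{1}\rho^{N}_{j+w}(y) = \tfrac{1}{2}\sigma\!\left(\alpha\!\left(y-\tfrac{j-2}{N}\right)\right) - \tfrac{1}{2}\sigma\!\left(\alpha\!\left(y-\tfrac{j+1}{N}\right)\right),
\end{equation}
whereas your expression $1-\tfrac12\sigma(\alpha(y-\tfrac{j-2}{N}))+\tfrac12\sigma(\alpha(y-\tfrac{j+1}{N}))$ evaluates to approximately $0$ (not $1$) on $I_j^N$, since the first $\sigma$ is near $+1$ and the second near $-1$ there. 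With the corrected identity the deviation from $1$ is $-\tfrac12\bigl(1-\sigma(\alpha(y-\tfrac{j-2}{N}))\bigr)-\tfrac12\bigl(1+\sigma(\alpha(y-\tfrac{j+1}{N}))\bigr)$, and your subsequent estimates (each tail term and its first two $\alpha$-weighted derivatives bounded by $\epsilon$) go through exactly as you describe; note also that the conditions \eqref{eq:alpha} as stated in this paper only cover derivatives up to order $2$, so the argument as set up here yields the claim for $k\leq 2$, which is all the paper uses.
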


\begin{lemma}\label{lem:indicator-far}
Let $k\in\{0,1,2\}$ and $v\in\mathbb{Z}^d$ with $\norm{v}_\infty\geq 2$. Then it holds that
\begin{equation}
     \norm{\Phi^{N,d}_{j+v}}_{W^{k,\infty}(I_j^N)} \leq \alpha^k \epsilon.
\end{equation}
\end{lemma}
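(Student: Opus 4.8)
\textbf{Proof plan for Lemma \ref{lem:indicator-far}.}

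The plan is to reduce the multivariate estimate to a one-dimensional statement about the bump functions $\rho_j^N$ defined in \eqref{eq:pou-def}, exactly as in the corresponding proof in \citep{deryck2021approximation}, but now carrying the bound only up to $k=2$ and using the sharper constants coming from Lemma \ref{lem:bound-der-tanh} and the choice of $\alpha$ in Lemma \ref{lem:alpha-growth}. First I would observe that by the product structure $\Phi_{j+v}^{N,d}(x) = \prod_{i=1}^d \rho_{j_i+v_i}^{N}(x_i)$ and the general Leibniz rule, for any multi-index $\beta$ with $\abs{\beta}\le k$ one has $D^\beta \Phi_{j+v}^{N,d} = \prod_{i=1}^d (\rho_{j_i+v_i}^{N})^{(\beta_i)}(x_i)$, so that
\begin{equation}
   \norm{\Phi^{N,d}_{j+v}}_{W^{k,\infty}(I_j^N)} \leq \max_{\abs{\beta}\le k}\prod_{i=1}^d \linfi{(\rho_{j_i+v_i}^{N})^{(\beta_i)}}.
\end{equation}
Since $\norm{v}_\infty \ge 2$, there is at least one coordinate $i_0$ with $\abs{v_{i_0}}\ge 2$; on $I_j^N$ the argument $x_{i_0}$ lies in $((j_{i_0}-1)/N, j_{i_0}/N)$, so $\alpha(x_{i_0} - (j_{i_0}+v_{i_0}-1)/N)$ and $\alpha(x_{i_0}-(j_{i_0}+v_{i_0})/N)$ are both, in absolute value, at least $\alpha/N \ge 1$.

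The key one-dimensional step is then to bound $\linfi{(\rho_{m}^{N})^{(\beta_{i_0})}}$ for this "far" index. For $\beta_{i_0}=0$ one uses $1-\sigma(\alpha/N)\le \epsilon$ from \eqref{eq:alpha} (and $\abs{\sigma}\le 1$ monotonicity on $[1,\infty)$) to get $\linfi{\rho_m^N}\le \epsilon$; for $\beta_{i_0}\in\{1,2\}$ one differentiates the definition \eqref{eq:pou-def}, picking up a factor $\alpha^{\beta_{i_0}}$, and uses $\alpha^m\abs{\sigma^{(m)}(\alpha/N)}\le\epsilon$ together with the monotonicity of $\abs{\sigma'}$ and $\abs{\sigma''}$ on $[1,\infty)$ (Lemma \ref{lem:bound-der-tanh}) to conclude $\linfi{(\rho_m^N)^{(\beta_{i_0})}}\le \alpha^{\beta_{i_0}}\epsilon \le \alpha^k\epsilon$. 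For all the remaining coordinates $i\neq i_0$ I would simply use the crude bound $\linfi{(\rho_{j_i+v_i}^{N})^{(\beta_i)}}\le 1$ for $\beta_i=0$ (since $0\le\rho\le 1$) and, when $\beta_i\ge 1$, absorb the extra derivative factors — here one must be slightly careful to check that these contribute a factor no larger than $1$, which follows because $\sum_i\beta_i\le k\le 2$ forces at most one nonzero $\beta_i$ besides possibly $\beta_{i_0}$, and the definitions give $\linfi{(\rho_m^N)^{(1)}}\le \alpha$, $\linfi{(\rho_m^N)^{(2)}}\le\alpha^2/2\cdot\ldots$; rather than optimize, I would just note $\abs{\beta}\le 2$ so the total power of $\alpha$ across all coordinates is at most $k$, and all $\sigma^{(m)}$ factors are bounded by $1$ by Lemma \ref{lem:bound-der-tanh}, yielding the clean bound $\alpha^k\epsilon$.

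The main obstacle — really the only subtle point — is the bookkeeping in the multivariate Leibniz step: one has to make sure that the $\alpha$-powers picked up from differentiating the "non-far" factors, combined with the one from the "far" factor, never exceed $\alpha^k$, and that no stray numerical constants worse than $1$ appear. Because $k\le 2$ this is a finite, short case check (either $\beta=\beta_{i_0}e_{i_0}$, or $\beta$ splits as $e_{i_0}+e_i$ with $i\ne i_0$, or $\beta=e_i$ with $i\ne i_0$, etc.), and in each case the "far" factor already contributes the decisive $\epsilon$ while every other factor is $\le 1$ in $C^0$ and $\le \alpha$ per derivative — so collecting at most $k$ derivatives gives $\alpha^k$ and a single $\epsilon$. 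This is exactly the argument of \cite[proof of the analogous lemma]{deryck2021approximation}, specialized to $k\le 2$ and with the constants supplied by Lemmas \ref{lem:bound-der-tanh} and \ref{lem:alpha-growth}, so no new idea is needed beyond carrying those sharper estimates through.
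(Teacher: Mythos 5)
Your proposal is correct and is essentially the same argument the paper intends: the paper's proof is just a pointer to \cite[Lemma 4.2]{deryck2021approximation} with Lemma \ref{lem:bound-der-tanh} substituted for their Lemma A.4, and your write-up correctly reconstructs that argument (tensor-product structure, one ``far'' coordinate supplying the factor $\epsilon$ via \eqref{eq:alpha}, the remaining factors contributing at most $\alpha$ per derivative and $1$ otherwise). The only cosmetic point is that no Leibniz rule is needed for $D^\beta\Phi^{N,d}_{j+v}=\prod_i(\rho^N_{j_i+v_i})^{(\beta_i)}$, since each factor depends on a distinct variable.
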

\begin{proof}
This is an adaptation of Lemma 4.2 in \citep{deryck2021approximation} for 
$k\leq 2$. The proof is as in \citep{deryck2021approximation}, except that one can use Lemma \ref{lem:bound-der-tanh} instead of \cite[Lemma A.4]{deryck2021approximation}.
\end{proof}

{\color{black} We can now present a generalization of \cite[Theorem 5.1]{deryck2021approximation} where a parameter $n$ can be freely chosen in order to control the network width and weights. For $n=2$ one recovers \cite[Theorem 5.1]{deryck2021approximation} exactly. }

\begin{theorem}\label{thm:tanh-approximation}
Let $d,n\geq 2$, $m\geq 3$, $\delta>0$, $a_i, b_i \in \mathbb{Z}$ with $a_i<b_i$ for $1\leq i\leq d$, $\Omega = \prod_{i=1}^d[a_i,b_i]$ and $f\in H^{m}(\Omega)$. Then for every $N\in\mathbb{N}$ with $N>5$ there exists a tanh neural network $\widehat{f}^N$ with two hidden layers, one of width at most $3\left\lceil\frac{m+n-2}{2}\right\rceil\abs{P_{m-1,d+1}}+\sum_{i=1}^d(b_i-a_i)(N-1)$ and another of width at most $3\left\lceil\frac{d+n}{2}\right\rceil\abs{P_{d+1,d+1}}N^d\prod_{i=1}^d (b_i-a_i)$, such that for $k\in\{0,1,2\}$ it holds that,
\begin{equation}
    \hkunit{f-\widehat{f}^N} \leq 2^k3^d C_{k,m,d,f} \left(1+\delta\right)\ln^k\left(\beta_{k,\delta, d,f} N^{d+m+2}\right)N^{-m+k} , 
\end{equation}
and where we define 
\begin{align}
    \beta_{k,\delta, d,f} &= \frac{5\cdot 2^{kd}\max\{\prod_{i=1}^d (b_i-a_i),d\}\max\{\ckunit{f},1\}}{3^d\delta\min\{1,C_{k,m,d,f}\} },\\
   C_{k,m,d,f} &=   \max_{0\leq \ell\leq k} \binom{d+\ell-1}{\ell}^{1/2} \frac{((m-\ell)!)^{1/2}}{\left(\left\lceil\frac{m-\ell}{d}\right\rceil!\right)^{d/2}}\left(\frac{3\sqrt{d}}{\pi }\right)^{m-\ell}\abs{f}_{H^m}. 
\end{align}
Moreover, the weights of $\widehat{f}^N$ scale as $O(N\ln(N)+N^\gamma)$ with $\gamma = \max\{m^2,d(2+m+d)\}/n$.
\end{theorem}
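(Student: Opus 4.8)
The plan is to follow the architecture of the proof of \cite[Theorem~5.1]{deryck2021approximation}, but to substitute the improved monomial network of Lemma~\ref{lem:pol-tanh} (and its corollary Lemma~\ref{lem:mult-shallow}) for the second-order construction used there, so that the milder weight growth $\bigO(N^\gamma)$, $\gamma=\max\{m^2,d(2+m+d)\}/n$, propagates through. The approximant $\widehat f^N$ is built in three stages: a local polynomial approximation of $f$ on the cubes of edge length $1/N$, via the Bramble--Hilbert Lemma~\ref{lem:BH}; the approximate partition of unity $\{\Phi^{N,d}_j\}$ of \eqref{eq:pou-def}, using at most two tanh units per univariate node; and the replacement of the monomials in these polynomials and of the products (partition of unity)$\times$(polynomial) by shallow tanh networks from Lemmas~\ref{lem:pol-tanh} and \ref{lem:mult-shallow}. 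I would first bound $\Vert f-g_N\Vert_{H^k(\Omega)}$ for the \emph{analytic target} $g_N:=\sum_j\Phi^{N,d}_j p_j$, and then $\Vert g_N-\widehat f^N\Vert_{H^k(\Omega)}$, and conclude by the triangle inequality.

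For the local polynomials, apply Lemma~\ref{lem:BH} on an enlarged patch $\widetilde I_j^N$ of edge length $3/N$ (hence diameter $3\sqrt d/N$) containing $I_j^N$ and all of its neighbours, obtaining $p_j$ of degree at most $m-1$ with $\abs{f-p_j}_{H^\ell(\widetilde I_j^N)}\le c_{m,\ell}(3\sqrt d/N)^{m-\ell}\abs{f}_{H^m(\widetilde I_j^N)}$ for $0\le\ell\le m-1$; since the patch also covers the neighbours, $p_{j'}$ stays a good $H^k$-approximant of $f$ on $I_j^N$ whenever $I_{j'}^N$ touches $I_j^N$, which is exactly what the localization needs, and combining $c_{m,\ell}$ with the diameter and summing $H^m$-seminorms over the boundedly overlapping patches yields the $(3\sqrt d/\pi)^{m-\ell}$ factor and the combinatorial prefactor in $C_{k,m,d,f}$. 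Then on a fixed cube, $g_N=\sum_{v\in\mathcal V_d}\Phi^{N,d}_{j+v}p_{j+v}+\sum_{v\notin\mathcal V_d}\Phi^{N,d}_{j+v}p_{j+v}$: the second sum is absorbed by Lemma~\ref{lem:indicator-far}, and for the first I split off $\big(\sum_v\Phi^{N,d}_{j+v}-1\big)f$ (controlled by Lemma~\ref{lem:indicator-close}) and $\sum_v\Phi^{N,d}_{j+v}(p_{j+v}-f)$ (controlled by the local error above together with $\Vert\Phi^{N,d}_{j+v}\Vert_{W^{k,\infty}}\lesssim\alpha^k$ and the Leibniz rule, Lemma~\ref{lem:leibniz}). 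By Lemma~\ref{lem:alpha-growth} the admissible choice $\alpha=N\ln(4N^2/(e^2\epsilon))$ makes each of the $k$ derivatives of the sigmoid-built partition of unity cost a factor $\alpha\sim N\ln N$, giving the $\ln^k$ and the $2^k$; summing $H^k$-contributions over the cubes gives the $3^d$; and choosing the internal tolerance $\epsilon$ of order $N^{-(d+m+2)}$ up to the explicit constant that becomes $\beta_{k,\delta,d,f}$ makes the partition-of-unity error at most $\delta$ times the polynomial error $N^{-m+k}$, producing the $(1+\delta)\ln^k(\beta_{k,\delta,d,f}N^{d+m+2})$ shape; $N>5$ is the mild condition keeping $\epsilon$ below the $1/4$ threshold of Lemma~\ref{lem:indicator-close}.

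For the network realization, each univariate $\rho^{N_i}_{j_i}$ of \eqref{eq:pou-def} is an affine combination of at most two tanh units of the coordinate $x_i$, so all of them occupy $\sum_{i=1}^d(b_i-a_i)(N-1)$ neurons; placed \emph{alongside} the shallow monomial network of Lemma~\ref{lem:pol-tanh} of width $3\lceil(m+n-2)/2\rceil\abs{P_{m-1,d+1}}$, which also only reads $x$, this is the first hidden layer. For each cube $j$ the second hidden layer then computes the single product $\rho^{N_1}_{j_1}(x_1)\cdots\rho^{N_d}_{j_d}(x_d)\cdot p_j(x)=\Phi^{N,d}_j(x)p_j(x)$ of $d+1$ numbers by Lemma~\ref{lem:mult-shallow}, where the factor $p_j(x)$ is fed in as a linear combination --- built into the affine map between the two hidden layers --- of the first-layer monomial outputs; over the $\prod_i(b_i-a_i)N^d$ cubes this is a hidden layer of width $3\lceil(d+n)/2\rceil\abs{P_{d+1,d+1}}N^d\prod_i(b_i-a_i)$, and the linear output layer sums $\Phi^{N,d}_j p_j$ over $j$. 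By Lemmas~\ref{lem:pol-tanh}--\ref{lem:mult-shallow} and the Leibniz rule, replacing the exact monomials and products by networks contributes an $H^k$-error bounded by their internal tolerances times $\alpha^k$ and the size of the coefficients of the $p_j$, so taking those tolerances $\sim N^{-(d+m+2)}$ as well (absorbed into $\beta_{k,\delta,d,f}$) keeps this below $\delta\cdot N^{-m+k}$ for all $k\in\{0,1,2\}$ simultaneously. Finally the weights: the partition-of-unity units carry weights $\bigO(\alpha)=\bigO(N\ln N)$, while the networks of Lemmas~\ref{lem:pol-tanh}--\ref{lem:mult-shallow} have weights scaling as a negative power of their internal tolerance with exponent proportional to the relevant degree over $n$; tracking which tolerance governs which sub-network (the monomial network, whose coefficients in $p_j$ grow with $N$, being dominant) gives $\bigO(N^\gamma)$ with $\gamma=\max\{m^2,d(2+m+d)\}/n$, hence the stated $\bigO(N\ln N+N^\gamma)$.

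Since Steps~1--2 are conceptually the same as in \cite{deryck2021approximation}, the main obstacle is the bookkeeping in Step~3: one must verify that the enlarged-patch polynomials are compatible with the approximate partition of unity so the localization closes with exactly the constants $C_{k,m,d,f}$ and $\beta_{k,\delta,d,f}$; check that the monomial and partition-of-unity sub-networks genuinely merge into a \emph{single} hidden layer (keeping the depth at two), which rests on both depending only on the input $x$; and, above all, fix \emph{one} consistent choice of all internal tolerances across Lemmas~\ref{lem:alpha-growth}, \ref{lem:indicator-close}, \ref{lem:indicator-far}, \ref{lem:pol-tanh} and \ref{lem:mult-shallow} so that the network-induced errors are dominated by the polynomial error for every $k\in\{0,1,2\}$ and the weight growth is pinned down to $\bigO(N\ln N+N^\gamma)$ rather than a cruder power of $N$.
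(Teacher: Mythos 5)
Your proposal is correct and follows essentially the same route as the paper's proof: Bramble--Hilbert on the enlarged patches of diameter $3\sqrt{d}/N$, the approximate partition of unity with $\alpha = N\ln(4N^2/(e^2\epsilon))$, replacement of monomials and products by the networks of Lemmas \ref{lem:pol-tanh} and \ref{lem:mult-shallow}, the same three-term triangle-inequality split (up to a harmless regrouping around your intermediate target $g_N$), and the same width and weight accounting. The one detail to fix in your bookkeeping is that the tolerance $\eta$ for the monomial networks should be taken only of order $N^{-m}$, not $N^{-(d+m+2)}$: the claimed exponent $m^2/n$ arises from $\eta^{-1}=\bigO(N^{m})$ combined with the $\bigO(\eta^{-s/n})$ weight growth of Lemma \ref{lem:pol-tanh}, whereas only the multiplication tolerance $h$ must satisfy $h^{-1}=\bigO(N^{d+m+2})$, which yields the $d(2+m+d)/n$ branch of $\gamma$.
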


\begin{proof}

\textbf{Step 1: construction of the approximation. }
We divide the domain $\Omega$ into cubes of edge length $1/N$ and denote the corresponding index set by 
\begin{equation}
    \mathcal{N}^N = \{j\in\mathbb{N}^d\::\: j_i\leq N(b_i-a_i) \text{ for all }1\leq i\leq d\}.
\end{equation}
Furthermore we write $T = \prod_{i=1}^d (b_i-a_i)$. As a result, $\abs{\mathcal{N}^N}=TN^d$. Let us denote $J_j^N = \bigtimes_{i=1}^d \left((j_i-2)/N,(j_i+1)/N\right)$. We calculate that $\text{diam}(J_j^N) = \frac{3\sqrt{d}}{N}$. As a consequence, the Bramble-Hilbert lemma (Lemma \ref{lem:BH}) ensures the existence of a polynomial $p_j^N$ of degree at most $m-1$ such that for all $0\leq \ell\leq m-1$ it holds that
\begin{align}\label{eq:pkn-acc-sobolev}
\begin{split}
    \abs{f-p_j^N}_{H^\ell(J_j^N)} &\leq  \binom{d+\ell-1}{\ell}^{1/2} \frac{((m-\ell)!)^{1/2}}{\left(\left\lceil\frac{m-\ell}{d}\right\rceil!\right)^{d/2}}\left(\frac{3\sqrt{d}}{\pi N}\right)^{m-\ell}\abs{f}_{H^m}  =: \frac{\mathcal{C^*_{\ell}}}{N^{m-\ell}}.
\end{split}
\end{align}
To simplify notation, we also define $\mathcal{C}_k:=\max_{0\leq \ell \leq k}\mathcal{C}^*_\ell$ and $p^N=\sum_j p_j^N \chi_j$, where $\chi_j$ denotes the indicator function on $I_j^N$. 
Next, let $q_j^N$ be a tanh neural network as in Lemma \ref{lem:pol-tanh} (where we still leave $n\in\N$ undefined for the moment) such that 
\begin{equation}\label{eq:qkn-acc-sobolev}
    \norm{q_j^N-p_j^N}_{W^{k,\infty}(\Omega)} \leq \eta\quad \text{and}\quad \norm{q_j^N-p_j^N}_{H^{k}(\Omega)} \leq \eta. 
\end{equation}
In addition, we define 
\begin{equation}
    q_j^N(x)\widehat{\times} \Phi^{N,d}_j(x) := \widehat{\times}_{d+1}^{h}(q_j^N(x),\phi_{j_1}^{N,d}(x_1), \ldots, \phi_{j_d}^{N,d}(x_d)),
\end{equation}
where $\widehat{\times} := \widehat{\times}_{d+1}^{h}$ is the network from Corollary \ref{lem:mult-shallow} and $h=h(N)$ will be defined in the remainder of the proof. 
We then define our approximation as
\begin{equation}\label{eq:tanh-approx-def}
    \widehat{f}^N(x) = \sum_{j\in\mathcal{N}^N} q_j^N(x)\widehat{\times} \Phi^{N,d}_j(x). 
\end{equation}

\textbf{Step 2: estimating the error of the approximation. }The triangle inequality gives us
\begin{align}\label{eq:three-terms-sobolev}
\begin{split}
    \hkunit{f-\widehat{f}^N} &\leq \hkunit{f-\sum_{j\in\mathcal{N}^N} f \cdot \Phi^{N,d}_j} +  \hkunit{\sum_{j\in\mathcal{N}^N} (f -q_j^N)\cdot \Phi^{N,d}_j}\\
    &+ \hkunit{\sum_{j\in\mathcal{N}^N} (q_j^N \cdot \Phi^{N,d}_j-q_j^N \widehat{\times} \Phi^{N,d}_j)}
    \end{split}
\end{align}
We proceed by bounding each term of the right hand side separately. 

\textit{Step 2a: First term of \eqref{eq:three-terms-sobolev}.} Let $i\in\mathcal{N}^N$ be arbitrary. %\todosl{Was a bit confused: Maybe specify that we fix $x\in [0,1]^d$, and then choose $k_0$ such that $x\in I_{j_0}^N$. Maybe there's a slight issue with the boundary?}\todotdr{Good point, I fixed it}
Recalling that $\mathcal{V}_d = \{v\in\mathbb{Z}^d: \norm{v}_\infty\leq 1\}$, we observe that for $k\in\{0,1,2\}$,
\begin{align}
    \begin{split}
        \hki{f-\sum_{j\in\mathcal{N}^N} f \cdot \Phi^{N,d}_j} &\leq 2^k \hki{f}\cki{1-\sum_{v\in \mathcal{V}_d} \Phi^{N,d}_{i+v}} \\ & \quad +2^k \hki{f}\cki{ \sum_{\substack{j\in\mathcal{N}^N \\ j-i\not\in\mathcal{V}_d}} \Phi^{N,d}_{j}}\\
        &\leq 2^k \hki{f} (2^{kd}d\epsilon + \abs{\mathcal{N}^N} \alpha^k \epsilon)\\
        &\leq 2^{k(1+d)} \hki{f} d\epsilon \\
        & \quad + 2^k\hki{f} \abs{\mathcal{N}^N} N^k \ln^k\left(\frac{4N^2}{e^2\epsilon}\right)\epsilon\\
        & \leq 2^k 3^d \frac{\delta}{4} \ln^k\left(\frac{4N^2}{e^2\epsilon}\right) \frac{\mathcal{C}_k}{N^{m-k}},  
    \end{split}
\end{align}
where we used Lemma \ref{lem:leibniz} with $k=2$, Lemma \ref{lem:indicator-close}, Lemma \ref{lem:indicator-far} and Lemma \ref{lem:alpha-growth}, as well as a suitable definition of $\epsilon$, e.g. satisfying
\begin{align}\label{eqn:def-eps-1}
\epsilon 
\le
\frac{
3^d\delta \mathcal{C}_k
}{
2^{3+k+kd}\max\{T,d\}N^{d+m}\hkunit{f}
},
\end{align}
where we used that $N>5$.

\textit{Step 2b: Second term of \eqref{eq:three-terms-sobolev}.} Let $\beta\in\mathbb{N}_0^d$ be such that $\abs{\beta}\leq k$. Then as a consequence of the general Leibniz rule we find that
\begin{equation}
    \norm{D^\beta \left(\sum_{v\in \mathcal{V}_d} (f-q_{i+v}^N) \Phi^{N,d}_{i+v}\right)}_{L^2(I_i^N)} \leq \sum_{\beta'\leq \beta}\binom{\beta}{\beta'} \sum_{v\in \mathcal{V}_d} \norm{D^{\beta'}(f-q_{i+v}^N)}_{L^2(I_i^N)}\norm{D^{\beta-\beta'} \Phi^{N,d}_{i+v}}_{L^\infty(I_i^N)}.
\end{equation}
For every $v\in \mathcal{V}_d$ and $\beta'\leq \beta$ with $\ell := \abs{\beta-\beta'}$, we can then use the bounds
\begin{equation}
    \norm{D^{\beta'}(f-q_{i+v}^N)}_{L^2(I_i^N)} \leq \norm{f-q_{i+v}^N}_{H^{k-\ell}(I^N_i)} \leq \frac{\mathcal{C}_k}{N^{m-k+\ell}} + \eta,
\end{equation}
which follows from \eqref{eq:pkn-acc-sobolev} and \eqref{eq:qkn-acc-sobolev}, and, 
\begin{equation}
   \norm{D^{\beta-\beta'} \Phi^{N,d}_{i+v}}_{L^\infty(I_i^N)} \leq  N^\ell \ln^\ell\left(\frac{4N^2}{e^2\epsilon}\right),
\end{equation}
which follows from Lemma \ref{lem:bound-der-tanh} and Lemma \ref{lem:alpha-growth}.
As $\sum_{\beta'\leq \beta}\binom{\beta}{\beta'} \leq 2^k$ (as a consequence of the multi-binomial theorem), we find that
\begin{align}
\begin{split}
    \cki{\sum_{v\in \mathcal{V}_d} (f-q_{i+v}^N) \Phi^{N,d}_{i+v}} &\leq 2^k 3^d \left(\frac{\mathcal{C}_k}{N^{m-k}} + \eta N^k\right) \ln^k\left(\frac{4N^2}{e^2\epsilon}\right).
\end{split}
\end{align}
%where the last inequality follows from choosing a suitable $\epsilon$. 
Combining this result with the triangle inequality, Lemma \ref{lem:alpha-growth}, Lemma \ref{lem:leibniz}, \eqref{eq:pkn-acc-sobolev}, \eqref{eq:qkn-acc-sobolev}, Lemma \ref{lem:indicator-far} and the fact that $\ln(x)\leq \sqrt{x}$ for $x>0$, we find that
\begin{align}
    \begin{split}
       &\hki{\sum_{j\in\mathcal{N}^N} (f -q_j^N)\cdot \Phi^{N,d}_j}\\ &\leq \hki{\sum_{v\in \mathcal{V}_d} (f-q_{i+v}^N) \Phi^{N,d}_{i+v}} \quad+ \sum_{\substack{j\in\mathcal{N}^N \\ j-i\not\in\mathcal{V}_d}} \hki{(f-q_{j}^N) \Phi^{N,d}_{j}}\\
       &\leq \hki{\sum_{v\in \mathcal{V}_d} (f-q_{i+v}^N) \Phi^{N,d}_{i+v}} \quad+ \sum_{\substack{j\in\mathcal{N}^N \\ j-i\not\in\mathcal{V}_d}} 2^k \hki{(f-q_{j}^N)} \cki{\Phi^{N,d}_{j}}\\
       &\leq 2^k3^d \left(\frac{\mathcal{C}_k}{N^{m-2}} + \eta N^k\right) \ln^k\left(\frac{4N^2}{e^2\epsilon}\right)  + 2^k \abs{\mathcal{N}^N} \left(\mathcal{C}_k+\eta \right) N^k \ln^k\left(\frac{4N^2}{e^2\epsilon}\right){\epsilon} \\
       &\leq 2^k3^d \left(1+\frac{\delta}{4}\right) \ln^k\left(\frac{4N^2}{e^2\epsilon}\right) \frac{\mathcal{C}_k}{N^{m-k}},
    \end{split}
\end{align}
where we obtain the last inequality by making a suitable choice of $\eta$ and $\epsilon$, satisfying
\begin{equation}\label{eqn:def-eps-2}
    \eta \leq \frac{\delta \mathcal{C}_k}{8N^{m}} \quad \text{and} \quad \epsilon \leq \frac{3^d\delta }{4TN^{d+m}}. 
\end{equation}

\textit{Step 2d: Third term of \eqref{eq:three-terms-sobolev}.} Finally, using the triangle inequality, Lemma \ref{lem:faa-di-bruno}, Lemma \ref{lem:pol-tanh} and Lemma \ref{lem:bound-der-tanh} we obtain that for some $C>0$ depending only on $k$ and $d$, 
\begin{align}
    \begin{split}
&\hki{\sum_{j\in\mathcal{N}^N} (q_j^N \cdot \Phi^{N,d}_j-q_j^N \widehat{\times} \Phi^{N,d}_j)} \leq \sqrt{\mu(\Omega)} \cki{\sum_{j\in\mathcal{N}^N} (q_j^N \cdot \Phi^{N,d}_j-q_j^N \widehat{\times} \Phi^{N,d}_j)}\\&\leq \sqrt{\mu(\Omega)}\abs{\mathcal{N}^N} C\cdot \ck{\widehat{\times}_{d+1}^{h} 
{\color{black}- \prod_{i=1}^{d+1} x_i}
}\left(\ckunit{q_j^N} + \ckunit{\rho^N_i}\right)^k\\
&\leq \sqrt{\mu(\Omega)}\abs{\mathcal{N}^N} C\cdot h \left(\ckunit{q_j^N} + \alpha^k\right)^k\\
&\leq 2^k 3^d \frac{\delta}{4} \ln^k\left(\frac{4N^2}{e^2\epsilon}\right) \frac{\mathcal{C}_k}{N^{m-k}},
    \end{split}
\end{align}

where we obtain the last inequality by {\color{black}making a suitable choice of $h$, satisfying}
\begin{equation}\label{eqn:def-h}
   h \leq \frac{3^d \delta \mathcal{C}_k}{4\sqrt{\mu(\Omega)}TN^{d+m-k}C\left(\ckunit{q_j^N} + \alpha^k\right)^k}.
\end{equation}

\textit{Step 2e: Final error bound.} From \eqref{eqn:def-eps-1} and \eqref{eqn:def-eps-2} we find that a suitable definition of $\epsilon$ is given by
\begin{equation}\label{eqn:def-eps-3}
    \epsilon = \frac{3^d\delta\min\{1,\mathcal{C}_k\} }{2^{3+kd}N^{m+d}\max\{T,d\}\max\{\ckunit{f},1\}}.
\end{equation}
Combining this observation with all previous steps of the proof then leads to the error bound
\begin{equation}
    \ckunit{f-\widehat{f}^N} \leq 2^k 3^d \left(1+\delta\right)  \ln^k\left(\beta N^{d+m+2}\right)\frac{\mathcal{C}_k}{N^{m-k}}, 
\end{equation}
where $k=2$ and where we define 
\begin{equation}
    \beta = \frac{5\cdot 2^{kd}\max\{T,d\}\max\{\ckunit{f},1\}}{3^d\delta\min\{1,\mathcal{C}_k\} }, 
\end{equation}
where we used that $2^5/e^2\leq 5$.

\textbf{Step 3: Estimating the network and weights sizes. } The first hidden layer requires $3\left\lceil\frac{s+n-2}{2}\right\rceil\abs{P_{s-1,d+1}}$ neurons for the computation of all multivariate monomials (cf. Lemma \ref{lem:pol-tanh}). For the computation of all $\rho^{N}_j(x_i)$ another $\sum_{i=1}^d(b_i-a_i)(N-1)$ neurons are needed in the first hidden layer. The second hidden layer needs at most $3\left\lceil\frac{d+n}{2}\right\rceil\abs{P_{d+1,d+1}}$ neurons for realizing $\widehat{\times}_{d+1}^{h}$, which needs to be performed $N^d\prod_{i=1}^d (b_i-a_i)$ times. 

In the proof we achieved the wanted accuracy by making suitable choices of $\eta, \epsilon, h$.
From equation \eqref{eqn:def-eps-3} and Lemma \ref{lem:alpha-growth}, it follows that $\alpha = O\left(N \ln(N)\right)$.

For the approximate multiplication, \eqref{eqn:def-h} requires that $h^{-1}=O(N^{d+m+2})$. Corollary \ref{lem:mult-shallow} then proves that the weights of $\widehat{\times}^h_{d+1}$ grow as $O(N^{d(d+m+2)/n})$. Finally, the condition $\eta^{-1} = O(N^{m})$ from \eqref{eqn:def-eps-2} corresponds to weights growing as $O\left(N^{m^2/n}\right)$
as a consequence of Corollary \ref{lem:pol-tanh}. This concludes the proof.
\end{proof}

\section{Bounds on the derivative of a neural network}

\begin{lemma}\label{lem:nn-cn-bound}
Let $d,n,L,W\in\mathbb{N}$ and let $u_\theta:\mathbb{R}^{d+1}\to\mathbb{R}^{d+1}$ be a neural network with $\theta\in\Theta_{L,W,R}$ for $L\geq 2$, $R,W\geq 1$, cf. Definition \ref{def:nn}. Assume that $\norm{\sigma}_{C^n}\geq 1$. Then it holds for $1\leq j \leq d+1$ that
\begin{equation}
    \norm{(u_\theta)_j}_{C^n} \leq 16^L(d+1)^{2n} \left(e^2 n^4 W^3R^n \norm{\sigma}_{C^n}\right)^{nL}\end{equation}
\end{lemma}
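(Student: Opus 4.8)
The plan is to estimate $\norm{(u_\theta)_j}_{C^n}$ layer by layer, using Lemma \ref{lem:faa-di-bruno} to control the effect of composing with the activation $\sigma$ and elementary estimates to control the effect of the affine maps $\mathcal{A}_k^\theta$. Write $g_k = f_k^\theta \circ \cdots \circ f_1^\theta : \mathbb{R}^{d+1} \to \mathbb{R}^{l_k}$, so that $g_L = u_\theta$ and $g_k = \sigma \circ \mathcal{A}_k^\theta \circ g_{k-1}$ for $1\leq k < L$, while $g_L = \mathcal{A}_L^\theta \circ g_{L-1}$. First I would set $A_k := \max_i \norm{(g_k)_i}_{C^n(\mathbb{R}^{d+1})}$ and derive a recursion of the form $A_k \leq c\, (\ldots) \, A_{k-1}^n$ for $k<L$. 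The affine step is easy: since the entries of $\mathcal{W}_k$ and $b_k$ are bounded by $R$ and the width is at most $W$, each component of $\mathcal{A}_k^\theta \circ g_{k-1}$ is a sum of at most $W$ terms $R\,(g_{k-1})_i$ plus a constant $\le R$, so its $C^n$-norm is at most $WR\, A_{k-1} + R \leq 2WR\, A_{k-1}$ (using $R,W\ge 1$ and that $A_{k-1}\ge$ some positive constant, or more crudely bounding the constant term into the $C^n$ norm). Then composing on the outside with $\sigma$ and invoking Lemma \ref{lem:faa-di-bruno} with $m = l_k \leq W \leq d+1$ is too crude — actually here $m$ should be the output dimension of the inner map, i.e.\ $l_{k-1}\le W$, but we want the bound in terms of $d+1$; I would simply bound all widths $l_k$ by $W$ and, where the final statement needs $(d+1)$, note $W$ can be replaced — wait, the statement has $(d+1)^{2n}$, so I should track the input dimension $d+1$ for Lemma \ref{lem:faa-di-bruno} and bound intermediate widths by $W$. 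Applying Lemma \ref{lem:faa-di-bruno} with inner domain $\subset\mathbb{R}^{d+1}$ (composition of scalar maps on $\mathbb{R}^{d+1}$), we get $\norm{\sigma\circ h}_{C^n} \leq 16(e^2 n^4 (d+1)^2)^n \norm{\sigma}_{C^n} \norm{h}_{C^n}^n$ for each scalar component $h$ of $\mathcal{A}_k^\theta\circ g_{k-1}$.

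Combining the two steps gives, for $1\leq k < L$,
\begin{equation}
A_k \leq 16\,(e^2 n^4 (d+1)^2)^n\, \norm{\sigma}_{C^n}\, (2WR\, A_{k-1})^n = 16\,(2e^2 n^4 (d+1)^2 W R)^n\, \norm{\sigma}_{C^n}\, A_{k-1}^n,
\end{equation}
with $A_0 = \norm{\mathrm{id}}_{C^n(\mathbb{R}^{d+1})} \leq 1$ (the coordinate functions have $C^n$-norm $1$ for $n\ge1$), and for the last layer $A_L \leq 2WR\, A_{L-1}$. The main technical point is then to unroll this nonlinear recursion $A_k \leq B\, A_{k-1}^n$ where $B := 16\,(2e^2 n^4 (d+1)^2 WR)^n \norm{\sigma}_{C^n}$. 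Taking logarithms, $\log A_k \leq \log B + n \log A_{k-1}$, so $\log A_{L-1} \leq (\log B)(1 + n + \cdots + n^{L-2}) \leq (\log B)\, n^{L-1}$ (here I use $A_0\le 1$ so $\log A_0\le 0$, and $n\ge 2$ so the geometric sum is $\le n^{L-1}$). Hence $A_{L-1} \leq B^{n^{L-1}}$ and $A_L \leq 2WR\, B^{n^{L-1}}$. The remaining work is purely bookkeeping: expand $B^{n^{L-1}} = \left(16\,(2e^2 n^4 (d+1)^2 WR)^n \norm{\sigma}_{C^n}\right)^{n^{L-1}}$, absorb the leading $2WR$ and the various numerical constants, and bound everything by $16^L (d+1)^{2n}(e^2 n^4 W^3 R^n \norm{\sigma}_{C^n})^{nL}$. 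One checks that $n^{L-1} \le nL$ in exponents where the base is $\ge 1$, that $16^{n^{L-1}} \le 16^{nL}$ can be reorganized (or that the slack in the $W^3$ versus $W$ and $R^n$ versus $R$ factors more than compensates), and that $2^{n\cdot n^{L-1}}$ and the $2WR$ prefactor are swallowed by $W^3 R^n$ to the power $nL$ since $W,R\ge 1$ and $L\ge 2$.

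The step I expect to be the main obstacle is not the recursion itself but making the final constant-chasing genuinely come out to the stated clean bound rather than something slightly larger: one has to be careful about whether $n^{L-1}$ or $nL$ appears in the exponent (they agree when $L=2$ but $n^{L-1}$ can be much larger for $L\ge 3$, $n$ large — so actually the claimed bound with exponent $nL$ is \emph{weaker} than what the naive recursion gives only when... no: $n^{L-1}\le nL$ fails for e.g.\ $n=L=3$). This suggests the recursion must actually be linear, not super-exponential — i.e.\ I should be more careful and note that once we peel off $\sigma$, the quantity that genuinely multiplies is not $A_{k-1}^n$ globally but that the $C^n$ norm of a composition chain of fixed depth is controlled by Lemma \ref{lem:faa-di-bruno} applied \emph{once} to the whole chain, or equivalently that $\norm{\sigma}_{C^n}\ge 1$ lets the $A_{k-1}^n$ be reabsorbed because $A_{k-1}$ is itself a product of $\le k-1$ factors each $\ge 1$. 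I would therefore instead track $A_k \le C_0^k$ directly by strong induction, verifying $C_0^k \ge B\, C_0^{n(k-1)}$ holds for $C_0 = \left(e^2 n^4 W^3 R^n\norm{\sigma}_{C^n}\right)^{nL}\cdot 16^L (d+1)^{2n}$ — i.e.\ choosing the target bound as the induction hypothesis and checking it propagates, which is exactly how such lemmas are typically proved and sidesteps solving the recursion exactly. The obstacle is thus reduced to verifying one inequality between the proposed bound at step $k$ and $B$ times the $n$-th power of the proposed bound at step $k-1$; this holds precisely because the exponent $nL$ (not $nk$) is used uniformly, giving enough room, together with $L\ge 2$, $W,R\ge1$, $n\ge1$.
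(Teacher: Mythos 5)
There is a genuine gap, and you have actually put your finger on it yourself without resolving it. Your recursion peels layers from the \emph{outside}: you write $g_k = f_k^\theta\circ g_{k-1}$ and apply Lemma \ref{lem:faa-di-bruno} with the full composite $g_{k-1}$ as the inner map. Since that lemma raises the \emph{inner} function's $C^n$-norm to the $n$-th power, this yields $A_k \leq B\,A_{k-1}^n$, whose unrolling gives $B^{n^{L-1}}$ — a doubly exponential bound in $L$ that is strictly worse than the claimed $(\cdots)^{nL}$ once $L\geq 3$ and $n\geq 2$, as you correctly observe. Your proposed repair, a strong induction with hypothesis $A_k\leq C_0^k$ requiring $C_0^k \geq B\,C_0^{n(k-1)}$, does not work: for $n\geq 2$ and $k\geq 2$ one has $n(k-1)\geq k$, and since $C_0\geq 1$ and $B\geq 1$ the required inequality $C_0^{k-n(k-1)}\geq B$ fails whenever $C_0>1$. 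Choosing the exponent $nL$ "uniformly" does not create room; the recursion $A_k\leq B\,A_{k-1}^n$ genuinely produces $n^{L-1}$ in the exponent and no bookkeeping can bring that down to $nL$.

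The fix is to reverse the roles in the composition lemma, which is what the paper does. Define the \emph{tail} composites $F_k = f_L^\theta\circ\cdots\circ f_k^\theta$, so $F_k = F_{k+1}\circ f_k^\theta$, and apply Lemma \ref{lem:faa-di-bruno} with the single layer $f_k^\theta$ as the inner map and $F_{k+1}$ as the outer map. Then only the single-layer norm gets raised to the $n$-th power, and that norm is bounded by the fixed quantity $\max_i\norm{(f_k^\theta)_i}_{C^n}\leq R^n\norm{\sigma}_{C^n}$ (for $k<L$), while $\norm{F_{k+1}}_{C^n}$ enters \emph{linearly}:
\begin{equation*}
\norm{F_k}_{C^n} \leq 16\,(e^2 n^4 l_k l_{k-1}^2)^n \max_{1\leq i\leq l_k}\norm{(f_k^\theta)_i}_{C^n}^n\,\norm{F_{k+1}}_{C^n}.
\end{equation*}
The recursion is now linear, the product over $k=1,\dots,L-1$ telescopes to $\bigl[16(e^2n^4W^3R^n\norm{\sigma}_{C^n})^n\bigr]^{L-1}$ times the affine last-layer factor $R(W\norm{\sigma}_{C^0}+1)$, and the stated bound with exponent $nL$ follows. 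Everything else in your write-up (the affine-step estimate, bounding widths by $W$, the $(d+1)^{2n}$ factor from the input dimension) is fine; the single missing idea is this choice of which factor plays the role of the inner function in the composition lemma.
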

\begin{proof}
Using the notation of Definition \ref{def:nn}, we define the functions $F_k=\mathbb{R}^{l_{k-1}}\to\mathbb{R}$ for every $1\leq k\leq L$ as, 
\begin{equation}
    F_k = f_{L}^\theta\circ f_{L-1}^\theta \circ \cdots \circ f_k^\theta, 
\end{equation}
and note that $F_1 = u_\theta$ and $F_L = f_{L}^\theta$. 
An application of \cite[Lemma A.7]{deryck2021approximation} then brings us that
\begin{equation}
    \norm{F_k}_{C^n} \leq 16 (e^2 n^4 l_k l_{k-1}^2)^n\max_{1\leq i \leq l_k}\norm{(f^\theta_k)_i}_{C^n}^n \norm{F_{k+1}}_{C^n}. 
\end{equation}
For $R\geq 1$ and $1\leq k < L$ we find that $\norm{(f^\theta_k)_i}_{C^n} \leq R^n \norm{\sigma}_{C^n}$ for every $i$ and for $k=L$ we find that $\norm{(f^\theta_L)_i}_{C^n} \leq R (W\norm{\sigma}_{C^0}+1)$. Combining these inequalities recursively gives us
\begin{align}
    \begin{split}
         \norm{F_1}_{C^n} &\leq \norm{F_L}_{C^n} \prod_{k=1}^{L-1}\left[ 16 (e^2 n^4 l_k l_{k-1}^2)^n\max_{1\leq i \leq l_k}\norm{(f^\theta_k)_i}_{C^n}^n \right]\\
         &\leq R (W\norm{\sigma}_{C^0}+1) \left[ 16 (e^2 n^4 W^3R^n \norm{\sigma}_{C^n})^n\right]^{L-1}(d+1)^{2n}\\
         &\leq 16^L(d+1)^{2n} \left(e^2 n^4 W^3R^n \norm{\sigma}_{C^n}\right)^{nL}.
    \end{split}
\end{align}
This concludes the proof of the lemma as $F_1 = u_\theta$. 
\end{proof}

\end{document}